\newtheorem {theorem}{Theorem}[section]
\newtheorem {lemma}[theorem]{{\bf Lemma}}
\newtheorem {corollary}[theorem]{{\bf Corollary}}
\newtheorem {prop}[theorem]{{\bf Proposition}}
\theoremstyle{remark}
\newtheorem {remark}{{\bf Remark}}[section]
\theoremstyle{problem}
\theoremstyle{definition}
\newtheorem {definition}{{\bf Definition}}[section]
\theoremstyle{plain} \numberwithin {equation}{section}
\begin{document}
\vspace{1cm}

\title[Oblique Injection of incompressible ideal fluid]{Oblique Injection of incompressible ideal fluid from a slot into a free stream$^*$}
\author[Jianfeng Cheng,\\Lili Du]{Jianfeng Cheng$^{1,2}$,\ \ Lili Du$^{1,\dag}$}

\thanks{$^*$Cheng is supported in part by NSFC grant 12001387 and the Fundamental Research Funds for the Central Universities No. YJ202046, Du is supported by NSFC grant 11971331.}
\thanks{ E-Mail: jianfengcheng@126.com (J. Cheng), E-mail: dulili@scu.edu.cn (L. Du).}
\thanks{$^\dag$Corresponding author.}
 \maketitle
\begin{center}

 $^1$ Department of Mathematics, Sichuan University,

          Chengdu 610064, P. R. China.

$^2$ The Institute of Mathematical Sciences,

The Chinese University of Hong Kong,

Shatin, N.T., Hong Kong.

\end{center}

\begin{abstract} This paper deals with a two-phase fluid free
boundary problem in a slot-film cooling. We give two well-posedness
results on the existence and uniqueness of the incompressible
inviscid two-phase fluid with a jump relation on free interface. The
problem formulates the oblique injection of an incompressible ideal
fluid from a slot into a free stream. From the mathematical point of
view, this work is motivated by the pioneer work \cite{FA3} by A.
Friedman, in which some well-posedness results are obtained in some
special case.
Furthermore, A. Friedman proposed an open problem in \cite{FA4} on
the existence and uniqueness of the injection flow problem for more
general case. The main results in
this paper solve the open problem and establish the well-posedness results on the physical problem. 

\end{abstract}

\

\begin{center}
\begin{minipage}{5.5in}
2010 Mathematics Subject Classification: 76B10; 76B03; 35Q31; 35J25.

\

Key words: Existence and uniqueness, free boundary, two-phase fluid,
contact discontinuity.
\end{minipage}
\end{center}

\

\everymath{\displaystyle}
\newcommand {\eqdef }{\ensuremath {\stackrel {\mathrm {\Delta}}{=}}}


\def\Xint #1{\mathchoice
{\XXint \displaystyle \textstyle {#1}} %
{\XXint \textstyle \scriptstyle {#1}} %
{\XXint \scriptstyle \scriptscriptstyle {#1}} %
{\XXint \scriptscriptstyle \scriptscriptstyle {#1}} %
\!\int}
\def\XXint #1#2#3{{\setbox 0=\hbox {$#1{#2#3}{\int }$}
\vcenter {\hbox {$#2#3$}}\kern -.5\wd 0}}
\def\ddashint {\Xint =}
\def\dashint {\Xint -}
\def\clockint {\Xint \circlearrowright } 
\def\counterint {\Xint \rotcirclearrowleft } 
\def\rotcirclearrowleft {\mathpalette {\RotLSymbol { -30}}\circlearrowleft }
\def\RotLSymbol #1#2#3{\rotatebox [ origin =c ]{#1}{$#2#3$}}

\def\aint{\dashint}

\def\arraystretch{2}
\def\eps{\varepsilon}

\def\s#1{\mathbb{#1}} 
\def\t#1{\tilde{#1}} 
\def\b#1{\overline{#1}}
\def\N{\mathcal{N}} 
\def\M{\mathcal{M}} 
\def\R{{\mathbb{R}}}
\def\B{{\mathcal{B}}}
\def\BB{\mathfrak{B}}
\def\F{{\mathcal{F}}}
\def\G{{\mathcal{G}}}
\def\ba{\begin{array}}
\def\ea{\end{array}}
\def\be{\begin{equation}}
\def\ee{\end{equation}}

\def\bes{\begin{mysubequations}}
\def\ees{\end{mysubequations}}

\def\cz#1{\|#1\|_{C^{0,\alpha}}}
\def\ca#1{\|#1\|_{C^{1,\alpha}}}
\def\cb#1{\|#1\|_{C^{2,\alpha}}}
\def\psir{\left|\frac{\nabla\psi}{r}\right|^2}
\def\lb#1{\|#1\|_{L^2}}
\def\ha#1{\|#1\|_{H^1}}
\def\hb#1{\|#1\|_{H^2}}
\def\th{\theta}
\def\Th{\Theta}
\def\cin{\subset\subset}
\def\Ld{\Lambda}
\def\ld{\lambda}
\def\ol{{\Omega_L}}
\def\sla{{S_L^-}}
\def\slb{{S_L^+}}
\def\e{\varepsilon}
\def\C{\mathbf{C}} 
\def\cl#1{\overline{#1}}
\def\ra{\rightarrow}
\def\xra{\xrightarrow}
\def\g{\nabla}
\def\a{\alpha}
\def\b{\beta}
\def\s{\sigma}
\def\d{\delta}
\def\th{\theta}
\def\fai{\varphi}
\def\O{\Omega}
\def\ol{{\Omega_L}}
\def\psirk{\left|\frac{\nabla\psi}{r+k}\right|^2}
\def\tO{\tilde{\Omega}}
\def\tu{\tilde{u}}
\def\tv{\tilde{v}}
\def\trho{\tilde{\rho}}
\def\W{\mathcal{W}}
\def\f{\frac}
\def\p{\partial}
\def\m{\omega}
\def\B{\mathcal{B}}
\def\H{\Theta}
\def\msS{\mathscr{S}}
\def\bq{\mathbf{q}}
\def\msE{\mathcal{E}}
\def\mfa{\mathfrak{a}}
\def\mfb{\mathfrak{b}}
\def\mfc{\mathfrak{c}}
\def\mfd{\mathfrak{d}}
\def\Div{\text{div}}
\def\Rot{\text{rot}}
\def\Curl{\text{curl}}
\def\mcL{\mathcal{L}}
\def\mcR{\mathcal{R}}
\def\f{\frac}
\def\p{\partial}
\def\o{\omega}
\def\h{_2^{\frac{1}{2}}}
\def\hh{_2^2}
\def\hhh{_2^{\frac{2}{3}}}
\def\k{_2^{\frac{3}{2}}}
\def\ii{\int_{0}^{t}\int}
\def\xiao{\leq}

\section{Introduction and main results}

\subsection{Introduction}

This paper is concerned with a two-phase free boundary problem
produced when a secondary fluid (or injected or coolant) is injected
obliquely at an angle from a slot into a cross flow fluid (see
Figure \ref{f1}). One important physical situation in which this
problem arises in fuel injectors, smokestacks, the cooling of
gas-turbine blades, and dilution holes in gas turbine combustors.
Please see the review of this physical problem \cite{MO}. Many
numerical simulations on this problem were investigated in
\cite{CHO,MM,SM} and the references therein.

\begin{figure}[!h]
\includegraphics[width=100mm]{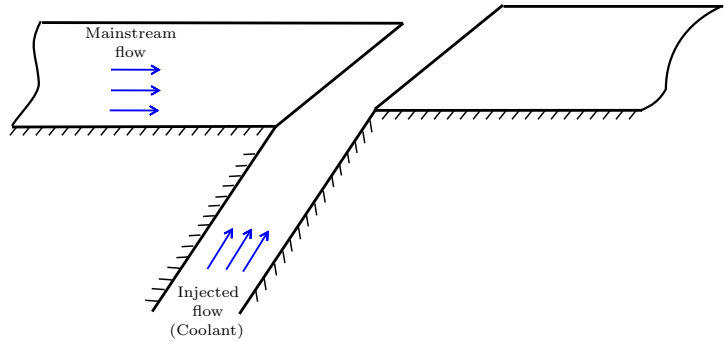}
\caption{Injection of flow from a slot into a cross flow}\label{f1}
\end{figure}

\begin{figure}[!h]
\includegraphics[width=100mm]{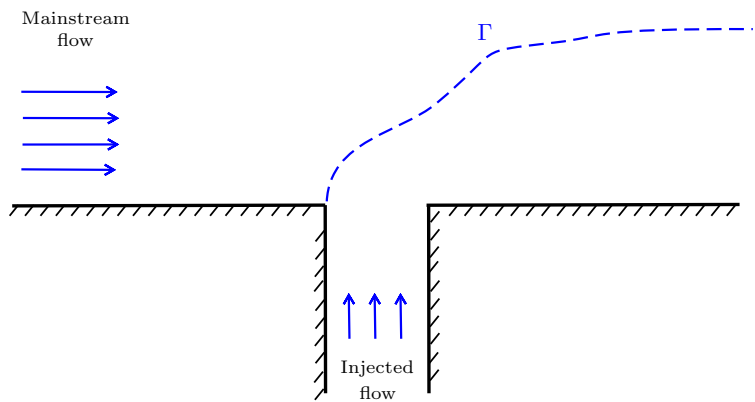}
\caption{Horizontal blade surfaces}\label{f2}
\end{figure}

Mathematically, the motivation to investigate this free boundary
problem follows from the work \cite{FA3} by A. Friedman. He first
considered the two-dimensional model and the simple situation of
horizontal blade surface and the secondary fluid injected
perpendicularly into a free stream in two dimensions (as in Figure
\ref{f2}). Here, for simplicity, we neglect the separation at the
trailing edge $B$ of the slot, such separation can be minimized in
practice by slightly around the trailing edge. Also, we have assumed
that the interface between the main stream flow and the secondary
flow separates at the leading edge $A$, since the viscosity effects
are ignored. Some existence and uniqueness of the solution to the
two-phase fluid were established for simple special case in
\cite{FA3}. And furthermore, A. Friedman proposed an open problem in
Page 69 in his survey \cite{FA4}, that
$$\ba{rl}\text{{\it Problem (1)}.} &\text{{\it Extend the results of Theorem 9.1, 9.2 to more general
geometries,}}\\
&\text{{\it such as in Figure 9.3}}.\ea$$

\begin{figure}[!h]
\includegraphics[width=100mm]{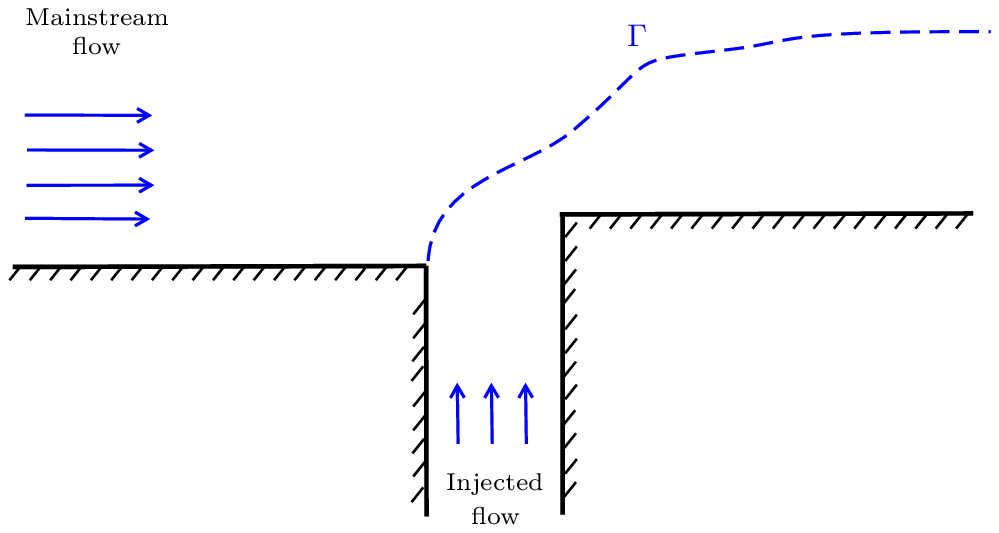}
\caption{Figure 9.3 in \cite{FA4}}\label{f3}
\end{figure}

Please see Figure \ref{f3} for the Figure 9.3 in \cite{FA4}.

The main purpose in this paper is to establish the existence and
uniqueness of the free boundary problem on an incompressible
inviscid fluid obliquely into a free stream (as in Figure \ref{f4})
and solve the open problem proposed by A. Friedman.

\begin{figure}[!h]
\includegraphics[width=100mm]{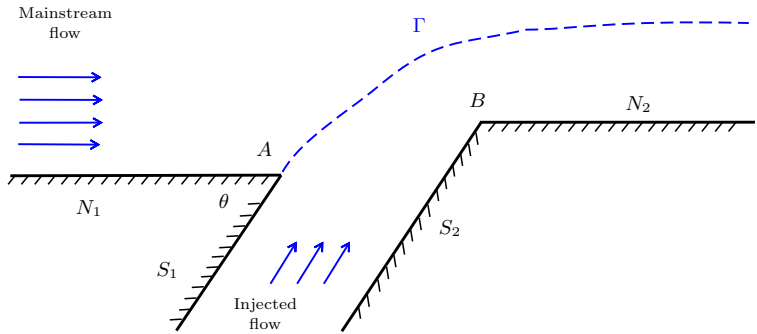}
\caption{Oblique injection of flow into a free stream}\label{f4}
\end{figure}

In general, there is a discontinuity in the magnitude of velocity
across the interface due to the Bernoulli's law. Therefore, the
standard method of conformal mapping from the complex potential
plane to the conjugate velocity plane will not be fruitful because
the interface is mapping into unknown curve in the conjugate
velocity plane. For the special case of $\th=\f\pi2$, the free
boundary problem was reduced to a nonlinear singular integral
differential equation in \cite{TL}. Along the variational arguments
introduced in \cite{ACF3,ACF4,ACF5,ACF6}, A. Friedman established
the well-posedness results for the some special case ($b=0$ and
$\th=\f\pi2$) in Figure \ref{f2}.

\subsection{Notations and the free boundary problem} Before we state
the main results in this paper, we will give the following notations
of the geometry of the blade surface.

 Denote $$N_1=\{(x,0)\mid x\leq0\},\ \ N_2=\{(x,b)\mid x\geq a\}.$$ Here,
 $a>0$, and we consider the general case that the blade surfaces are
 not horizontal, namely, $b>0$. Let
 $$S_1=\{(x,y)\mid x=y\cot\th, y\leq0\}\ \ \text{and}\ \ S_2=\{(x,y)\mid x=(y-b)\cot\th+a,
 y\leq b\},$$ where $\th\in\left(0,\f\pi2\right]$. Furthermore, we assume $a\sin\th-b\cos\th>0$, which excludes the
 possibility of the intersection of $N_1$ and $S_2$, $\th$ is the inclination and the critical
 case $\th=\f\pi2$ means the normal injection. The leading edge
 $A=(0,0)$ and the trailing edge $B=(a,b)$.

Both of the mainstream flow and the secondary flow are assumed to be
steady, incompressible, inviscid and irrotational. Denote by
$(u_+,v_+), p_+,\rho_+$ the velocity, pressure and the constant
density of the mainstream flow in $\O^+$, and $(u_-,v_-),
p_-,\rho_-$ as the velocity, pressure and the constant density of
the secondary flow in $\O^-$. They are separated by a streamline,
denoted as $\Gamma$. The pressure across the interface $\Gamma$ has
to be continuous, i.e., $p_+=p_-$ on $\Gamma$. We assume that the
mainstream flow is horizontal and possesses a uniform speed $U_0$ in
upstream, without loss of generality, $U_0=\f1{\sqrt{\rho_+}}$. The
secondary flow with mass flux $Q_0$ emerges from a slot, where the
magnitude of $Q_0$ is unrestricted  for the moment.


Denote by $\O$ the fluid field of the two-phase fluid, composed of
the following domains
$$\text{$T_1=\{x\leq 0,
y\geq0\}$, $T_2=\{y\cot\th\leq x\leq0,y\leq 0\}$,}$$ and
$$T_3=\{0\leq
x\leq\min\{(y-b)\cot\th+a,a\}\}\ \ \text{and}\ \ T_4=\{x\geq a,y\geq
b\},$$ namely, $\O=\text{int}(T_1\cup T_2\cup T_3\cup T_4)$.

Define a stream function $\psi$ of the two-phase fluid as
$$\f{\p\psi}{\p x}=-\sqrt{\rho_+}v_+\ \ \text{and}\ \ \f{\p\psi}{\p y}=\sqrt{\rho_+}u_+\ \ \text{in the main
fluid field  $\O^+$,}$$ and
$$\f{\p\psi}{\p x}=-\sqrt{\rho_-}v_-\ \ \text{and}\ \ \f{\p\psi}{\p y}=\sqrt{\rho_-}u_-\ \ \text{in the
secondary fluid field  $\O^-$}.$$

On the solid boundaries, we impose that \be\label{a0}\text{$\psi=0$
on $N_1\cup S_1$, and $\psi=-\f{Q_0}{\sqrt{\rho_-}}$ on $N_2\cup S_2$}.\ee On the
interface $\Gamma=\O\cap\{\psi=0\}$, the Bernoulli's equation gives
that
\be\label{a01}\rho_-(u_-^2+v_-^2)-\rho_+(u_+^2+v_+^2)=\text{constant,}\
\ \ \text{on}\ \ \Gamma,\ee the jump constant is denoted as $\ld$.
It is easy to see that $\ld\in(-1,+\infty)$. The two-phase fluid we
seek in this paper is the vortex sheet solution and the jump
condition \eqref{a01} is in fact the Rankine-Hugoniot jump condition
to the vortex sheet. From the mathematical point of view, to attack
the well-posedness of the problem on the injection of ideal fluid
from a slot into a free stream in 1983, A. Friedman in \cite{FA3}
(see also the Chapter 9 in \cite{FA4}) introduced the injection flow
problems in two different situations.

{\bf The injection flow problem 1.} For any given $Q_0>0$, does there
exist a unique injection flow $(\psi,\Gamma)$, such that the
mainstream flow possesses uniform speed in upstream, and the
interface $\Gamma$ connects at $A$ and extends to infinity?

{\bf The injection flow problem 2.} For any given
$\ld\in(-1,+\infty)$, does there exist a unique injection flow
$(\psi,\Gamma)$, such that the mainstream flow possesses uniform
speed in upstream, and the interface $\Gamma$ connects at $A$ and
extends to infinity?

Here, it is worth to mention that once the stream function $\psi$ is solved,
$$(u_+,v_+)=\f1{\sqrt{\rho_+}}\left(\f{\p\psi}{\p y},-\f{\p\psi}{\p x}\right)\ \ \text{in the main
fluid field  $\O^+$,}$$ and
$$(u_-,v_-)=\f1{\sqrt{\rho_-}}\left(\f{\p\psi}{\p y},-\f{\p\psi}{\p x}\right)\ \ \text{in the
secondary fluid field  $\O^-$}$$ will be solved by the stream
function.

Meanwhile, the existence and uniqueness of the injection flow
problem 1 and 2 in some geometric special situation were established
in Theorem 3.2 and Theorem 1.1 in \cite{FA3}, respectively. He
assumed that the blade surface is horizontal and the injection is
vertical, namely, $b=0$ and $\th=\f\pi2$ (see Figure \ref{f2}).
Moreover, he proposed an open problem in \cite{FA4} to extend the
results in \cite{FA3} to more general case as in Figure \ref{f3}.
This is the main motivation to investigate the oblique injection
flow problem in this paper.

Next, we will define the solution to the injection flow problem 1
and problem 2, respectively.

\begin{definition}\label{def1}
{\bf(A solution to the injection flow problem 1).}\\
For any given $Q_0>0$, a vector $(\psi,\Gamma)$ is called a solution
to the injection flow
problem 1, provided that \\
(1) $\Delta\psi=0$ in $\O\setminus \Gamma$, $\psi\in C^0(\bar\O)$
and $\g\psi\in
L^{\infty}(\O\setminus B_\e(B))$ for any $\e>0$.\\
(2) $\psi$ satisfies the Dirichlet boundary conditions \eqref{a0}. \\
 (3) The free
boundary $\Gamma: y=k(x)$ is $C^1$-smooth
 strictly increasing function in $(0,+\infty)$, and $k(x)>b$
 for any $x\geq a$. Furthermore,
\be\label{a2}k(0)=0,\ee and there exists a $h\in(b,+\infty)$, such that $$
\lim_{x\rightarrow+\infty}k(x)=h\ \ \text{and}\ \
\lim_{x\rightarrow+\infty}k'(x)=0.$$ \\
(4) $\psi$ satisfies the Rankine-Hugoniot jump condition on
$\Gamma$, namely,
\be\label{a00}\left(\f{\p\psi^-}{\p\nu}\right)^2-\left(\f{\p\psi^+}{\p\nu}\right)^2=\ld
~~\text{on}\ \Gamma,\ee where $\ld=\f1{\rho_-}\left(\f{Q_0}{h-b}\right)^2-1$,
$\psi^+=\max\{\psi,0\}$, $\psi^-=-\min\{\psi,0\}$ and
$\nu$ is the normal vector to $\Gamma$.\\
(5) $\Gamma$ is continuously differentiable at $A$ and
\be\label{a3}k'(0+0)=\left\{\ba{ll} \tan\th,\
&\text{if $\ld>0$, (see Figure \ref{f4})}\\
0 ,\
&\text{if $\ld<0$, (see Figure \ref{f5})}\\
\tan\f{\th}2,\ &\text{if\ $\ld=0$, (see Figure
\ref{f6})}.\ea\right.\ee

\begin{figure}[!h]
\includegraphics[width=90mm]{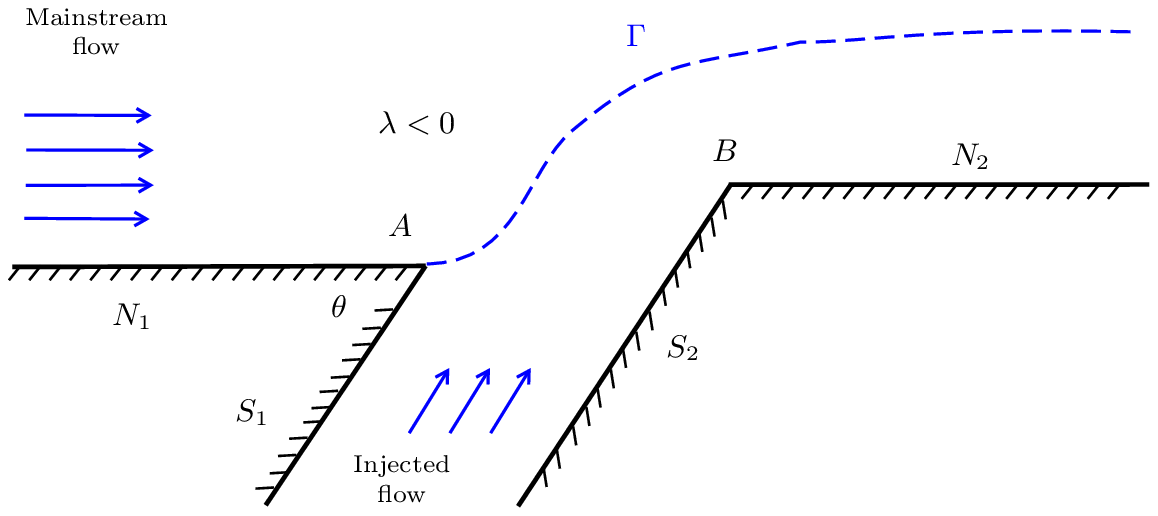}
\caption{The case $\lambda < 0$.}\label{f5}
\end{figure}

\begin{figure}[!h]
\includegraphics[width=90mm]{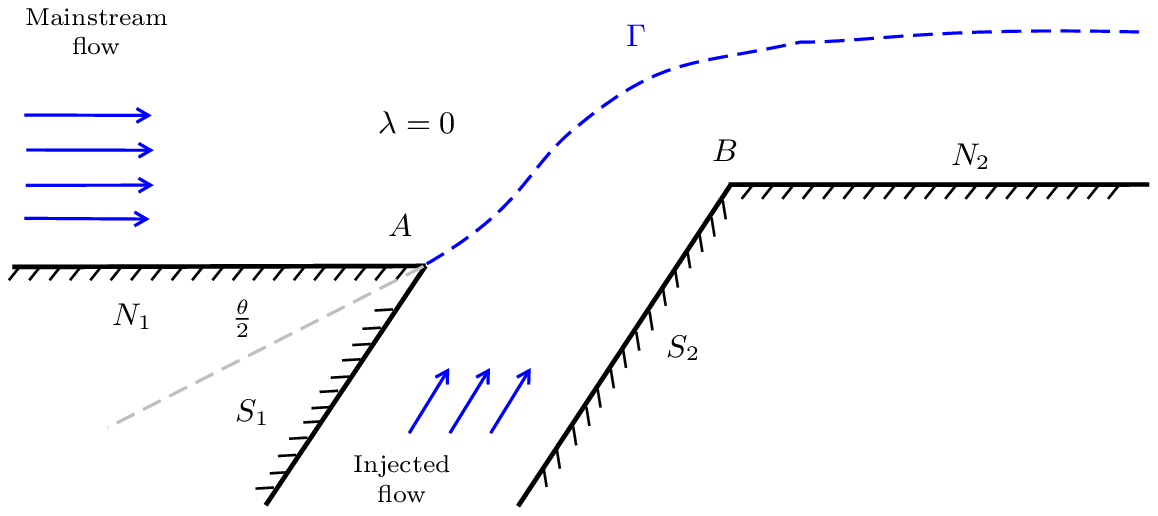}
\caption{The case $\lambda = 0$}\label{f6}
\end{figure}

(6) $\psi$ possesses the
following asymptotic behaviors in far field
$$\psi(x,y)\rightarrow\left\{\ba{ll} \f{Q_0(y-h)}{\sqrt{\rho_-}(h-b)},\
&\text{if $b<y<h$, as $x\rightarrow+\infty$},\\
y-h,\ &\text{if $h<y<C$, as $x\rightarrow+\infty$, for any
$C>0$},\ea\right.$$
$$\psi(x,y)\rightarrow y,\ \text{if $0<y<C$, as $x\rightarrow-\infty$, for any
$C>0$},$$ and
$$\left|\psi(x,y)-\f{Q_0(y\cos\th-x\sin\th)}{\sqrt{\rho_-}(a\sin\th-b\cos\th)}\right|\ra
0 \ \ \text{uniformly in any compact subset of $S$,}$$ as
$y\ra-\infty$, where $S=\{(x,y)\mid
y\cot\th<x<(y-b)\cot\th+a,-\infty<y<+\infty\}$. Furthermore,
$$\g\psi(x,y)\rightarrow (0,1),\ \text{if $x^2+y^2\rightarrow+\infty$, dist($(x,y),\Gamma$)$\rightarrow+\infty$ and with $\psi(x,y)>0$},$$ and
$$\left|\g\psi(x,y)-\f{Q_0}{\sqrt{\rho_-}(a\sin\th-b\cos\th)}\left(-\sin\th,\cos\th\right)\right|\ra
0 \ \ \text{uniformly in any compact subset of $S$,}$$ as
$y\ra-\infty$.\\ 
(7) The following estimates hold,
\be\label{a4}\text{$-h\leq\psi^+(x,y)-y\leq 0\ $ in $\
\O\cap\{y>0\}$.}\ee

\end{definition}

\begin{definition}\label{def11}
{\bf(A solution to the injection flow problem 2).} For some given
appropriate $\ld\in(-1,+\infty)$, $(\psi,\Gamma)$ is called a
solution to the injection flow problem 2, provided that the
conditions (1) - (7) in Definition \ref{def1} hold.

 \end{definition}

\begin{remark} $k(0)=0$ is nothing but the {\it continuous fit
condition} of the interface $\Gamma$, which gives that the interface
$\Gamma$ initiates at the leading edge $A$. Since the viscous
effects are ignored here, and the boundary layer is not considered,
the continuous fit condition seems to be reasonable. Moreover, the
condition \eqref{a3} is so-called {\it smooth fit condition} for
$\ld\neq0$ (please see Figure \ref{f4} and Figure \ref{f6}). 
\end{remark}

\begin{remark}
The conditions $\lim_{x\rightarrow+\infty}k(x)=h<+\infty$ and
$\lim_{x\rightarrow+\infty}k'(x)=0$ in \eqref{a2} imply that the
interface $\Gamma$ is flat and does not oscillate in downstream.
\end{remark}

\begin{remark}
To attack the injection flow problem 1, we first regard the constant
$\ld$ as an undetermined parameter, and then the parameter $\ld$
will be determined uniquely by the continuous fit condition. It
means that there exists a unique $\ld$ such that the interface
connects at the leading edge point $A$. On another hand, the
asymptotic behavior in downstream gives the relation
$h=b+\f{Q_0}{\sqrt{\rho_-(\ld+1)}}$. Once the constant $\ld$ is fixed by the
continuous fit condition, the asymptotic width $h$ can be determined
by the formula.
\end{remark}

\subsection{Main results}

For the special case $b=0$ and $\th=\f\pi2$, the existence and
uniqueness were established in \cite{FA3}, and we will give the
existence and uniqueness results on the injection flow problem in
two situations in general case as follows.
\begin{theorem}\label{th1} For any $Q_0>0$, there exist a unique $\ld>-1$ and a unique solution
$(\psi,\Gamma)$ to the injection flow problem 1. Furthermore, the
interface $\Gamma$ is analytic, $u_{\pm}>0$ in $\O^{\pm}\cup\Gamma$,
and $v_{\pm}>0$ in $\O^{\pm}\cup\Gamma$.
\end{theorem}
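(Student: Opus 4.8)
The plan is to solve the problem by the variational method of Alt--Caffarelli--Friedman developed in \cite{ACF3,ACF4,ACF5,ACF6}, treating the jump constant $\ld$ as a free parameter that is pinned down only at the very end by the continuous fit condition $k(0)=0$. First I would fix $\ld\in(-1,+\infty)$ and truncate the unbounded domain $\O$ at $|x|=L$ and $y=-L$ to obtain a bounded Lipschitz domain $\ol$. On $\ol$ I would introduce the admissible class of $H^1$ functions carrying the Dirichlet data \eqref{a0} on the solid walls together with data on the artificial far-field boundaries dictated by the target asymptotics in item (6), and minimize the functional
\be
J_\ld(\psi)=\int_{\ol}\left(|\g\psi|^2+\ld\,\chi_{\{\psi>0\}}\right)\,dx,
\ee
whose free-boundary Euler--Lagrange equation is exactly the jump relation \eqref{a00}. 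The direct method (coercivity from the gradient term, boundedness of $\chi_{\{\psi>0\}}$, and weak lower semicontinuity) yields a minimizer $\psi_L$, harmonic in $\{\psi_L>0\}$ and in $\{\psi_L<0\}$; by the ACF regularity theory it is Lipschitz with free boundary $\{\psi_L=0\}$ locally a $C^{1,\a}$ graph satisfying \eqref{a00}, and analyticity of $\Gamma$ then follows from the classical regularity of two-phase Bernoulli free boundaries (hodograph--Legendre transform) once it is known to be $C^{1,\a}$.

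Next I would establish the geometric and monotonicity properties. Using the explicit shape of the walls, comparison with the harmonic far-field profiles, and the maximum principle, I would show that the free boundary is a graph $y=k(x)$, that $k$ is strictly increasing, and that $k(x)>b$ for $x\ge a$. The same sliding and moving-plane arguments, combined with Hopf's lemma on the walls, give the strict positivity of the velocity components, since $u_\pm$ and $v_\pm$ are, up to the normalizing constants, the derivatives $\p_y\psi$ and $-\p_x\psi$, so that $u_\pm>0$ and $v_\pm>0$ in $\O^\pm\cup\Gamma$ amount to $\p_y\psi>0$ and $\p_x\psi<0$. The local behaviour at the leading edge $A$, i.e. the smooth fit condition \eqref{a3}, would be read off from the blow-up limit of $\psi$ at $A$: for $\ld>0$ the interface leaves tangentially to $S_1$ with slope $\tan\th$, for $\ld<0$ it leaves horizontally, and in the borderline case $\ld=0$ the two-phase solution behaves like the harmonic function bisecting the corner, giving slope $\tan\f{\th}2$.

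The crux of the argument, and the step I expect to be hardest, is the determination of $\ld$ through the continuous fit condition. For each fixed $\ld$ the free boundary detaches from the wall at a point $A(\ld)$; I would prove that $A(\ld)$ depends continuously and monotonically on $\ld$ (monotonicity by comparison of the minimizers for different parameters, continuity by stability of minimizers together with nondegeneracy of the free boundary), and that the two extreme regimes force the detachment point to lie on opposite sides of the leading edge. An intermediate value argument then produces a unique $\ld$ with $A(\ld)=A=(0,0)$, which is exactly the continuous fit $k(0)=0$, while the downstream asymptotics give $h=b+\f{Q_0}{\sqrt{\rho_-(\ld+1)}}$ and convert this into the solvability statement for the prescribed $Q_0$.

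Finally, passing to the limit $L\to+\infty$ through uniform Lipschitz and nondegeneracy bounds yields a solution on the full domain $\O$, with the far-field behaviour in item (6) obtained by constructing the explicit harmonic barriers appearing there and invoking the strong maximum principle, and the estimate \eqref{a4} following from comparison with the profile $y$. Uniqueness would then follow from the uniqueness of the tuned parameter $\ld$ together with a comparison argument showing that any two solutions sharing the same $\ld$ must coincide. The principal obstacles I anticipate are the delicate monotonicity and continuity of the detachment point $A(\ld)$ in $\ld$, and the local analysis at the corner $A$ producing the three distinct slopes in \eqref{a3}; the behaviour of the vortex-sheet interface at the leading edge, where the Dirichlet walls $N_1$ and $S_1$ meet, is where the two-phase nature of the problem makes the standard one-phase machinery insufficient.
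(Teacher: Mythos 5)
Your architecture is the one the paper uses: ACF-type minimization in a truncated domain with $\ld$ as a free parameter, continuous fit at $A$ to select $\ld$ (the paper imports exactly your intermediate-value argument on the detachment point from Lemmas 2.4--2.5 of \cite{FA3}), blow-up at $A$ for the three slopes in \eqref{a3}, a partial hodograph transform plus bootstrap for analyticity, Hopf's lemma for the positivity of $(u_\pm,v_\pm)$, and a comparison argument for uniqueness. Two corrections on the variational setup, though. The free boundary condition generated by $\int(|\g\psi|^2+\ld\, I_{\{\psi>0\}})$ is $\left(\f{\p\psi^+}{\p\nu}\right)^2-\left(\f{\p\psi^-}{\p\nu}\right)^2=\ld$, the \emph{negative} of \eqref{a00}, so the coefficient must be $-\ld$; and since the continuous-fit value of $\ld$ can have either sign, you cannot avoid the case of a negative coefficient, for which $\int c\,I_{\{\psi>0\}}$ fails to be weakly lower semicontinuous (the failure occurs on the zero set). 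This is why the paper minimizes $\int_{\O_{L,\mu}}\left|\g\psi-(\ld_{1,L}I_{\{\psi<0\}}+\ld_{2,L}I_{\{\psi>0\}}+\ld_{0,L}I_{\{\psi=0\}})I_{\{x>0\}}e\right|^2$ with positive weights and $\ld_{0,L}=\min\{\ld_{1,L},\ld_{2,L}\}$, and identifies it with your functional only locally, in the blow-up at $A$, where the cross terms agree for all competitors. Note also that $\O$ is unbounded in the $+y$ direction, so an upper truncation $y<L$ is required as well; it is not cosmetic, since it is what couples the parameter to the asymptotic height through $\ld_L=\f{Q^2}{(h_L-b)^2}-\f{L^2}{(L-h_L)^2}$.

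The substantive omission is the assertion $\ld>-1$, equivalently a bound on the asymptotic height $h_L$ that is uniform as $L\ra+\infty$. For $b=0$ this is immediate because the degenerate flow $Q_0=0$ is the uniform stream, but for $b>0$ there is no such trivial limit, and this is one of the two difficulties the paper identifies as genuinely new; it is settled in Lemma \ref{lc1} by comparing $\psi$ with $\max\{y,0\}$, upgrading the comparison to $(1+\delta)\psi\leq\max\{y,0\}$ near $A$ via Hopf's lemma on $N_1$, and blowing up at $A$ to obtain $(1+\delta)\sqrt{-\ld}\leq 1$, which contradicts $\ld\leq-1$. Without this step your formula $h=b+\f{Q_0}{\sqrt{\rho_-(\ld+1)}}$ is not available. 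Finally, the uniqueness mechanism cannot be the rescaling used in the $b=0$ case (the domain is no longer star-shaped about $B$): the paper slides $\psi(x,y-\e)$ to the smallest $\e_0\geq0$ for which it lies below the competitor, uses the Phragm\'en--Lindel\"of theorem of \cite{G1} together with \eqref{a4} to obtain this ordering on the unbounded domain, and then applies Hopf's lemma at a finite touching point of the two analytic free boundaries (or, when $\e_0=0$, compares the blow-up slopes at $A$) to contradict \eqref{a00}; the same device proves both that two solutions share the same $\ld$ and that they coincide, so your two-step plan is viable but needs to be carried out by this argument.
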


\begin{remark} In \cite{ACF5}, some well-posedness results on two
fluids of steady incompressible inviscid flows issuing from two
nozzles were established (see Figure \ref{f7}). However, it is
assumed that the two nozzles are symmetric with respect to $x$-axis
and the upper boundary of the nozzle I coincides with the lower
boundary of the nozzle II. Along the proof of Theorem \ref{th1} in
this paper, we can extend the existence and uniqueness of the two
fluids in \cite{ACF5} to the general case as Figure \ref{f8} (the
nozzle are asymmetric and the nozzle walls do not coincide) without
any additional difficulties.

\begin{figure}[!h]
\includegraphics[width=80mm]{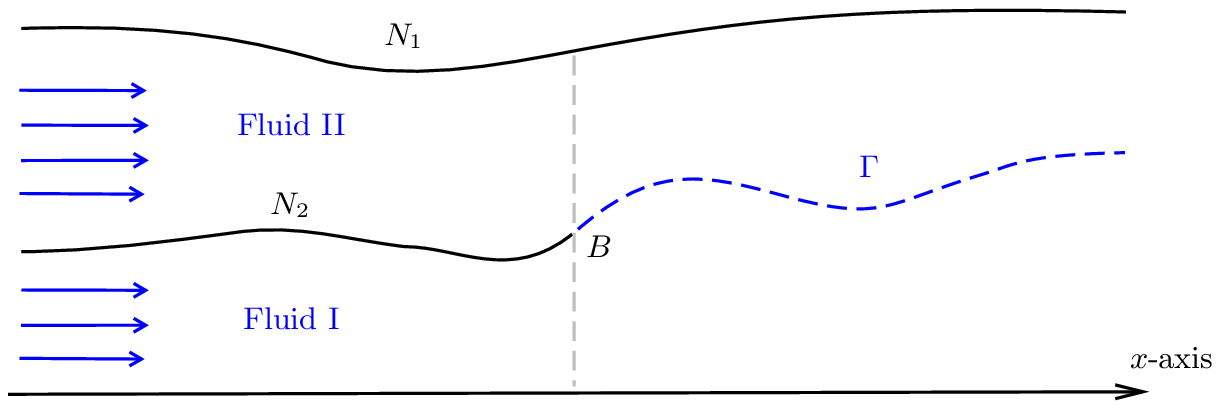}
\caption{Two-phase fluid}\label{f7}
\end{figure}

\begin{figure}[!h]
\includegraphics[width=80mm]{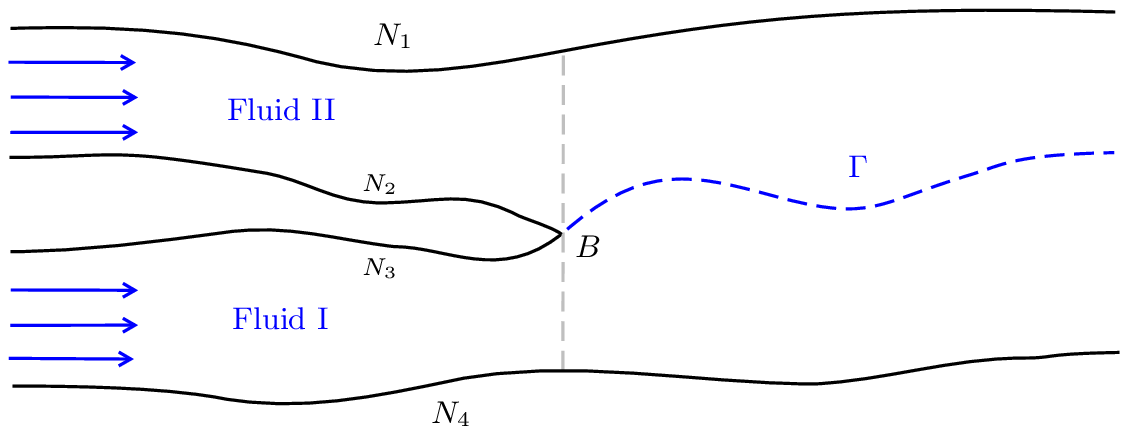}
\caption{Two-phase fluid}\label{f8}
\end{figure}

\end{remark}

\begin{remark} In the previous work \cite{FA3}, A. Friedman showed that the free boundary $\Gamma$ is
only $C^1$-smooth for $b=0$ and $\th=\f\pi2$, and then the
Rankine-Hugoniot \eqref{a00} holds in weak sense. However, we would
like to emphasize that here we showed that the free boundary is
analytic and then the Rankine-Hugoniot \eqref{a00} holds in
classical sense.

\end{remark}

On another hand, to obtain the well-posedness results on the
injection flow problem 2, we will investigate the relationship of
the constant $\ld$ and the flux of injection flow $Q_0>0$. In fact, we
show that $\ld$ is strictly monotone increasing and continuous with
respect to $Q_0>0$, denoted as $\ld=\ld(Q_0)$.

\begin{theorem}\label{th2} For any $Q_0>0$, the solution $(\psi,\Gamma,\ld(Q_0))$ established in Theorem \ref{th1} satisfies that\\
(1) $\ld(Q_0)$ is strictly monotone increasing and continuous with respect to
$Q_0>0$.\\
(2) There exists a $\underline\ld\in(-1,0)$, such that
$\ld(Q_0)\rightarrow
\underline\ld$ as $Q_0\rightarrow0$.\\
(3) There exists a $\kappa\in(0,+\infty)$, such that
$\f{\ld(Q_0)}{Q_0^2}\rightarrow \kappa$ as $Q\rightarrow+\infty$.

\end{theorem}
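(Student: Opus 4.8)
The plan is to reduce all three assertions to the behaviour of a single geometric quantity, the downstream secondary speed, using the identity recorded after Definition \ref{def11}: for the solution furnished by Theorem \ref{th1} one has $h(Q_0)=b+\f{Q_0}{\sqrt{\rho_-(\ld(Q_0)+1)}}$, equivalently
\be\label{pf2rel}\ld(Q_0)+1=\f1{\rho_-}\left(\f{Q_0}{h(Q_0)-b}\right)^2.\ee
Thus $\ld+1$ is exactly the square of the asymptotic secondary speed, and the three claims become statements about the asymptotic width $h(Q_0)-b$ and the ratio $\f{Q_0}{\sqrt{\rho_-}(h(Q_0)-b)}$ as functions of $Q_0$. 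Three tools carry the argument: a comparison principle for the underlying variational problem (for monotonicity), a compactness-plus-uniqueness scheme (for continuity), and uniform-in-$Q_0$ barrier and nondegeneracy estimates together with the identification of two limiting free boundary problems (for the limits).

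For part (1) I would first treat monotonicity. Given $0<Q_0^{(1)}<Q_0^{(2)}$ with solutions $(\psi_1,\Gamma_1,\ld_1)$ and $(\psi_2,\Gamma_2,\ld_2)$, the only change in the data is the more negative value $-\f{Q_0^{(2)}}{\sqrt{\rho_-}}<-\f{Q_0^{(1)}}{\sqrt{\rho_-}}$ imposed on $N_2\cup S_2$, so a maximum-principle comparison gives $\psi_2\le\psi_1$ in $\O$; then $\{\psi_2>0\}\subset\{\psi_1>0\}$, the interface $\Gamma_2$ lies above $\Gamma_1$, and the strong maximum principle with the Hopf lemma upgrade this to strict ordering and to $h_1<h_2$. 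The delicate step is to turn the ordering of interfaces into $\ld_1<\ld_2$: since $\ld$ is a single constant along the whole interface (by \eqref{a00}), it suffices to compare the two downstream secondary speeds, which I would extract from a Hopf-type boundary comparison across the ordered interfaces. For continuity, fix $Q_0$ and let $Q_0^{(n)}\to Q_0$; the uniform bound $\g\psi\in L^\infty(\O\setminus B_\e(B))$, the nondegeneracy, and the analytic regularity of the interface from Theorem \ref{th1} give a subsequence with $\psi_n\to\psi_*$ locally uniformly, $\Gamma_n\to\Gamma_*$, and $\ld(Q_0^{(n)})\to\ld_*$. One checks that $(\psi_*,\Gamma_*,\ld_*)$ satisfies (1)--(7) of Definition \ref{def1} for the flux $Q_0$ (the continuous fit $k(0)=0$ being stable under this convergence), so uniqueness forces $\ld_*=\ld(Q_0)$; as the subsequence was arbitrary, $\ld(Q_0^{(n)})\to\ld(Q_0)$.

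For parts (2) and (3) I would pass to the limit in the one-parameter family. As $Q_0\to+\infty$ the natural object is $\phi:=\psi/Q_0$: it is harmonic off its interface, takes the values $0$ and $-1/\sqrt{\rho_-}$ on the walls, its mainstream gradient tends to $0$, and \eqref{a00} becomes $|\g\phi^-|^2-|\g\phi^+|^2=\ld/Q_0^2$. Hence in the limit $\phi$ solves a classical \emph{one-phase} Bernoulli free-jet problem for the secondary fluid alone, with free-boundary speed $\sqrt\kappa$ and continuous fit at $A$; such a jet has a unique solution of finite positive asymptotic width $h_\infty-b=1/\sqrt{\rho_-\kappa}$, and reading \eqref{pf2rel} along the converging family yields $\ld(Q_0)/Q_0^2\to\kappa\in(0,+\infty)$. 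As $Q_0\to0$ one keeps $\psi$ unscaled: the mainstream converges to a one-phase flow past the blade while the secondary layer collapses, $h-b\to0$, and \eqref{pf2rel} identifies $\underline\ld+1$ with the square of the nonvanishing limiting secondary speed; monotonicity already provides the decreasing limit $\underline\ld\ge-1$, while identifying the limiting configuration gives the strict bounds $\underline\ld\in(-1,0)$.

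The main obstacle is the package of uniform-in-$Q_0$ estimates required to justify both limit passages and to secure the strict inequalities. Concretely, I expect the hardest points to be (i) an upper barrier $h(Q_0)-b\le C$ valid for all large $Q_0$, so that the asymptotic width cannot blow up and $\kappa$ stays positive, and (ii) a uniform nondegeneracy lower bound on the downstream secondary speed as $Q_0\to0$, preventing $\ld$ from collapsing to $-1$, together with the strict bound $\underline\ld<0$ that keeps the limiting secondary speed below the unit mainstream speed. Both demand barrier comparisons tailored to the slot geometry and to the continuous-fit condition \eqref{a2}, precisely because the free boundary degenerates in these two regimes.
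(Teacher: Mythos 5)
The overall architecture you propose --- reduce everything to the relation $\ld+1=\rho_-^{-1}\left(\f{Q_0}{h-b}\right)^2$, prove continuity by compactness plus uniqueness, and identify the two limiting one-phase free boundary problems as $Q_0\to0$ and $Q_0\to+\infty$ --- is the paper's architecture, and your treatment of parts (2) and (3), including the two uniform estimates you flag as the hard points, matches what the paper actually does (nondegeneracy gives $|\ld(Q)|\le CQ^2$, a gradient bound plus unique continuation gives $\kappa>0$, and $(1+\delta)$-barriers at $A$ exclude $\underline\ld\le-1$).

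The genuine gap is in the monotonicity step of part (1). You assert that, since the only change in the data is a more negative Dirichlet value on $N_2\cup S_2$, ``a maximum-principle comparison gives $\psi_2\le\psi_1$ in $\O$.'' This does not work: $\psi_1$ and $\psi_2$ are harmonic only off their respective interfaces $\Gamma_1\neq\Gamma_2$, and across each interface the normal derivative jumps by an amount governed by the unknown constants $\ld_1,\ld_2$. The difference $\psi_1-\psi_2$ therefore carries singular measures supported on $\Gamma_1\cup\Gamma_2$ of uncontrolled sign, and no maximum principle applies in all of $\O$. Moreover, both interfaces emanate from the same point $A$ by the continuous fit condition, so they cannot be strictly ordered near $A$; and the step you yourself call ``delicate'' --- converting an ordering of interfaces into $\ld_1<\ld_2$ --- is exactly where the work lies and is left unspecified. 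The paper instead argues by contradiction with a sliding method: assuming $\ld(Q_1)\le\ld(Q_2)$ with $Q_1>Q_2$, the downstream asymptotics force the interfaces to be ordered far downstream; one translates $\psi_{1,\e}(x,y)=\psi_1(x,y-\e)$, takes the smallest $\e_0\ge0$ for which $\psi_{1,\e_0}\le\psi_2$ (controlling infinity via the asymptotics and the Phragm\'en--Lindel\"of theorem), and at a finite contact point applies Hopf's lemma separately on the positive and negative phases to get the strict inequality $|\g\psi_{1,\e_0}^-|^2-|\g\psi_{1,\e_0}^+|^2>|\g\psi_2^-|^2-|\g\psi_2^+|^2$, i.e.\ $\ld(Q_1)>\ld(Q_2)$, a contradiction with \eqref{a00}. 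The case $\e_0=0$ (contact only at $A$) needs a separate blow-up analysis at the leading edge, using the linear blow-up limits $\max\{\sqrt{-\ld}\,\t y,0\}$, the smooth-fit values of $k'(0+0)$ and the preliminary claim $\ld(Q_1)\cdot\ld(Q_2)>0$; the degenerate case $\ld(Q_1)=\ld(Q_2)=0$ is settled by an explicit conformal-mapping identity that determines $Q$ uniquely. None of these mechanisms appears in your sketch, so as written the monotonicity claim is unsupported.
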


The second statement in Theorem \ref{th2} implies that the lower
bound is $\underline\ld$, so we can establish the well-posedness
result to the injection flow problem 2.

\begin{theorem}\label{th3} There exists a $\underline\ld>-1$ ($\underline\ld$ is given in Theorem \ref{th2}), such that
for any $\ld>\underline\ld$, there exist a unique $Q_0>0$ and a unique solution
$(\psi,\Gamma)$ to the injection flow problem 2. Furthermore,
$u_{\pm}>0$ in $\O^{\pm}\cup\Gamma$, and $v_{\pm}>0$ in
$\O^{\pm}\cup\Gamma$.
\end{theorem}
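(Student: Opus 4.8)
The plan is to reduce the well-posedness of the injection flow problem 2 to an inversion of the flux-to-jump map $Q_0\mapsto\ld(Q_0)$ that is already produced by Theorems \ref{th1} and \ref{th2}. The key observation is that the defining conditions (1)--(7) are shared by both problems; the only difference is whether $Q_0$ or $\ld$ is prescribed in advance. Hence any solution of problem 2 with a given jump $\ld$ automatically carries a well-defined flux $Q_0$ read off from the boundary condition \eqref{a0} (where $\psi=-\f{Q_0}{\sqrt{\rho_-}}$ on $N_2\cup S_2$), and is thereby simultaneously a solution of problem 1 for that $Q_0$. Conversely, the Theorem \ref{th1} solution associated with a flux $Q_0$ solves problem 2 with $\ld=\ld(Q_0)$. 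Solutions of problem 2 with jump $\ld$ therefore correspond bijectively to the fluxes $Q_0>0$ satisfying $\ld(Q_0)=\ld$.

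First I would show that $Q_0\mapsto\ld(Q_0)$ is a continuous strictly increasing bijection from $(0,+\infty)$ onto $(\underline\ld,+\infty)$. Strict monotonicity and continuity are exactly Theorem \ref{th2}(1). For the range, Theorem \ref{th2}(2) gives $\ld(Q_0)\to\underline\ld$ as $Q_0\to 0^+$, while Theorem \ref{th2}(3), namely $\ld(Q_0)/Q_0^2\to\kappa$ with $\kappa>0$, forces $\ld(Q_0)\to+\infty$ as $Q_0\to+\infty$. Combined with monotonicity, the intermediate value theorem then shows that the image of $\ld(\cdot)$ is precisely the open interval $(\underline\ld,+\infty)$ and that $\ld(\cdot)$ is injective. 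Consequently, for every prescribed $\ld>\underline\ld$ there is one and only one $Q_0>0$ with $\ld(Q_0)=\ld$.

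With this $Q_0$ in hand, existence is immediate: the pair $(\psi,\Gamma)$ furnished by Theorem \ref{th1} for this $Q_0$ satisfies conditions (1)--(7) and has jump constant exactly the prescribed $\ld$, so it solves problem 2. For uniqueness, I would take any solution $(\psi,\Gamma)$ of problem 2 with the given $\ld$; by the identification above it is a solution of problem 1 for its own flux $Q_0$, whence the uniqueness clause of Theorem \ref{th1} forces its jump to equal $\ld(Q_0)$, so $\ld(Q_0)=\ld$ and the strict monotonicity just established pins down $Q_0$ uniquely; Theorem \ref{th1} then identifies $(\psi,\Gamma)$ with the single solution attached to that flux. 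The positivity $u_{\pm}>0$ and $v_{\pm}>0$ in $\O^{\pm}\cup\Gamma$ is inherited verbatim from Theorem \ref{th1}. The only genuinely delicate point---and the step I would write out with care---is the two-way identification of problem 1 and problem 2 solutions through the flux: one must check that the flux recovered from \eqref{a0} is unambiguous and that no boundary or far-field condition of Definition \ref{def1} is lost in passing between the two formulations. Granting this, the remainder is a monotone inverse-function argument carrying no analytic obstruction, the genuine analysis having already been absorbed into Theorems \ref{th1} and \ref{th2}.
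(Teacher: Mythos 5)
Your proposal is correct and takes essentially the same route as the paper: the paper derives Theorem \ref{th3} from a corollary stating that for each $\ld\in(\underline\ld,+\infty)$ there is a unique $Q=Q(\ld)>0$, obtained exactly by inverting the strictly monotone, continuous map $Q_0\mapsto\ld(Q_0)$ whose range is $(\underline\ld,+\infty)$ by parts (1)--(3) of Theorem \ref{th2}, and then appealing to Theorem \ref{th1} for existence, uniqueness, and the positivity of $u_\pm$, $v_\pm$. Your explicit two-way identification of solutions of problems 1 and 2 through the flux read off from \eqref{a0} is the (unstated but implicit) justification the paper relies on when it says the theorem ``follows immediately.''
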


\begin{remark} Similar to the Theorem \ref{th1}, when the constant $\ld$ is imposed,
the flux of the injection flux $Q_0$ can be regarded as a parameter to
solve the injection flow problem 2. And the unique solvability of
the flux $Q_0$ can be determined by the continuous fit condition. In
particular, for $\ld=0$, the stream function is harmonic in the
whole fluid field $\O$, and the flux $Q_0>0$ is uniquely determined by
the following formula
$$(a\sin\th-b\cos\th)^{\f\pi\th}=\left(\f\pi\th-1\right)\left(\f{Q_0}{\sqrt{\rho_-}}\right)^{\f\pi\th}+\f{b\pi}{\th}\left(\f{Q_0}{\sqrt{\rho_-}}\right)^{\f\pi\th-1},$$  due to the conformal mapping in
\cite{MI}.
\end{remark}

As we mentioned before, A. Friedman established the well-posedness
results for the simple case of horizontal blade surfaces ($b=0$),
and proposed an open problem on the general case as shown in Figure
\ref{f3}. However, from the mathematical point of view, the
extension to the present problem is not straightforward, and
involves some additional difficulties. For the special case $b=0$
(see Figure \ref{f2}), consider the critical case $Q_0=0$, the
injection flow vanishes and the mainstream flow is nothing but a
trivial uniform flow. The free boundary is the segment connecting
the leading edge $A$ and the trailing edge $B$. However, for the
general case ($b\neq0$), there does not exist a trivial flow for the
critical case $Q_0=0$. This is the one of main differences and the
difficulties here. This fact prevents us to establish the lower
bound of $\ld$ while $Q_0\rightarrow 0$. To overcome this difficulty,
we will investigate the limiting flow ($Q_0\rightarrow0$), and show
that the free boundary initiates smoothly at $A$ and terminated at
the wall $S_2$. Moreover, we will show that the intersection of the
free boundary $\Gamma$ and $S_2$ must below the trailing edge $B$.
Another difference is that the domain is a star-sharped one with
respect to $B$ for the special case $b=0$, we can take a rescaling
transform to obtain the uniqueness. Furthermore, the property can
not hold for the general case, and we have to develop a new method
to obtain the uniqueness.

The basic idea in this paper is to seek a two-phase fluid with a
smooth interface connecting at the leading edge $A$. A truncated
injection flow problem is presented in Section 2, and furthermore,
we give a result on existence and uniqueness in truncated fluid
field. Section 3 studies some useful properties of the minimizer and
free boundary in the truncated domain. In particular, we will
establish the relationship between the jump constant $\ld$ and the
injected flux $Q_0$, which builds a bridge between the injection flow
problem 1 and 2. Section 4 is devoted to the solution of the
injection flow problem using some uniform estimates of the solution
in truncated domain. The analysis reveals the existence and
uniqueness of the two-phase fluid with $C^1$-smooth interface, the
fact firstly proved in \cite{FA3} for special case. Our results
solve the open problem on the well-posedness of an ideal fluid
injected obliquely from a slot into a stream.

\section{The truncated injection flow problem}
To solve the injection flow problem, we first study the truncated
injection flow problem with finite height in this section. To
simplify notation, denote
$$Q=\f{Q_0}{\sqrt{\rho_-}}.$$

For any $L>b$, denote
$$N_L=\{(x,L)\mid -\infty<x<+\infty\}\ \ \text{and}\ \ \O_L=\O\cap\{y<L\}.\ \ \text{(See Figure \ref{f9})}$$

\begin{figure}[!h]
\includegraphics[width=100mm]{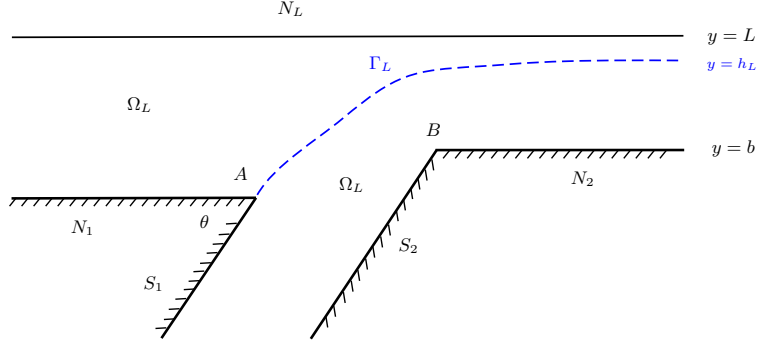}
\caption{Truncated flow field}\label{f9}
\end{figure}

The definition of the truncated injection flow problem will be given
in the following.

The truncated injection flow problem 1 corresponding to the
injection flow problem 1 is as follows: For any given $Q>0$, does
there exist a unique $\ld_L$ and a unique injection flow
$(\psi_L,\Gamma_L)$ in the truncated domain $\O_L$, such that the
mainstream flow possesses uniform speed in upstream, and the
interface $\Gamma_L$ connects at $A$ and extends to infinity?

Next, we will give the definition of the solution to the truncated
injection flow problem 1.

\begin{definition}\label{def2}
{\bf(A solution to the truncated injection flow problem 1).}\\
For any $L>b$, a vector $(\psi_L,\Gamma_L)$ is called a solution to
the truncated injection flow
problem 1, provided that \\
(1) $\Delta\psi_L=0$ in $\O_L\setminus \Gamma_L$, $\psi_L\in
C^0(\bar\O_L)$ and $\g\psi_L\in
L^{\infty}(\O_L\setminus B_\e(B))$ for any $\e>0$.\\
(2) $\psi_L=0$ satisfies the Dirichlet boundary conditions \eqref{a0} and $\psi_L=L$ on $N_L$. \\
 (3) The free
boundary $\Gamma_{L}: y=k_L(x)$, and $k_L(x)$ is a $C^1$-smooth
 strictly increasing function in $(0,+\infty)$, and $k_L(x)>b$
 for any $x\geq a$. Furthermore,
\be\label{b1}k_L(0)=0,\ee and there exists a $h_L\in(b,L)$, such
that $$ \lim_{x\rightarrow+\infty}k_L(x)=h_L\ \ \text{and}\ \
\lim_{x\rightarrow+\infty}k_L'(x)=0.$$  \\
(4) $\psi_L$ satisfies the Rankine-Hugoniot jump condition on
$\Gamma_L$, namely,
\be\label{b00}\left(\f{\p\psi_L^-}{\p\nu}\right)^2-\left(\f{\p\psi_L^+}{\p\nu}\right)^2=\ld_L\
\ \text{on}\ \ \Gamma_L,\ee
where $\ld_L=\f{Q^2}{(h_L-b)^2}-\f{L^2}{(L-h_L)^2}$.\\
(5) $\Gamma_L$ is continuously differentiable at $A$ and
\be\label{b2}k_L'(0+0)=\left\{\ba{ll} \tan\th,\
&\text{if $\ld_L>0$},\\
0,\
&\text{if $\ld_L<0$},\\
\tan\f{\th}2,\ &\text{if}\ \ld_L=0.\ea\right.\ee\\
 (6) $\psi_L$ has the
following asymptotic behaviors
$$\psi_L(x,y)\rightarrow\left\{\ba{ll} \f{Q(y-h_L)}{h_L-b},\
&\text{if $b<y<h_L$, as $x\rightarrow+\infty$},\\
\f{L(y-h_L)}{L-h_L},\ &\text{if $h_L<y<L$, as
$x\rightarrow+\infty$},\ea\right.$$
$$\psi_L(x,y)\rightarrow y,\quad \text{if $0<y<L$, as
$x\rightarrow-\infty$},$$ and
$$\left|\psi_L(x,y)-\f{Q(y\cos\th-x\sin\th)}{a\sin\th-b\cos\th}\right|\ra
0 \ \ \text{uniformly in any compact subset of $S$,}$$ as
$y\ra-\infty$, where $S=\{(x,y)\mid
y\cot\th<x<(y-b)\cot\th+a,-\infty<y<+\infty\}$.\\
(7) $\f{L(y-h_L)}{L-h_L}\leq\psi_L^+(x,y)\leq y$ in
$\O_L\cap\{y>0\}$.

\end{definition}

\begin{remark}\label{re2}
It should be noted that $f(t)=\f{Q^2}{(t-b)^2}-\f{L^2}{(L-t)^2}$ is
a strictly monotone decreasing function for $t\in(b,L)$. Therefore,
the asymptotic height $h_L\in(b,L)$ of the free boundary can be
determined uniquely by $\ld=\f{Q^2}{(h_L-b)^2}-\f{L^2}{(L-h_L)^2}$.
\end{remark}

\subsection{Variational approach}
To solve the truncated injection flow problem 1, as the first step,
we introduce a truncated variational problem for any given parameter
$\ld\in(-\infty,+\infty)$ and $Q>0$. Secondly, we will verify that
there exists a unique parameter $\ld=\ld_L$, such that the interface
$\Gamma_L$ connects at the leading edge $A$. Finally, taking
$L\rightarrow+\infty$ yields the existence of solution to the
injection flow problem 1.

For any $\mu>1$, denote $$\O_{L,\mu}=\O_L\cap\{(x,y)\mid x>-\mu,
y>-\mu\}\ \ \text{and}\ \ \sigma_{L,\mu}=\{(-\mu,y)\mid 0\leq y\leq
L\},$$ and
 $$D_{1,L,\mu}=\O_{L,\mu}\cap\{(x,y)\mid
x<0,y<0\}\ \ \text{and}\ \ D_{2,L,\mu}=\O_{L,\mu}\cap\{(x,y)\mid
x<0, y>0\},$$ and  $$N_{1,\mu}=N_1\cap\{x\geq-\mu\},\ \
S_{1,\mu}=S_1\cap\{y\geq-\mu\}\ \ \text{and}\ \ N_{L,\mu}=N_L\cap\{
x\geq-\mu\},$$ and
 $$S_{\mu}=\{-\mu\tan\th<x<a-(\mu+b)\tan\th, y=-\mu\}\ \ \text{and}\ \ S_{2,\mu}=S_2\cap\{
y\geq-\mu\}.$$ Please see Figure \ref{f10}.

\begin{figure}[!h]
\includegraphics[width=100mm]{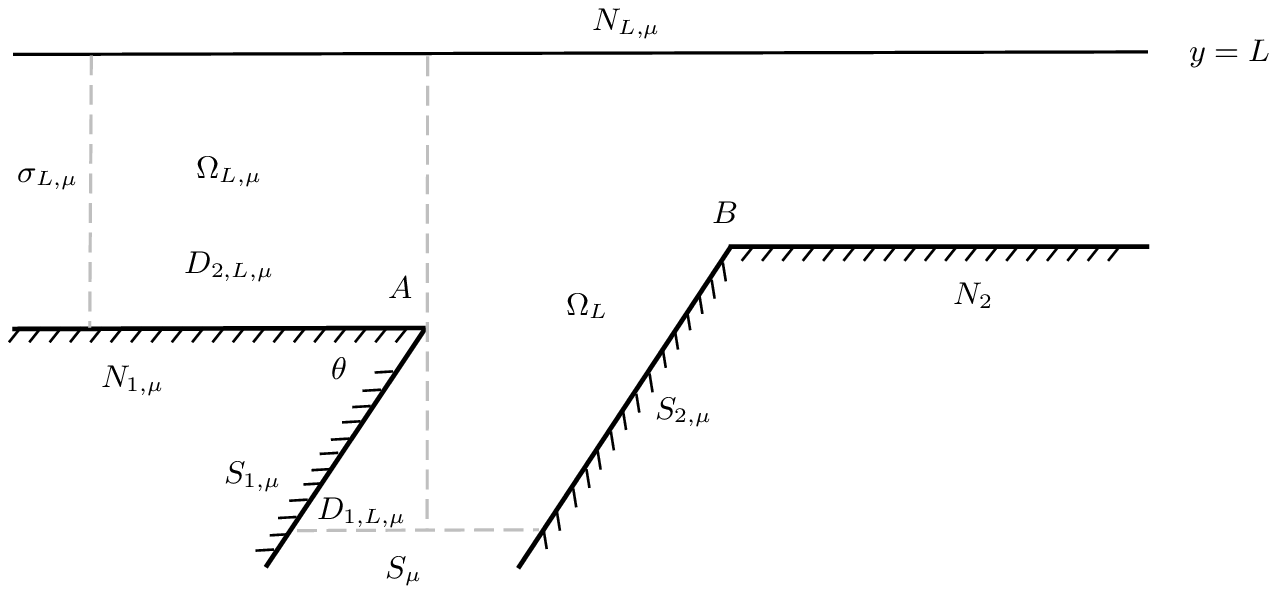}
\caption{The nozzle walls and the ground}\label{f10}
\end{figure}

As mentioned in Remark \ref{re2}, for any given $\ld$ and $L>b$, we
can obtain a unique asymptotic height $h_L\in(b,L)$ of the interface
$\Gamma_{\ld,L}$. Then, we can define $\ld_{1,L}$, $\ld_{2,L}$ and
$\ld_{0,L}$ as follows
$$\ld_{1,L}=\f{Q}{h_L-b},\ \
\ld_{2,L}=\f{L}{L-h_L}\ \  \text{and}\ \
\ld_{0,L}=\min\{\ld_{1,L},\ld_{2,L}\}.$$ Obviously, $
\ld=\ld_{1,L}^2-\ld_{2,L}^2$. Moreover, we give the following
functional
$$J_{\ld,L,\mu}(\psi)=\int_{\O_{L,\mu}}\left|\nabla\psi-(\ld_{1,L}I_{\{\psi<0\}}+\ld_{2,L}I_{\{\psi>0\}}+\ld_{0,L}I_{\{\psi=0\}})I_{\{x>0\}}e\right|^2dxdy$$
where $e=(0,1)$ and $I_{D}$ is the characteristic function of the
set $D$. And the admissible set is defined as follows
$$\ba{rl}K_{L,\mu}=\{\psi\mid &\psi\in H^1_{loc}(\O_{L,\mu}),\ -Q\leq\psi\leq L\ \text{a.e. in $\O_{L,\mu}$},\ \ 0\leq\psi\leq L\
\text{a.e. in $D_{2,L,\mu}$},\\
&-Q\leq\psi\leq 0\ \text{a.e. in $D_{1,L,\mu}$},\ \psi=-Q\ \text{on
$S_\mu\cup S_{2,\mu}\cup N_2$},\\
&\ \psi=L\ \text{on $N_{L,\mu}\cup\sigma_{L,\mu}$},\
\psi=\max\{L-(x+\mu)L,0\}\ \text{on
$N_{1,\mu}$},\\
&\ \psi=\min\{-Q+(y+\mu)Q,0\}\ \text{on $S_{1,\mu}$}\}.\ea$$

{\bf The truncated variational problem $(P_{\ld,L,\mu})$:} For any
$L>b$, $\mu>1$ and $\ld\in(-\infty,+\infty)$, find a
$\psi_{\ld,L,\mu}\in K_{L,\mu}$ such that
$$J_{\lambda,L,\mu}(\psi_{\ld,L,\mu})=\min_{\psi\in K_{L,\mu}}
J_{\lambda,L,\mu}(\psi).$$ Define the free boundary in the truncated
domain as
$$\Gamma_{\ld,L,\mu}=\O_{L,\mu}\cap\{x>0\}\cap\{\psi_{\ld,L,\mu}=0\}.$$

For any $L>b$, $\mu>1$ and $\ld\in\mathbb{R}$, the existence and
uniqueness of the minimizer to the truncated variational problem
$(P_{\ld,L,\mu})$ can be established along the proof of Theorem 2.1
and Lemma 2.2 in \cite{FA3}. We state the results in the following.
\begin{prop}\label{lb1}(Theorem 2.1 and Lemma 2.2 in \cite{FA3}) For any $L>b$, $\mu>1$ and $\ld\in(-\infty,+\infty)$,
there exists a unique minimizer $\psi_{\ld,L,\mu}$ to the truncated
variational problem $(P_{\ld,L,\mu})$. Moreover,
$$\ba{rl}\Gamma_{\ld,L,\mu}=&\O_{L,\mu}\cap\{x>0\}\cap\{\psi_{\ld,L,\mu}=0\}\\
=&\O_{L,\mu}\cap\{x>0\}\cap\p\{\psi_{\ld,L,\mu}<0\}\\
=&\O_{L,\mu}\cap\{x>0\}\cap\p\{\psi_{\ld,L,\mu}>0\},\ea$$ and
$\psi_{\ld,L,\mu}(x,y)$ is monotone increasing with respect to $y$
and there exists a continuous function $k_{\ld,L,\mu}(x)$ for $x>0$,
such that
$$\Gamma_{\ld,L,\mu}=\{(x,y)\in \O_L\mid x>0, y=k_{\ld,L,\mu}(x)\}.$$
$\psi_{\ld,L,\mu}$ satisfies the free boundary condition in the weak
sense, namely, \be\label{b0}\ba{rl}\lim_{\e\rightarrow
0^+,\delta\rightarrow
0^+}&\left(\int_{\O_{L,\mu}\cap\{x>0\}\cap\p\{\psi_{\ld,L,\mu}>\e\}}(|\g\psi_{\ld,L,\mu}|^2-\ld_{2,L}^2)\eta\cdot\nu
dS\right.
\\
&\left.+\int_{\O_{L,\mu}\cap\{x>0\}\cap\p\{\psi_{\ld,L,\mu}<-\delta\}}(|\g\psi_{\ld,L,\mu}|^2-\ld_{1,L}^2)\eta\cdot\nu
dS\right)=0.\ea\ee Furthermore, if $\ld<0$ and $|\ld|$ is
sufficiently
large, then we have\\
(1) $\psi_{\ld,L,\mu}(x,y)$ is monotone decreasing with respect to
$x$.\\
(2) $k_{\ld,L,\mu}(x)$ is monotone increasing with respect to
$x>0$.\\
(3) $k_{\ld,L,\mu}(0)=\lim_{x\ra 0^+}k_{\ld,L,\mu}(x)$ exists and
$0\leq k_{\ld,L,\mu}(x)\leq L$.
\end{prop}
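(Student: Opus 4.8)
The plan is to recognize that $(P_{\ld,L,\mu})$ is, after a harmless algebraic reduction, a standard Alt--Caffarelli--Friedman two-phase variational problem, and then to transcribe the machinery behind Theorem 2.1 and Lemma 2.2 of \cite{FA3}. First I would expand the integrand. Writing $c(\psi)=\ld_{1,L}I_{\{\psi<0\}}+\ld_{2,L}I_{\{\psi>0\}}+\ld_{0,L}I_{\{\psi=0\}}$ and using $\g\psi\cdot e=\p_y\psi$,
$$
J_{\ld,L,\mu}(\psi)=\int_{\O_{L,\mu}}|\g\psi|^2
-2\int_{\O_{L,\mu}\cap\{x>0\}}c(\psi)\,\p_y\psi
+\int_{\O_{L,\mu}\cap\{x>0\}}c(\psi)^2.
$$
The crucial observation is that the cross term is the same for every competitor: on $\{\psi>0\}$ one has $c(\psi)\p_y\psi=\ld_{2,L}\p_y\psi^+$ and on $\{\psi<0\}$ one has $c(\psi)\p_y\psi=-\ld_{1,L}\p_y\psi^-$, so the cross term equals $\ld_{2,L}\int_{\{x>0\}}\p_y\psi^+-\ld_{1,L}\int_{\{x>0\}}\p_y\psi^-$; integrating $\p_y$ by parts over the \emph{fixed} region $\O_{L,\mu}\cap\{x>0\}$ turns each into a boundary flux, and since the outer normal on the only free portion of that boundary, the segment $\{x=0\}$, has vanishing $y$-component while $\psi$ is prescribed on all the remaining portions, this term is a constant independent of $\psi\in K_{L,\mu}$. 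Hence minimizing $J_{\ld,L,\mu}$ is equivalent to minimizing the genuine ACF functional
$$
\tilde J(\psi)=\int_{\O_{L,\mu}}|\g\psi|^2
+\int_{\O_{L,\mu}\cap\{x>0\}}\left(\ld_{2,L}^2 I_{\{\psi>0\}}+\ld_{1,L}^2 I_{\{\psi<0\}}+\ld_{0,L}^2 I_{\{\psi=0\}}\right),
$$
which is exactly the form treated in \cite{FA3,ACF3}.

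With this reduction in hand, existence follows from the direct method: the constraint $-Q\le\psi\le L$ bounds a minimizing sequence in $L^\infty$, the Dirichlet term controls it in $H^1_{loc}$, and so $\psi_j\rightharpoonup\psi_{\ld,L,\mu}$ along a subsequence. The Dirichlet energy is weakly lower semicontinuous, while the measure term is lower semicontinuous precisely because $\ld_{0,L}^2=\min\{\ld_{1,L}^2,\ld_{2,L}^2\}$: after subtracting the fixed quantity $\ld_{0,L}^2\,|\O_{L,\mu}\cap\{x>0\}|$, the remaining penalisation carries nonnegative coefficients on $|\{\psi>0\}|$ and $|\{\psi<0\}|$, to which Fatou's lemma applies. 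I would then import the ACF regularity theory of \cite{ACF3} and \cite{FA3}: $\psi_{\ld,L,\mu}$ is harmonic in $\{\psi>0\}$ and in $\{\psi<0\}$, is locally Lipschitz, and both phases are non-degenerate, so the three sets in the displayed chain coincide, i.e. $\Gamma_{\ld,L,\mu}=\O_{L,\mu}\cap\{x>0\}\cap\p\{\psi>0\}=\O_{L,\mu}\cap\{x>0\}\cap\p\{\psi<0\}$, with a $C^{1,\alpha}$ reduced free boundary.

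To obtain monotonicity in $y$ and hence the graph $k_{\ld,L,\mu}$, I would use a sliding comparison: the data and the domain $\O_{L,\mu}$ are ordered under downward vertical translation, so comparing $\psi_{\ld,L,\mu}(x,y)$ with $\psi_{\ld,L,\mu}(x,y+t)$ and invoking the maximum principle in each phase forces $\p_y\psi\ge0$; non-negativity of $\p_y\psi$ then renders $\{\psi=0\}$ a graph $y=k_{\ld,L,\mu}(x)$, continuous by the ACF regularity. The weak relation \eqref{b0} I would derive from the inner (domain) variation of $\tilde J$, perturbing $\psi$ along $x\mapsto x+\e\,\eta$ and letting $\e\to0$, which yields $|\g\psi|^2=\ld_{2,L}^2$ from the positive side and $|\g\psi|^2=\ld_{1,L}^2$ from the negative side in the ACF weak formulation, i.e. exactly \eqref{b0}. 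For uniqueness I would exploit the monotonicity: each minimizer is the harmonic extension of the same data on its two phases, the free boundary is a graph, and \eqref{b0} fixes $|\g\psi|$ on each side of $\Gamma$; translating one minimizer vertically until it first touches another and applying the strong maximum principle in each phase together with the Hopf lemma at the free boundary then forces the graphs, and hence the minimizers, to coincide.

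Finally, for $\ld<0$ with $|\ld|$ large one has $\ld_{1,L}\ll\ld_{2,L}$, so the injected (negative) phase is strongly favored; comparing $\psi_{\ld,L,\mu}$ with its horizontal translate and using that the jet then fills the channel $S$ monotonically gives $\p_x\psi\le0$, whence $k_{\ld,L,\mu}$ is increasing and the limit $k_{\ld,L,\mu}(0^+)\in[0,L]$ exists. I expect the main obstacle to be precisely these monotonicity arguments in the oblique geometry: because the walls $S_1,S_2$ are inclined at angle $\th$, the domain is invariant under neither vertical nor horizontal translation, so the naive sliding comparison fails and one must slide along directions adapted to the inclination while controlling the error generated at the tilted boundary. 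Establishing uniqueness and these monotonicity properties, rather than the by-now routine existence and ACF regularity, is the delicate core of the argument.
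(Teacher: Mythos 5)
First, note that the paper offers no proof of this proposition: it is imported wholesale from Theorem 2.1 and Lemma 2.2 of \cite{FA3}, so your reconstruction can only be measured against the Alt--Caffarelli--Friedman machinery behind those references. Your opening reduction is correct and is exactly the standard device: since $\g\psi=0$ a.e.\ on $\{\psi=0\}$, the cross term equals $-2\ld_{2,L}\int_{\{x>0\}}\p_y\psi^{+}+2\ld_{1,L}\int_{\{x>0\}}\p_y\psi^{-}$, and because $\psi$ is prescribed on every portion of $\p(\O_{L,\mu}\cap\{x>0\})$ except the segment on $\{x=0\}$, where the outer normal has vanishing $y$-component, this quantity is the same for every competitor in $K_{L,\mu}$. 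The direct-method existence argument, the role of $\ld_{0,L}=\min\{\ld_{1,L},\ld_{2,L}\}$ in the lower semicontinuity, and the derivation of \eqref{b0} from domain variations are all faithful to \cite{AC1,ACF4,FA3}.

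The genuine gap lies in the uniqueness and monotonicity steps, which are the substance of the proposition. In \cite{FA3} and \cite{ACF3,ACF5}, uniqueness of the minimizer for a \emph{fixed} $\ld$ is proved purely variationally: for a vertical translate $v_{\e}$ of the second minimizer one uses the submodularity $J(\min(u,v_{\e}))+J(\max(u,v_{\e}))\le J(u)+J(v_{\e})$ together with the ordering of the boundary data under translation to conclude $v_{\e}\le u$, and then lets $\e\to 0$; no regularity of the free boundary is required. Your argument instead slides one minimizer to a first touching point and applies Hopf's lemma there. At this stage one does not yet know that the free boundary of an arbitrary minimizer is classically smooth at that point (analyticity is Theorem \ref{lb3}, proved afterwards and for the minimizer this proposition is supposed to supply), and you do not rule out that the first touching occurs at the corner $A$, at the reentrant corner $B$, on $\{x=0\}$, or on the artificial boundaries $S_{\mu}$ and $\sigma_{L,\mu}$ --- precisely the degenerate cases the paper must treat separately, via blow-ups, when it runs this style of argument in Lemma \ref{lb8}. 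Similarly, you correctly observe that pointwise vertical sliding fails in the oblique slot, but you then leave the $y$-monotonicity (asserted for \emph{all} $\ld$, not only large $|\ld|$) and items (1)--(3) unproved; the standard repair is again the min/max comparison of $\psi(x,y)$ with $\psi(x,y+t)$ restricted to the common domain $\O_{L,\mu}\cap(\O_{L,\mu}+te)$, whose boundary data are ordered because of the sign constraints built into $K_{L,\mu}$ on $D_{1,L,\mu}$ and $D_{2,L,\mu}$, rather than a maximum-principle sliding of graphs. As written, the existence part is solid but the uniqueness and monotonicity claims are not established.
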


\subsection{The regularity of the free boundary $\Gamma_{\ld,L,\mu}$}

In Theorem 8.12 in \cite{ACF4}, Alt, Caffarelli and Friedman proved
that the free boundary $\Gamma_{\ld,L,\mu}$ of the minimizer
$\psi_{\ld,L,\mu}$ is $C^1$-smooth. Based on the significant work
\cite{C1} by Caffarelli, we will obtain the higher regularity of the
free boundary of the minimizer in this subsection. First, we give
the definition of the weak solution of a free boundary problem as in
\cite{C1}.

\begin{definition}\label{def3} Assume that $G(t)$ is a continuous strictly
monotone increasing function with respect to $t\in\mathbb{R}$, which
satisfies that $G(t)\geq t$ and $t^{-C}G(t)$ is decreasing with
respect to $t> 0$, for some large $C>0$. Let $E$ be a bounded open
set in $\O_{L,\mu}\cap\{x>0\}$. A continuous function $\psi$ in $E$
is called a {\it weak solution} of the free boundary problem,
provided that $\psi$ satisfies

 (1) $\Delta\psi=0$ in $E^+(\psi)=E\cap\{\psi>0\}$,

(2) $\Delta\psi=0$ in $E^-(\psi)=\text{int}(E\cap\{\psi\leq 0\})$,

(3) (The weak free boundary condition) $\psi$ satisfies the free
boundary condition
$$\psi^+_{\nu}=G(\psi_\nu^-)\ \ \text{along}\ \
\mathcal{F}(\psi)=E\cap\p\{\psi>0\},$$ in the following sense.

For any $X_0\in\mathcal{F}(\psi)$, if $\mathcal{F}(\psi)$ has an
one-side tangent ball at $X_0$ (i.e., there exists a ball $B_r(Y)$,
such that $X_0\in\p B_r(Y)$ and $B_r(X)$ is contained either in
$E^+(\psi)$ or in $E^-(\psi)$), then
$$\psi(X)=\alpha<X-X_0,\nu>^+-\beta<X-X_0,\nu>^-+o(|X-X_0|),\ \ \text{ $\beta\geq0$ and $\alpha=G(\beta)$},$$
where $\nu$ is the unit radial direction of $\p B_r(Y)$ at $X_0$
pointing into $E^+(\psi)$, $<p,q>^+=\max\{p\cdot q,0\}$ and
$<p,q>^-=\max\{-p\cdot q,0\}$.

\end{definition}

Next, we will obtain the analyticity of the free boundary
$\Gamma_{\ld,L,\mu}$ in the following, which implies that the
Rankine-Hugoniot condition \eqref{b00} on the free boundary holds in
the classical sense. The main idea borrows from the works
\cite{A1,C1,MB}.
\begin{theorem}\label{lb3}The free boundary $\Gamma_{\ld,L,\mu}$ is analytic.\end{theorem}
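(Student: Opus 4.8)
The plan is to establish analyticity of $\Gamma_{\ld,L,\mu}$ through the standard three-stage bootstrap: first upgrade the $C^1$-regularity (already available from Theorem~8.12 in \cite{ACF4}) to $C^{1,\alpha}$, then to smoothness, and finally to analyticity. The crucial observation is that near any point of $\Gamma_{\ld,L,\mu}$ we are in the \emph{two-phase} regime where both $\psi^+$ and $\psi^-$ are harmonic and nontrivial, and the free boundary condition is exactly the Rankine--Hugoniot relation \eqref{b00}, which in the notation of Definition~\ref{def3} reads $(\psi^+_\nu)^2=(\psi^-_\nu)^2+\ld$ with $\ld=\ld_{2,L}^2-\ld_{1,L}^2$ in the region $\{x>0\}$. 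To fit this into Caffarelli's framework I would set $G(t)=\sqrt{t^2+\ld}$ when $\ld\ge 0$, and argue in the region where $\psi^-_\nu$ stays bounded below so that $G$ is smooth; the hypotheses $G(t)\ge t$ and the decay of $t^{-C}G(t)$ in Definition~\ref{def3} are then readily checked. The key point is that by Proposition~\ref{lb1} the free boundary is a graph $y=k_{\ld,L,\mu}(x)$ that separates $\{\psi>0\}$ from $\{\psi<0\}$, so there are no one-phase points to worry about and the flux $|\g\psi^\pm|$ is nondegenerate on $\Gamma_{\ld,L,\mu}$.

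First I would verify that $\psi_{\ld,L,\mu}$ is a weak solution in the sense of Definition~\ref{def3}. Conditions (1) and (2) follow from harmonicity of the minimizer off the free boundary (part (1) of Proposition~\ref{lb1}), and the weak free-boundary condition~(3) is precisely the content of the identity \eqref{b0} recast via the one-sided tangent ball formulation; since $C^1$-regularity already guarantees a tangent plane at every point, the asymptotic expansion $\psi(X)=\alpha\langle X-X_0,\nu\rangle^+-\beta\langle X-X_0,\nu\rangle^-+o(|X-X_0|)$ with $\alpha=G(\beta)$ holds. Having placed ourselves in Caffarelli's setting, I would invoke his regularity theory in \cite{C1}: a Lipschitz (or flat) free boundary of a weak solution with $G$ smooth is $C^{1,\alpha}$, and once the free boundary is $C^{1,\alpha}$ both $\psi^+$ and $\psi^-$ extend $C^{1,\alpha}$ up to $\Gamma_{\ld,L,\mu}$ from their respective sides.

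With $C^{1,\alpha}$-regularity in hand I would pass to a hodograph/partial-hodograph transform, exactly as in the work of Kinderlehrer--Nirenberg and the treatment in \cite{A1,MB}. The transformation flattens the free boundary and converts the two harmonic functions $\psi^\pm$ together with the nonlinear transmission condition $(\psi^+_\nu)^2-(\psi^-_\nu)^2=\ld$ into a fully nonlinear elliptic system for the hodograph variables on a fixed domain, with the free boundary now a coordinate hyperplane. Because the transformed equations are elliptic with real-analytic dependence on their arguments (the nonlinearity $G(t)=\sqrt{t^2+\ld}$ is real-analytic away from $t=0$, which is guaranteed by the nondegeneracy $\beta>0$ on $\Gamma_{\ld,L,\mu}$), elliptic bootstrap gives $C^\infty$ and then the Morrey analyticity theorem for nonlinear analytic elliptic systems yields that the transformed solution, and hence $k_{\ld,L,\mu}$, is analytic.

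The main obstacle, and where I would spend the most care, is the point $A$ and the possibility that $\beta=0$, i.e. that $\psi^-_\nu$ degenerates somewhere on $\Gamma_{\ld,L,\mu}$: there the nonlinearity $G(t)=\sqrt{t^2+\ld}$ loses analyticity (and when $\ld<0$ it is not even defined for small $t$), so the hodograph argument breaks down. To handle this I would first prove a nondegeneracy estimate showing $|\g\psi^\pm|$ is bounded away from zero on every compact piece of $\Gamma_{\ld,L,\mu}$ interior to $\O_{L,\mu}\cap\{x>0\}$, using the harmonicity and the monotonicity $\p_y\psi_{\ld,L,\mu}>0$ from Proposition~\ref{lb1} together with Hopf's lemma; this keeps $\beta$ bounded below and keeps us in the analytic regime of $G$. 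The endpoint $A$ is excluded from this interior argument and is governed separately by the smooth-fit condition \eqref{b2}, so the analyticity statement is understood on the interior of the free boundary, consistent with how \eqref{a3} is stated as a one-sided derivative at $A$.
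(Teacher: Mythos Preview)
Your proposal is correct and follows essentially the same route as the paper: verify that the minimizer is a weak solution in Caffarelli's sense, invoke \cite{C1} to upgrade to $C^{1,\alpha}$, then apply a partial hodograph transform to reduce to a quasilinear elliptic boundary problem on a fixed domain, bootstrap via Schauder, and finish with Morrey's analyticity theorem \cite{MB}. Two minor remarks: your sign convention for $\ld$ is the negative of the paper's (the paper has $\ld=\ld_{1,L}^2-\ld_{2,L}^2$ and sets $G(\beta)=\sqrt{\beta^2-\ld}$ under the normalization $\ld\le 0$), and the paper disposes of the case $\ld=0$ separately---there $\psi$ is harmonic across the free boundary and the implicit function theorem (via $\p_y\psi>0$) gives analyticity directly, avoiding the hodograph machinery; your treatment of nondegeneracy via Hopf's lemma is in fact more explicit than the paper's, which simply asserts $|\g\psi^+|\neq 0$ at the point under consideration.
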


\begin{proof}
{\bf Step 1.} In this step, we will show that the minimizer
$\psi_{\ld,L,\mu}$ to the truncated variational problem
$(P_{\ld,L,\mu})$ is a weak solution in Definition \ref{def3}.

Similar to Theorem 2.2 in \cite{ACF4}, it is easy to verify that the
minimizer $\psi_{\ld,L,\mu}$ is harmonic in
$E\setminus\{\psi_{\ld,L,\mu}=0\}$, where $E$ is a bounded open set
in $\O_{L,\mu}\cap\{x>0\}$, which implies that $\psi_{\ld,L,\mu}$
satisfies the conditions (1) and (2) in Definition \ref{def3}. Next,
it suffices to verify the condition (3) in Definition \ref{def3}.
Without loss of generality, we assume that $\ld\leq0$.

For any $X_0\in\mathcal{F}(\psi_{\ld,L,\mu})$, by means of Theorem
7.4 in \cite{ACF4}, we have
 $$\psi_{\ld,L,\mu}(X)=\alpha<X-X_0,\nu>^+-\beta<X-X_0,\nu>^-+o(|X-X_0|),$$
 where $\alpha>0$, $\beta>0$ and $\ld=\beta^2-\alpha^2$.

 Thus,
$\alpha=G(\beta)=\sqrt{\beta^2-\ld}$. It is easy to see that
$G(\beta)$ is strictly monotone increasing with respect to $\beta$
and $\beta^{-1}G(\beta)$ is decreasing with respect to $\beta$.

Hence, we conclude that the minimizer $\psi_{\ld,L,\mu}$ is a weak
solution in Definition \ref{def3}.

{\bf Step 2.} Next, we will obtain the analyticity of the free
boundary.

 Since $\psi_{\ld,L,\mu}$ is the weak solution in
Definition \ref{def3}, by using Theorem 1 in \cite{C1}, we can
conclude that the free boundary
$\O_{L,\mu}\cap\p\{\psi_{\ld,L,\mu}>0\}$ is $C^{1,\alpha}$ for some
$\alpha\in(0,1)$.

Denote $\psi=\psi_{\ld,L,\mu}$ for simplicity. Since
$$\Gamma_{\ld,L,\mu}=\O_{L,\mu}\cap\{x>0\}\cap\p\{\psi>0\}=\O_{L,\mu}\cap\{x>0\}\cap\{\psi=0\},$$
is $C^{1,\alpha}$-smooth, which implies that the
$\mathcal{L}^2$-measure of the free boundary $\Gamma_{\ld,L,\mu}$ is
zero. Therefore, we can use a $C^{1,\alpha}$ transformation to
flatten the free boundary. Then reflect $\psi^+$ to the full
neighborhood of the free boundary, applying the Schauder estimates
for elliptic equation in divergence form in Section 9 in \cite{A1},
we can obtain the $C^{1,\alpha}$ regularity of $\psi^+$ up to the
free boundary. Similarly, we can obtain the $C^{1,\alpha}$
regularity of $\psi^-$ up to the free boundary. Moreover, it follows
from \eqref{b0} that \be\label{b3}|\g\psi^-|^2-|\g\psi^+|^2=\ld\ \
\text{on the free boundary}.\ee

If $\ld=0$, it follows from Theorem 2.2 in \cite{ACF4} that $\psi$
is harmonic in $\O_{L,\mu}\cap\{x>0\}$. By means of the monotonicity
of $\psi(x,y)$ with respect to $y$, the strong maximum principle
gives that $\p_y\psi>0$ in $\O_{L,\mu}\cap\{x>0\}$. Hence, the
implicit function theorem gives that the level set
$\O_{L,\mu}\cap\{x>0\}\cap\{\psi=0\}$ is analytic.

If $\ld\neq0$, without loss of generality, we assume that $0$ is a
free boundary point of $\psi$, $|\g\psi^+(0)|\neq 0$ and the inner
normal to $\Gamma_{\ld,L,\mu}$ at $0$ is in the direction of the
positive $y$-axis. Extend $\t\psi$ as a $C^{1,\alpha}$ function into
a full neighborhood of $0\in\Gamma_{\ld,L,\mu}$, such that
$\t\psi=\psi$ in $\{\psi>0\}$. In view of $|\g\psi^+(0)|\neq 0$, one
has \be\label{b4}\t\psi_y(0)>0.\ee Define a mapping as follows,
$$S=TX=(s,t)\triangleq (x,\t\psi(x,y)), \ \ X=(x,y).$$ By virtue of \eqref{b4}, it is easy to
check that
$$\text{det}\left(\f{\p S}{\p X}\right)=\t\psi_y(x,y)>0\ \ \text{in a small neighborhood of $0$}.$$
And thus the mapping $T$ is a local diffeomorphism near $0$.

Denote the inverse transform as
\be\label{b5}\left\{\ba{rl}&x=s\\
&y=\phi(s,t).\ea\right.\ee

Therefore, the free boundary $\Gamma_{\ld,L,\mu}$ is transformed
into $t=0$, and we have
$$\left(\f{\p X}{\p S}\right)=\left(\f{\p S}{\p X}\right)^{-1}=\left(\begin{matrix} 1 &0 \\
-\f{\t\psi_x}{\t\psi_y} &\f{1}{\t\psi_y}
\end{matrix}\right).$$
Consequently, one has
$$\phi_s=\f{\p y}{\p s}=-\f{\t\psi_x}{\t\psi_y},\ \ \phi_t=\f{\p y}{\p
t}=\f{1}{\t\psi_y},$$ and \be\label{b6}\f{\p t}{\p
x}=\t\psi_x=-\f{\phi_s}{\phi_t},\ \ \f{\p t}{\p
y}=\t\psi_y=\f{1}{\phi_t}.\ee It follows from \eqref{b6} that
$$\mathcal{Q}\phi=\p_s\left(\f{\phi_s}{\phi_t}\right)+\p_t\left(-\f{1+\phi^2_s}{2\phi^2_t}\right)=0\ \ \text{in the neighborhood of $0$}.$$
Denote $A_1(\phi_s,\phi_t)=\f{\phi_s}{\phi_t}$ and
$A_2(\phi_s,\phi_t)=-\f{1+\phi^2_s}{2\phi^2_t}$.
It is easy to check that $$\mathcal{A}=\f{\p(A_1,A_2)}{\p(\phi_s,\phi_t)}=\f{1}{\phi_t^3}\left(\begin{matrix} \phi_t^2 &-\phi_s\phi_t \\
-\phi_s\phi_t &1+\phi_s^2
\end{matrix}\right).$$
Direct straightforward computations give that the matrix
$\mathcal{A}$ has two eigenvalues
$$\ld_1=\f{1+\phi_s^2+\phi_t^2+\sqrt{((1+\phi_t)^2+\phi_s^2)((1-\phi_t)^2+\phi_s^2)}}{2\phi_t^3},$$ and
$$\ld_2=\f{2}{\left(1+\phi_s^2+\phi_t^2+\sqrt{((1+\phi_t)^2+\phi_s^2)((1-\phi_t)^2+\phi_s^2)}\right)\phi_t}.$$
In view of \eqref{b4}, one has
$$\ld_1>0\ \ \text{and}\ \ \ld_2>0\ \ \text{in a small neighborhood of $0$}.$$

Thus, $\mathcal{Q}\phi=0$ is a quasilinear elliptic equation in a
neighborhood $E$ of $0$. Furthermore, $\phi$ satisfies the Neumann
type boundary condition as follows,
\be\label{b7}\left\{\ba{ll}\mathcal{Q}\phi=0~~~~&\text{in}\ \ \ D,\\
\f{\phi_t}{\sqrt{1+\phi_s^2}}=g(s)\ \ &\text{on}\ \ \bar
D\cap\{t=0\},\ea\right.\ee where $D=E\cap\{t>0\}$ and
$g(s,0)=\f{1}{\sqrt{|\g\psi^-(s,0)|^2-\ld}}$.

 Noting that $\t\psi$ is in $C^{1,\alpha}$ near $0$, we have that the coefficients of
 $\mathcal{Q}$ and $g(s)$ are $C^{0,\alpha}$.
 By using the elliptic regularity in Section 9 in \cite{A1}, we obtain that
 $\phi(S)$ is $C^{2,\alpha}$ near $0$. Furthermore, the free boundary $\Gamma_{\ld,L,\mu}$ can be described by
 $y=\phi(s,0)=\phi(x,0)$, and thus the free boundary $\Gamma_{\ld,L,\mu}$ is $C^{2,\alpha}$ near $0$.

Applying the Schauder estimates for elliptic equations in \cite{A1},
we can obtain the $C^{2,\alpha}$ regularity of $\psi^+$ and $\psi^-$
up to the free boundary $\Gamma_{\ld,L,\mu}$. By using the above
arguments, we can conclude that the free boundary
$\Gamma_{\ld,L,\mu}$ is $C^{3,\alpha}$.

Along the bootstrap arguments, the $C^{\infty}$ regularity of the
free boundary $\Gamma_{\ld,L,\mu}$ can be established. Finally, with
the aid of the results in Section 6.7 in \cite{MB}, we can obtain
that $\phi(s,t)$ is analytic in $t\geq 0$. Hence, we obtain the
analyticity of the free boundary $\Gamma_{\ld,L,\mu}$.

\end{proof}



It follows from Lemma 2.4 and Lemma 2.5 in \cite{FA3} that we can
obtain the existence of $\ld_{L,\mu}$, such that the continuous fit
condition of $\Gamma_{\ld_{L,\mu},L,\mu}$ at $A$ holds for any $L>b$
and $\mu>1$. Similar to the arguments on the compressible subsonic
flows in infinitely long nozzle in \cite{DD, DXX, DXY, XX1, XX2,
XX3}, we can obtain the asymptotic behavior of the flows in
downstream and in upstream. We omit the details here.
\begin{prop}\label{lb5}For any $Q>0$, $L>b$ and $\mu>1$, there exists a $\ld_{L,\mu}\in\mathbb{R}$ with $|\ld_{L,\mu}|\leq C$
 (the constant $C$ is independent of $\mu$), such that $k_{\ld_{L,\mu},L,\mu}(0)=0$ and
$\psi_{\ld_{L,\mu},L,\mu}$ is Lipschitz continuous in
$\bar\O_{L,\mu}\setminus B_\e(B)$ for any small $\e>0$. Furthermore,
$$\text{$k_{\ld_{L,\mu},L,\mu}(x)\ra h_{\ld_{L,\mu}}\in(b,L)$ and
$\psi_{\ld_{L,\mu},L,\mu}(x,y)\ra\psi_0(y)$}$$ in any compact subset
of $(b,L)$ as $x\ra+\infty$, where $h_{\ld_L,\mu}$ is determined
uniquely by
$$\ld_{L,\mu}=\f{Q^2}{(h_{\ld_{L,\mu}}-b)^2}-\f{L^2}{(L-h_{\ld_{L,\mu}})^2}\ \ \text{and}\ \ \psi_0(y)=\left\{\ba{ll} \f{Q(y-h_{\ld_{L,\mu}})}{h_{\ld_{L,\mu}}-b},\
&\text{if $b<y<h_{\ld_{L,\mu}}$},\\
\f{L(y-h_{\ld_{L,\mu}})}{L-h_{\ld_{L,\mu}}},\ &\text{if
$h_{\ld_{L,\mu}}<y<L$},\ea\right.$$

\end{prop}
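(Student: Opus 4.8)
The plan is to run the continuous-fit scheme of Alt--Caffarelli--Friedman (in the form of Lemma 2.4 and Lemma 2.5 of \cite{FA3}), to upgrade it with an a priori bound on $\ld_{L,\mu}$ that is uniform in $\mu$, and then to read off the downstream asymptotics by a translation-compactness argument as in the nozzle-flow references. I would first fix $Q>0$, $L>b$, $\mu>1$ and study how the detachment point of $\Gamma_{\ld,L,\mu}$ — the endpoint of the curve $y=k_{\ld,L,\mu}(x)$ as $x\ra 0^+$ — sits relative to the corner $A$; by Proposition \ref{lb1}(3) this endpoint exists and has height in $[0,L]$ once $\ld$ is sufficiently negative. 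As $\ld$ increases, Remark \ref{re2} gives that the asymptotic height $h_L=f^{-1}(\ld)$ decreases, so $\ld_{1,L}=Q/(h_L-b)$ increases while $\ld_{2,L}=L/(L-h_L)$ decreases; feeding this into a comparison argument for the minimizers $\psi_{\ld,L,\mu}$ produces an ordering of the positivity sets $\{\psi_{\ld,L,\mu}>0\}$, hence a monotone ordering of the free boundaries $\Gamma_{\ld,L,\mu}$. Continuity of the detachment point in $\ld$ then follows from the uniform Lipschitz bound for minimizers together with their non-degeneracy, which forbid the attachment point from jumping.

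Next I would analyze the two extreme regimes. When $\ld$ is very negative the secondary phase is squeezed into a thin layer near the bottom wall and the interface detaches strictly above $A$, so $k_{\ld,L,\mu}(0^+)>0$; when $\ld$ is large the interface is pushed down and attaches to the slanted wall $S_1$ below $A$. Since the attachment point moves monotonically and continuously along the boundary $N_1\cup S_1$ as $\ld$ varies, the intermediate value theorem selects a unique $\ld_{L,\mu}$ for which it coincides with the corner, i.e. $k_{\ld_{L,\mu},L,\mu}(0)=0$, which is precisely the continuous-fit condition.

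The step I expect to be the main obstacle is the bound $|\ld_{L,\mu}|\leq C$ with $C$ independent of $\mu$, since this is exactly what must survive the later limit $\mu\ra+\infty$. A crude estimate would let $\ld_{L,\mu}$ drift with the size of $\O_{L,\mu}$; to prevent this I would construct explicit \emph{local} barriers in a fixed neighborhood of the corner $A$. Concretely, I would exhibit one subsolution and one supersolution, each built from the linear/wedge profiles adapted to the opening angle $\th$ at $A$, whose associated Bernoulli constants equal $-C$ and $+C$ for a fixed $C=C(Q,L,\th)$. Because the boundary data on $N_1\cup S_1$ near $A$ and the geometry of the wedge do not depend on $\mu$, these barriers force the interface to detach above $A$ for $\ld<-C$ and to attach below $A$ for $\ld>C$, uniformly in $\mu$, trapping the continuous-fit value in $[-C,C]$. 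The Lipschitz continuity of $\psi_{\ld_{L,\mu},L,\mu}$ on $\bar\O_{L,\mu}\setminus B_\e(B)$ is then the standard interior-plus-boundary Lipschitz estimate for minimizers of the ACF functional, the corner $B$ being excluded because the gradient may be singular there.

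It remains to obtain the downstream behavior. I would consider the translates $\psi_{\ld_{L,\mu},L,\mu}(x+s,y)$, use the uniform Lipschitz bound to extract a locally uniform limit as $s\ra+\infty$, and verify that the limit minimizes the $x$-translation-invariant version of $(P_{\ld,L,\mu})$; such a minimizer must coincide with a one-dimensional shear profile, giving $k_{\ld_{L,\mu},L,\mu}(x)\ra h_{\ld_{L,\mu}}$ and $\psi_{\ld_{L,\mu},L,\mu}(x,y)\ra\psi_0(y)$ locally in $(b,L)$, with $h_{\ld_{L,\mu}}\in(b,L)$ pinned down by the monotone relation $\ld_{L,\mu}=Q^2/(h_{\ld_{L,\mu}}-b)^2-L^2/(L-h_{\ld_{L,\mu}})^2$ of Remark \ref{re2}. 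Matching the two linear branches of $\psi_0$ across $y=h_{\ld_{L,\mu}}$ reproduces exactly the Rankine--Hugoniot jump \eqref{b00}, which closes the argument.
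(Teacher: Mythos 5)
Your plan is correct and is essentially the route the paper takes: the paper simply invokes Lemmas 2.4 and 2.5 of \cite{FA3} for the continuous-fit selection of $\ld_{L,\mu}$ (monotone and continuous dependence of the detachment point on $\ld$, trapping between the two extreme regimes, with the bound on $\ld_{L,\mu}$ obtained from estimates localized near the corner $A$ and hence independent of $\mu$) and the nozzle-flow references \cite{DD,DXX,DXY,XX1,XX2,XX3} for the translation-compactness derivation of the downstream shear profile, omitting all details. Your sketch is a faithful reconstruction of exactly those cited arguments, and it correctly identifies the uniform-in-$\mu$ bound on $\ld_{L,\mu}$ as the one point that must be argued by a local construction near $A$ rather than by a global estimate on $\O_{L,\mu}$.
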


\subsection{The existence of the truncated injection flow problem 1}
In this subsection, we will investigate the existence of the
truncated injection flow problem 1. Moreover, the positivity of
horizontal velocity and vertical velocity will be obtained.

\begin{theorem}\label{lb6} For any $Q>0$ and $L>b$, there exist a $\ld_L$ and a solution $(\psi_{\ld_L},\Gamma_{\ld_L})$ to the truncated injection flow
problem 1.

\end{theorem}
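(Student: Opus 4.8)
The plan is to pass to the limit $\mu\to+\infty$ in the family of truncated variational solutions $(\psi_{\ld_{L,\mu},L,\mu},\Gamma_{\ld_{L,\mu},L,\mu})$ produced by Proposition~\ref{lb5}, and recover a solution to the truncated injection flow problem~1 posed on the full strip $\O_L$. First I would invoke the uniform bounds from Proposition~\ref{lb5}: the parameters $\ld_{L,\mu}$ satisfy $|\ld_{L,\mu}|\leq C$ with $C$ independent of $\mu$, and the minimizers $\psi_{\ld_{L,\mu},L,\mu}$ are uniformly Lipschitz on $\bar\O_{L,\mu}\setminus B_\e(B)$. Extracting a subsequence $\mu_j\to+\infty$, I would obtain $\ld_{L,\mu_j}\to\ld_L$ for some $\ld_L\in\mathbb{R}$, and by Arzel\`a--Ascoli the functions $\psi_{\ld_{L,\mu_j},L,\mu_j}$ converge locally uniformly (and weakly in $H^1_{loc}$) to a limit $\psi_{\ld_L}$ on $\O_L$, harmonic off its zero set and satisfying the Dirichlet data~\eqref{a0} together with $\psi_{\ld_L}=L$ on $N_L$. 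The standard stability theory for these one-phase/two-phase variational free boundary problems (Alt--Caffarelli--Friedman, as used in~\cite{FA3,ACF4}) guarantees that the free boundaries $\Gamma_{\ld_{L,\mu_j},L,\mu_j}$ converge in the Hausdorff sense to $\Gamma_{\ld_L}=\O_L\cap\{x>0\}\cap\p\{\psi_{\ld_L}>0\}$, and that the weak free boundary relation~\eqref{b0} passes to the limit, so $\psi_{\ld_L}$ is a weak solution in the sense of Definition~\ref{def3}. The analyticity of $\Gamma_{\ld_L}$ then follows from Theorem~\ref{lb3}.

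The next block of work is to verify each clause (1)--(7) of Definition~\ref{def2} for the limit. The interior regularity~(1), the boundary conditions~(2), and the monotonicity in $y$ (hence the representation $\Gamma_{\ld_L}:y=k_L(x)$ with $k_L$ increasing) descend directly from the corresponding properties of the approximants in Proposition~\ref{lb1}. The continuous fit~\eqref{b1}, namely $k_L(0)=0$, survives the limit because $k_{\ld_{L,\mu_j},L,\mu_j}(0)=0$ for every $j$ and the convergence of the free boundaries is uniform near $A$; this is exactly the feature Proposition~\ref{lb5} was arranged to deliver. The Rankine--Hugoniot jump~\eqref{b00} with $\ld_L=\f{Q^2}{(h_L-b)^2}-\f{L^2}{(L-h_L)^2}$ is recovered from~\eqref{b3} after identifying the asymptotic height $h_L$ via Remark~\ref{re2} and the downstream limit recorded in Proposition~\ref{lb5}; the smooth-fit condition~\eqref{b2} at $A$ is read off from the analyticity of $\Gamma_{\ld_L}$ together with the sign of $\ld_L$. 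The asymptotic behaviors~(6) and the two-sided bound in~(7) follow from the analogous bounds in Proposition~\ref{lb5} combined with the comparison arguments used there for the upstream, downstream, and slot directions.

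Finally I would establish the positivity statements $u_\pm>0$ and $v_\pm>0$, i.e.\ $\p_y\psi_{\ld_L}>0$ and $\p_x\psi_{\ld_L}<0$ throughout $\O_L\setminus\Gamma_{\ld_L}$ (recalling $u=\f1{\sqrt{\rho}}\p_y\psi$, $v=-\f1{\sqrt{\rho}}\p_x\psi$). The inequality $\p_y\psi_{\ld_L}\geq0$ is the limit of the monotonicity in $y$ from Proposition~\ref{lb1}, and the monotonicity in $x$ (item~(1) there, valid when $|\ld|$ is large) must be propagated; I would upgrade these weak inequalities to strict ones by applying the strong maximum principle and the Hopf lemma to the harmonic functions $\p_y\psi_{\ld_L}$ and $\p_x\psi_{\ld_L}$ in each phase, using the boundary conditions on $N_1,N_2,S_1,S_2$ to rule out interior or boundary zeros, and the Rankine--Hugoniot relation together with the analyticity of $\Gamma_{\ld_L}$ to carry strict positivity across the interface.

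The main obstacle I anticipate is the passage to the limit of the free boundary condition~\eqref{b0} and the identification of the limiting jump constant $\ld_L$: one must rule out degeneracy of the free boundary (collapse of the positive or negative phase, or the free boundary escaping to infinity or pinching to the walls $S_2,N_2$ before $A$) and secure nondegeneracy of $|\g\psi_{\ld_L}^\pm|$ along $\Gamma_{\ld_L}$, so that the limiting interface is genuinely a two-phase vortex sheet carrying the prescribed jump rather than a spurious one-phase free boundary. Controlling this requires the uniform-in-$\mu$ nondegeneracy and Lipschitz estimates of Proposition~\ref{lb5} and a careful matching of the asymptotic height $h_L$ in the downstream limit with the definition of $\ld_L$.
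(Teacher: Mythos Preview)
Your overall strategy---pass to the limit $\mu\to+\infty$ along the solutions of Proposition~\ref{lb5}, show the limit is a local minimizer, and then verify each clause of Definition~\ref{def2}---is exactly the paper's approach, and most of your outline (local minimality, monotonicity, the Rankine--Hugoniot jump via Theorem~\ref{lb3}, the upstream/downstream/slot asymptotics, the barrier bounds in~(7)) matches the paper's six-step proof.

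There is, however, a genuine gap in your treatment of the smooth-fit condition~\eqref{b2}. You write that it ``is read off from the analyticity of $\Gamma_{\ld_L}$ together with the sign of $\ld_L$,'' but Theorem~\ref{lb3} gives analyticity only at \emph{interior} free boundary points, i.e.\ for $x>0$; it says nothing about the endpoint $A=(0,0)$ where $\Gamma_{\ld_L}$ detaches from the fixed wall $N_1\cup S_1$. Determining $k_L'(0+0)$ is the most substantial part of the paper's proof (their Step~4): for $\ld_L\neq0$ one blows up $\psi_{\ld_L,L}$ at $A$, uses non-degeneracy (Theorem~3.1 in~\cite{ACF4}) to ensure the blow-up limit $\psi_0$ is nontrivial, invokes the monotonicity formula and the classification of homogeneous solutions (Lemmas~5.1 and~6.6 in~\cite{ACF4}) together with the positive-density wedge $\{\psi\equiv0\}$ at $A$ to force $\psi_0$ to be a one-plane solution, then shows $\psi_0$ is itself a local minimizer and reads off the coefficient $\alpha=\sqrt{|\ld_L|}$ from the free boundary relation. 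For $\ld_L=0$ one instead uses a conformal map to straighten the corner and a reflection argument. None of this follows from interior analyticity.

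A smaller point: from the approximants you only inherit that $k_L$ is \emph{nondecreasing}; the paper upgrades this to \emph{strictly} increasing by a separate Hopf-lemma argument on both phases along a hypothetical flat segment of $\Gamma_{\ld_L}$, deriving a contradiction with the constancy of $|\g\psi^-|^2-|\g\psi^+|^2$. You should not expect strictness to survive the limit automatically. Also note that the positivity of $u_\pm,v_\pm$ is stated and proved separately in the paper (Lemma~\ref{lb7}), not inside Theorem~\ref{lb6}.
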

\begin{proof} Since $\psi_{\ld_{L,\mu},L,\mu}$ is Lipschitz continuous in $\O_{L,\mu}$ and $|\ld_{L,\mu}|\leq C$( the constant $C>0$ is independent of $\mu$),
 we can take a sequence $\{\mu_n\}$ with $\mu_n\rightarrow+\infty$, then there exist a $\ld_L$ and a $\psi_{\ld_L,L}\in H_{loc}^1(\O_L)$, such that $$\ld_{L,\mu_n}\rightarrow\ld_L,$$ and
$$\psi_{\ld_{L,\mu_n},L,\mu_n}\rightarrow\psi_{\ld_L,L}\ \text{in
$H^1_{loc}(\O_L)$ and uniformly in any compact subset of $\O_L$},$$
as $\mu_n\ra+\infty$.

Next, we divide six steps to verify that
$(\psi_{\ld_L},\Gamma_{\ld_L})$ satisfies the conditions in
Definition \ref{def2}.

{\bf Step 1.} For any $Q>0$ and $L>b$, denote  $$D_{1,L}=\O_{L}\cap\{(x,y)\mid
x<0,y<0\}\ \ \text{and}\ \ D_{2,L}=\O_{L}\cap\{(x,y)\mid
x<0, y>0\}.$$
By virtue of Lemma 6.2 in \cite{ACF4}, we can show
that $\psi_{\ld_L,L}$ is a local minimizer to the variational
problem $(P_{\ld_L,L})$, namely,
$$P_{\ld_L,L}:\ \ J_{D}(\psi_{\ld_L,L})=\min J_{D}(\psi)\ \ \text{for any $\psi\in K_L$ and
$\psi=\psi_{\ld_L,L}$ on $\p D$},$$ where
$$J_{D}(\psi)=\int_{D}\left|\nabla\psi-(\ld_{1,L}I_{\{\psi<0\}}+\ld_{2,L}I_{\{\psi>0\}}+\ld_{0,L}I_{\{\psi=0\}})I_{\{x>0\}}e\right|^2dxdy$$ and $$\ba{rl}K_{L}=\{\psi\mid &\psi\in H^1_{loc}(\O_{L}),\ -Q\leq\psi\leq L\ \text{a.e. in $\O_{L}$},\ \ 0\leq\psi\leq L\
\text{a.e. in $D_{2,L}$},\\
&-Q\leq\psi\leq 0\ \text{a.e. in $D_{1,L}$},\ \psi=-Q\ \text{on
$S_{2}\cup N_2$},\\
&\ \psi=L\ \text{on $N_{L}$},\
\psi=0\ \text{on
$N_{1}\cup S_1$}\}.\ea$$
for any bounded domain $D\subset \O_L$. Therefore, the conditions
(1) and (2) in Definition \ref{def2} have be verified.

{\bf Step 2.} We can conclude that $\psi_{\ld_L,L}(x,y)$ is monotone
increasing with respect to $y$ and decreasing with respect to $x$,
which follows from the monotonicity of
$\psi_{\ld_{L,\mu},L,\mu}(x,y)$. Furthermore, the free boundary
$$\Gamma_{\ld_L,L}= \O_L\cap\{x>0\}\cap\{\psi_{\ld_L,L}=0\}$$  of the minimizer $\psi_{\ld_L,L}$ is
given by a continuous function $y=k_{\ld_L,L}(x)$ for any $x>0$. In
particular, $k_{\ld_L,L}(0)=0$. In view of Proposition \ref{lb5},
one has \be\label{b10}\text{$k_{\ld_{L},L}(x)\ra h_{L}$ and
$\psi_{\ld_{L},L}(x,y)\ra\psi_0(y)$}\ee in any compact subset of
$(b,L)$ as $x\ra+\infty$, where  $h_{L}$ is determined uniquely by
$$\ld_{L}=\f{Q^2}{(h_{L}-b)^2}-\f{L^2}{(L-h_{L})^2}\ \ \text{and}\ \ \psi_0(x,y)=\left\{\ba{ll} \f{Q(y-h_{L})}{h_{L}-b},\
&\text{if $b<y<h_{L}$},\\
\f{L(y-h_{L})}{L-h_{L}},\ &\text{if $h_{L}<y<L$}.\ea\right.$$
Furthermore, by virtue of the analyticity of the free boundary
$\Gamma_{\ld_L,L}$ and \eqref{b10}, one has
$$k'_{\ld_L,L}(x)\ra 0\ \ \text{as}\ \ x\ra+\infty.$$ Next, we will
show that $k_{\ld_L,L}(x)$ is strictly monotone increasing with
respect to $x>0$. If not, there exist $x_1,x_2\in(0,+\infty)$ with
$x_1<x_2$, such that $k_{\ld_L,L}(x_1)=k_{\ld_L,L}(x_2)$. The
monotonicity of $\psi_{\ld_L,L}(x,y)$ with respect to $x$ and $y$
gives that there exists a small $r>0$, such that
$$\text{$\psi_{\ld_L,L}>0$ in $B_r(X_0)\cap\{y>y_0\}$ and
$\psi_{\ld_L,L}<0$ in $B_r(X_0)\cap\{y<y_0\}$,}$$ where
$X_0=(x_0,y_0)$ with $x_0=\f{x_1+x_2}2$ and $y_0=k_{\ld_L,L}(x_1)$.
Denote $I_0=\{(x,y_0)\mid -r<x-x_0<r\}$,
$B^+=B_{2r}(X_0)\cap\{y>y_0\}$ and $B^-=B_{2r}(X_0)\cap\{y<y_0\}$.
Set $\psi_1=\psi_{\ld_L,L}$ in $B^-$ and $\psi_2=\psi_{\ld_L,L}$ in
$B^+$. Since $\psi_{\ld_L,L}(x,y)$ is decreasing with respect to
$x$, the strong maximum principle gives that
$$\p_x\psi_1<0\ \ \text{in $B^-$ and }\ \ \p_x\psi_2<0\ \ \text{in $B^+$}.$$ In view of that $\p_x\psi_1=\p_x\psi_2=0$ on $I_0$,
it follows from Hopf's lemma that \be\label{bb11}\f{\p}{\p
y}\p_x\psi_1>0\ \ \text{and }\ \ \f{\p}{\p y}\p_x\psi_2<0\ \
\text{on $I_0$}.\ee Noting that $\psi_1=\psi_2=0$ on $I_0$, thanks
hopf's lemma, one has
$$\p_y\psi_1>0\ \ \text{and }\ \ \p_y\psi_2>0\ \ \text{on $I_0$},$$
which together with \eqref{bb11} give that
$$\f{\p(\p_{y}\psi_1)^2}{\p
x}>0\ \ \text{and }\ \ \f{\p(\p_y\psi_2)^2}{\p x}<0\ \ \text{on
$I_0$}.
$$ Thus one has
$$\f{\p}{\p
x}\left(|\g\psi^-_{\ld_L,L}|^2-|\g\psi^+_{\ld_L,L}|^2\right)=\f{\p}{\p
x}\left((\p_{y}\psi_1)^2-(\p_{y}\psi_2)^2\right)>0\ \ \text{on
$I_0$},
$$ which contradicts to the fact $|\g\psi^-_{\ld_L,L}|^2-|\g\psi^+_{\ld_L,L}|^2=\ld_L$
on $I_0$.

{\bf Step 3}. It follows from \eqref{b0} that
$$\ba{rl}\lim_{\e\rightarrow 0^+,\delta\rightarrow
0^+}&\left(\int_{\O_{L}\cap\{x>0\}\cap\p\{\psi_{\ld_L,L}>\e\}}(|\g\psi_{\ld_L,L}|^2-\ld_{2,L}^2)\eta\cdot\nu
dS\right.
\\
&\left.+\int_{\O_{L}\cap\{x>0\}\cap\p\{\psi_{\ld_L,L}<-\delta\}}(|\g\psi_{\ld_L,L}|^2-\ld_{1,L}^2)\eta\cdot\nu
dS\right)=0.\ea$$ Since the free boundary $\Gamma_{\ld_L,L}$ is
analytic, $\psi_{\ld_L,L}$ is $C^{2}$ up to the free boundary
$\Gamma_{\ld_L,L}$. Thus, the condition (4) in Definition \ref{def2}
holds.

{\bf Step 4.} In this step, we will show that the free boundary
$\Gamma_{\ld_L,L}$ is continuous differentiable at $A$, and
\eqref{b2} holds.

For any $r>0$, define a blow-up sequence $\{\psi_n\}$, such that
$\psi_n(\t X)=\f{\psi_{\ld_L,L}(r_n\t X)}{r_n}$ in $B_{r}(0)$ with
$\t X=(\t x,\t y)$ and $r_n\rightarrow 0$. We next consider the
following three cases.

{\bf Case 1}. $\ld_L<0$. Taking a sequence $\{X_n\}$ with
$X_n\in\Gamma_{\ld_L,L}$ and $X_n\rightarrow 0$, set $r_n=|X_n|$. By
virtue of the non-degeneracy Theorem 3.1 in \cite{ACF4}, one has
\be\label{b11}\f1{r_n}\fint_{\p
B_{\f{r_n}2}(X_n)}\psi_{\ld_L,L}^+(X)dS_X\geq c|\ld_L|^{\f12},\ee
where $c>0$ is a constant independent of $n$. Denote $\t
Y_n=\f{X_n}{r_n}$, it follows from \eqref{b11} that
\be\label{b12}\fint_{\p B_{\f12}(\t Y_n)}\psi_n^+(\t X)dS_{\t X}\geq
c|\ld_L|^{\f12}\ \ \text{and}\ \ |\t Y_n|=1.\ee

It follows from the similar arguments in Pages 444-445 in
\cite{ACF4} that there exist a subsequence $\{\psi_n\}$ and a
blow-up limit $\psi_0\in H_{loc}^1(\mathbb{R}^2)$, such that
\be\label{b13}\psi_n(\t X)\rightarrow\psi_0(\t X)\ \text{uniformly
in bounded sets, and $\g\psi_n\rightarrow\g\psi_0$ a.e. in
$\mathbb{R}^2$}.\ee Furthermore,
\be\label{b14}\p\{\psi_n>0\}\rightarrow\p\{\psi_0>0\}\ \text{locally
in the Hausdorff metric}.\ee In particular, $0$ is the free boundary
point of $\psi_0$.

In view of \eqref{b12} and \eqref{b13}, there exist two subsequences
$\{\t Y_n\}$ and $\{\psi_n\}$, such that $\t Y_n\rightarrow \t Y_0$
and \be\label{b15}\fint_{\p B_{\f12}(\t Y_0)}\psi_0^+(\t X)dS_{\t
X}\geq c|\ld_L|^{\f12}\ \ \text{and}\ \ |\t Y_0|=1.\ee

 By using the monotonicity
formula lemma 5.1 in \cite{ACF4}, one has
$$\f1{r^4}\int_{B_r(\t Y_0)}|\g\psi_0^+(\t X)|^2d\t X\cdot\int_{B_r(\t Y_0)}|\g\psi_0^-(\t X)|^2d\t X=\gamma\geq
0,$$ for any $r>0$, which together with Lemma 6.6 in \cite{ACF4}
gives that \be\label{b16}\text{$\psi_0$ is either a 2-plane
solution, or a 1-plane solution, or identically zero.}\ee

Since $\th>0$, one has \be\label{b17}\psi_{\ld_L,L}\equiv 0\ \
\text{in $S$ and}\ \f{\mathcal{L}^2(B_r(A)\cap
S)}{\mathcal{L}^2(B_r(A))}=\f{\th}{2\pi}>0,\ee for any $r>0$, where
$S=\{(x,y)\mid x\leq y\cot\th, y\leq 0\}$. By virtue of \eqref{b16}
and \eqref{b17}, one has
$$\text{$\psi_0$ is either a 1-plane solution, or identically
zero,}$$ which together with \eqref{b15} gives that
\be\label{b18}\psi_0\geq 0\ \ \text{and}\ \ \ \psi_0\not\equiv 0.\ee

Since $\psi_{\ld_L,L}(X)\leq 0$ in $\{(x,y)\mid y\leq 0\}$ and
$\psi_{\ld_L,L}(X)>0$ in $\{(x,y)\mid x<0, y>0\}$, by virtue of
\eqref{b13} and \eqref{b18}, we have \be\label{b19}\psi_0(\t x,\t
y)\equiv 0\ \ \text{in $\{(\t x,\t y)\mid \t y\leq 0\}$},\ee and
\be\label{b20}\psi_0(\t x,\t y)> 0\ \ \text{in $\{(\t x,\t y)\mid \t
x<0, \t y>0\}$}.\ee

Thus, it follows from \eqref{b20} that \be\label{b21}\psi_0(\t x,\t
y)=\max\{\alpha\t y,0\}\ \ \text{and}\ \ \alpha>0.\ee

By virtue of the proof of Lemma 6.2 in \cite{ACF4}, we can conclude
that $\psi_0$ is a local minimizer for the variational problem
$$J_R(\psi_0)=\min J_R(\psi)\ \ \text{for any $\psi-\psi_0\in
H_0^1(B_R)$ and $R>0$},$$ where $B_R=B_R(0)$ and the functional
$$J_R(\psi)=\int_{B_R}\left|\nabla\psi-(\ld_{1,L}I_{\{\psi\leq0\}}+\ld_{2,L}I_{\{\psi>0\}})I_{\{\t
x>0\}}e\right|^2d\t xd\t y,$$ with $\ld_{1,L}=\f{Q}{h_L-b}$ and $
\ld_{2,L}=\f{L}{L-h_L}$. Next, we claim that $\psi_0$ is a local
minimizer for the variational problem, namely,
$$J_R^0(\psi_0)=\min J_R^0(\psi)\ \ \text{for any $\psi-\psi_0\in
H_0^1(B_R)$ and $R>0$},$$ where the functional
$$J^0_R(\psi)=\int_{B_R}|\nabla\psi|^2+(\ld_{2,L}^2-\ld_{1,L}^2)I_{\{\psi>0\}}I_{\{\t
x>0\}}d\t xd\t y.$$ In fact, for any $\psi-\psi_0\in H_0^1(B_R)$,
one has
$$\ba{rl}&\int_{B_R}|\nabla\psi_0|^2-\ld_L^2I_{\{\psi_0>0\}}I_{\{\t
x>0\}}d\t xd\t
y-\int_{B_R}|\nabla\psi|^2-\ld_L^2I_{\{\psi>0\}}I_{\{\t
x>0\}}d\t xd\t y\\
=&\int_{B_R}|\nabla\psi_0|^2+(\ld^2_{1,L}I_{\{\psi_0\leq0\}}+\ld^2_{2,L}I_{\{\psi_0>0\}})I_{\{\t
x>0\}} d\t xd\t y\\
&-\int_{B_R}|\nabla\psi|^2+(\ld^2_{1,L}I_{\{\psi\leq0\}}+\ld^2_{2,L}I_{\{\psi>0\}})I_{\{\t
x>0\}} d\t xd\t y\\
=&\int_{B_R}\left|\nabla\psi_0-(\ld_{1,L}I_{\{\psi_0\leq0\}}+\ld_{2,L}I_{\{\psi_0>0\}})I_{\{\t
x>0\}}e\right|^2d\t xd\t y\\
&-\int_{B_R}\left|\nabla\psi-(\ld_{1,L}I_{\{\psi\leq0\}}+\ld_{2,L}I_{\{\psi>0\}})I_{\{\t
x>0\}}e\right|^2d\t xd\t y\\
\leq&0,\ea$$ where we have used the fact
$$\int_{B_R}\nabla\psi_0\cdot eI_{\{\psi_0\leq0\}}I_{\{\t
x>0\}} d\t xd\t y=\int_{B_R}\nabla\psi\cdot
eI_{\{\psi\leq0\}}I_{\{\t x>0\}} d\t xd\t y,$$ and $$
\int_{B_R}\nabla\psi_0\cdot eI_{\{\psi_0>0\}}I_{\{\t x>0\}} d\t xd\t
y=\int_{B_R}\nabla\psi\cdot eI_{\{\psi>0\}}I_{\{\t x>0\}} d\t xd\t
y.$$

For the minimal functional $J^0_R(\psi_0)$, it follows from Theorem
2.5 in \cite{AC1} that
$$|\g\psi_0|^2=\ld_{2,L}^2-\ld_{1,L}^2=-\ld_L\ \ \text{on the free boundary of
$\psi_0$},$$ which implies that
$$\alpha=\sqrt{-\ld_L}.$$
 Hence, one has
 $$\psi_n(\t x, \t y)\rightarrow
\sqrt{-\ld_L}\t y^+\ \ \text{as $r_n\rightarrow0$.}$$

By using the similar arguments in Lemma 11.2 in Chapter 3 in
\cite{FA1}, we can conclude that $k'_{\ld_L,L}(0+0)=0$.

{\bf Case 2}. $\ld_L>0$. Similar to Case 1, we can conclude that
$k'_{\ld_L,L}(0+0)=\tan\th$.

{\bf Case 3}. $\ld_L=0$. It is easy to check that $\psi_{\ld_L,L}$
is a harmonic function across the free boundary in $\O_L$. By using
a conformal mapping
$\t\psi(z)=\psi_{\ld_L,L}\left(z^{\f{\pi}{2\pi-\th}}\right)$ with
$z=x+iy$, such that $\t\psi(z)$ becomes a harmonic function in
$B_r(0)\cap\{\text{Im}z>0\}$, $\t\psi=0$ on
$B_r(0)\cap\p\{\text{Im}z>0\}$, where $z=x+iy$. Furthermore,
$N_1\cup S_1$ is mapped into the real axis and the free boundary
$\Gamma_{\ld_L,L}$ is mapped into a continuous arc $\gamma$
initiating at $0$. Then the harmonic function $\t\psi(z)$ has
harmonic continuation across Im$z=0$ in $B_r(0)$. It follows that
the level set $\{\t\psi=0\}$ consists arcs forming equal angles at
$A$. Since $\t\psi$ vanishes only on $\gamma$, which implies that
the continuous arc $\gamma$ must intersect $\p\{\text{Im}z>0\}$
orthogonally at $A$, namely,  $k_{\ld_L,L}'(0+0)=\tan\f{\th}2$.

Hence, the condition (5) in Definition \ref{def2} is obtained.

{\bf Step 5.} In this step, we will verify that $\psi_{\ld_L,L}$
satisfies the condition (6) in Definition \ref{def2}. The asymptotic
behavior of $\psi_{\ld_L,L}$ in downstream has been obtained in Step
2. Next, we consider the asymptotic behavior of $\psi_{\ld_L,L}$ in
upstream. Define a blow-up sequence
$\psi_n(x,y)=\psi_{\ld_L,L}(x-n,y)$ for $x<\f n2$. By using the
elliptic regularity in \cite{GT}, there exists a subsequence
$\{\psi_n\}$, such that
$$\psi_n(x,y)\rightarrow\bar\psi_0(x,y) \ \ \text{in}\ \ C^{2,\alpha}(D),$$ for any compact subset $D$ of $E=\{-\infty<x<+\infty\}\times\{0<y<L\}$, and
$$\left\{\ba{ll} &\Delta\bar\psi_0=0  \ \text{in}~~ E,\\
&\bar\psi_0(x,0)=0\ \text{and}\ \bar\psi_0(x,L)=L\ \ \text{for
$-\infty<x<+\infty$},\\
&0\leq\bar\psi_0(x,y)\leq L\ \ \text{in}~~ E. \ea\right.
$$ Then the above boundary value problem has a unique solution
$$\bar\psi_0(x,y)=y\ \ \text{in}\ \
E,$$ which implies that \be\label{b22}\psi_{\ld_L,L}(x,y)\ra y\ \
\text{for}\ \ 0<y<L,\ \ \text{as} \ \ x\ra-\infty.\ee

Denote $\t x=x\sin\th-y\cos\th$ and $\t y=y\sin\th+x\cos\th$, let
$$\t\psi(\t x, \t y)=\psi_{\ld_L,L}(\t x\sin\theta+\t y\cos\theta, \t y\sin \theta - \t x\cos \theta)$$ and $\t\psi_n(\t x, \t
y)=\t\psi(\t x,\t y-n)$. By virtue of the elliptic regularity, there
exists a subsequence $\{\t\psi_n\}$, such that
$$\t\psi_n(\t x,\t y)\rightarrow\t\psi_0(\t x,\t y)\ \ \text{in}\ \ C^{2,\alpha}(D),$$ for any compact subset $D$ of
$\t E=\{0<\t x<a\sin\th-b\cos\th\}\times\{-\infty<\t y<+\infty\}$,
and $\t\psi_0$ satisfies that $$\left\{\ba{ll} &\Delta\t\psi_0=0  \ \text{in}~~\t E,\\
&\t \psi_0(0,\t y)=0\ \text{and}\ \t\psi_0(a\sin\th-b\cos\th,\t
y)=-Q\ \ \text{for
$-\infty<\t y<+\infty$},\\
&-Q\leq\t\psi_0(\t x,\t y)\leq 0\ \ \text{in}~~\t E. \ea\right.
$$ Then one has
$$\t\psi_0(\t x,\t y)=-\f{Q}{a\sin\th-b\cos\th}\t x\ \ \text{in}\ \
\t E,$$ which implies that
\be\label{b220}\left|\psi_{\ld_L,L}(x,y)-\f{Q(y\cos\th-x\sin\th)}{a\sin\th-b\cos\th}\right|\ra
0 \ \ \text{uniformly in any compact subset of $S$,}\ee as
$y\ra-\infty$, where $S=\{(x,y)\mid
y\cot\th<x<(y-b)\cot\th+a,-\infty<y<+\infty\}$.

{\bf Step 6.} Finally, we will verify the condition (7) in
Definition \ref{def2} and complete the proof. For any $\e>0$, by
virtue of \eqref{b10} and \eqref{b22}, there exists a large
$\mu_0>0$, such that
$$\psi_{\ld_L,L}-y\leq \e\ \ \text{in $\O_L^+\cap\{x\leq-\mu_0\}$
and $\psi_{\ld_L,L}\leq y\ $ in $\ \O_L^+\cap\{x\geq\mu_0\}$},
$$ where $\O_L^+=\O_L\cap\{\psi_{\ld_L,L}>0\}$. This together with the maximum principle gives that
$$\psi_{\ld_L,L}-y\leq \e\ \  \text{in $\O_L^+\cap\{-\mu_0\leq x\leq \mu_0\}$}.$$ Therefore, we
have \be\label{b23}\psi_{\ld_L,L}-y\leq \e\ \  \text{in $\
\O_L^+$}.\ee Taking $\e\ra 0$ in \eqref{b23}, one has
$$\text{$\psi_{\ld_L,L}^+(x,y)\leq y$ in
$\O_L\cap\{y>0\}$.}$$ Similarly, we can show that
$$\text{$\psi^+_{\ld_L,L}(x,y)\geq\f{L(y-h_L)}{L-h_L}\ $ in
$\ \O_L\cap\{y>0\}$.}$$

\end{proof}

Finally, we will obtain the positivity of horizontal velocity and
vertical velocity in the following.
\begin{lemma}\label{lb7} The horizontal velocity and vertical velocity are positive in $\O_L$, namely,
$$u>0\ \text{and} \ v>0\ \ \text{in}\ \ \ \O_{L}^-\cup\Gamma_{\ld_L,L}, \ \ \ u>0\ \ \text{and} \ v>0 \ \text{in}\ \ \ \O_{L}^+\cup\Gamma_{\ld_L,L},$$
where $\O_{L}^-=\O_L\cap\{\psi_{\ld_L,L}<0\}$ and
$\O_{L}^+=\O_L\cap\{\psi_{\ld_L,L}>0\}$.
\end{lemma}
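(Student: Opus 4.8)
Throughout I would write $\psi=\psi_{\ld_L,L}$, $k=k_{\ld_L,L}$ and $\Gamma=\Gamma_{\ld_L,L}$. Since in each phase the velocity is $(u,v)=\f1{\sqrt{\rho_+}}(\p_y\psi,-\p_x\psi)$ in $\O_L^+$ and $(u,v)=\f1{\sqrt{\rho_-}}(\p_y\psi,-\p_x\psi)$ in $\O_L^-$, the assertion is equivalent to the two strict statements
$$\p_y\psi>0 \quad\text{and}\quad \p_x\psi<0 \quad\text{in}\ \ \O_L^-\cup\O_L^+\cup\Gamma.$$
From Step 2 of the proof of Theorem \ref{lb6} the non-strict versions $\p_y\psi\ge0$ and $\p_x\psi\le0$ are already known. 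Because $\psi$ is harmonic in each phase, $\p_y\psi$ and $\p_x\psi$ are harmonic in $\O_L^+$ and in $\O_L^-$ as well, so the plan is to upgrade the weak inequalities to strict ones, first in the open phases by the strong maximum principle and then up to the interface $\Gamma$ by a Hopf-lemma argument.

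In the interior I would argue as follows. If $\p_y\psi$ took the value $0$ at an interior point of $\O_L^+$ (resp. $\O_L^-$), the strong maximum principle applied to the harmonic, nonnegative function $\p_y\psi$ would force $\p_y\psi\equiv0$, i.e. $\psi$ would depend on $x$ alone; this contradicts $\psi=0$ on $N_1$ together with $\psi=L$ on $N_L$ (equivalently the upstream asymptotics $\psi\to y$). Hence $\p_y\psi>0$ in $\O_L^\pm$. The same reasoning applied to the harmonic, nonpositive function $\p_x\psi$ yields $\p_x\psi<0$ in $\O_L^\pm$: the case $\p_x\psi\equiv0$ would make $\{\psi=0\}$ a horizontal line, contradicting the strict monotonicity of $k$ established in Theorem \ref{lb6}.

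It remains to treat the interface $\Gamma$, which I expect to be the main obstacle, since the maximum principle only gives strictness in the open phases. Along $\Gamma$ one has $\psi=0$, so $\g\psi$ is normal to $\Gamma$ and points into $\{\psi>0\}$; by Theorem 7.4 in \cite{ACF4} (the expansion with $\alpha,\beta>0$) it does not degenerate, $|\g\psi^\pm|>0$ on $\Gamma$. Writing $\Gamma$ as $y=k(x)$, the upward unit normal is $(-k',1)/\sqrt{1+(k')^2}$, whose $y$-component is positive; hence $\p_y\psi=|\g\psi^\pm|/\sqrt{1+(k')^2}>0$ on $\Gamma$, which gives $u>0$ there. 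For the sign of $v$ I must rule out $k'(x_0)=0$, because on $\Gamma$ one has $\p_x\psi=-|\g\psi^\pm|\,k'/\sqrt{1+(k')^2}$.

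Suppose, for contradiction, $k'(x_0)=0$ at some $x_0>0$ and set $X_0=(x_0,k(x_0))$, where the normal is vertical and $\p_x\psi(X_0)=0$. The harmonic function $w=\p_x\psi$ is strictly negative in the interior of each phase and $\le0$ on $\Gamma$ (by the monotonicity in $x$), so $X_0$ is a boundary maximum of $w$ both for $\O_L^+$ and for $\O_L^-$; since $\Gamma$ is analytic (Theorem \ref{lb3}) there is an interior tangent ball on each side, and Hopf's lemma gives $\p_{xy}\psi^+(X_0)<0$ and $\p_{xy}\psi^-(X_0)>0$ (the outward normals being $(0,-1)$ from above and $(0,1)$ from below). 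On the other hand, differentiating the Rankine--Hugoniot relation $|\g\psi^-|^2-|\g\psi^+|^2=\ld_L$ along $\Gamma$ and evaluating at $X_0$, where $\p_x\psi=k'=0$, leaves only $2\,\p_y\psi(X_0)\big(\p_{xy}\psi^-(X_0)-\p_{xy}\psi^+(X_0)\big)=0$, and since $\p_y\psi(X_0)>0$ this forces $\p_{xy}\psi^+(X_0)=\p_{xy}\psi^-(X_0)$, contradicting the strict inequalities just obtained. This is exactly the type of contradiction used in Step 2 of Theorem \ref{lb6}. Therefore $k'>0$ on $\Gamma$, whence $\p_x\psi<0$ and $v>0$ there, completing the proof.
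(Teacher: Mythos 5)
Your interior argument (strong maximum principle applied to the harmonic functions $\p_y\psi$ and $\p_x\psi$ in each phase) and your treatment of the horizontal-tangent case on $\Gamma$ (Hopf's lemma on both sides of the interface combined with the tangential derivative of the Rankine--Hugoniot relation) are exactly the tools the paper uses; your detailed contradiction at a point with $k'(x_0)=0$ is correct, up to two harmless slips: with the paper's convention $\psi^-=-\min\{\psi,0\}$ the sign of $\p_{xy}\psi^-$ flips, and the two one-sided values of $\p_y\psi$ at $X_0$ need not coincide, so you cannot factor out a single $\p_y\psi(X_0)$ --- but the identity $\p_y\psi_1\p_{xy}\psi_1=\p_y\psi_2\p_{xy}\psi_2$ still pits a positive quantity against a negative one, so the contradiction survives.

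There is, however, a genuine gap in your proof of $u>0$ on $\Gamma$. You write the upward unit normal as $(-k',1)/\sqrt{1+(k')^2}$ and conclude $\p_y\psi=|\g\psi^\pm|/\sqrt{1+(k')^2}>0$; this presupposes that $k$ is differentiable with \emph{finite} slope at every point. That is not available at this stage: what has been established before this lemma is that $\Gamma$ is an analytic \emph{curve} given by a continuous, strictly increasing graph $y=k(x)$, and an analytic strictly increasing curve can perfectly well have a vertical tangent (e.g.\ $x=y^3$ at the origin), at which point the normal is $(\pm1,0)$ and $\p_y\psi=0$ there by the very non-degeneracy you invoke. In other words, ``$u>0$ on $\Gamma$'' is \emph{equivalent} to ``$\Gamma$ has no vertical tangent'', so your derivation assumes the conclusion. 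This is precisely the case the paper spends its effort on: it supposes $\p_y\psi(X_0)=0$ at a free boundary point, deduces that the normal there is parallel to $(1,0)$, and then runs the same Hopf-plus-differentiated-Rankine--Hugoniot contradiction with the roles of $x$ and $y$ exchanged (Hopf applied to $\p_y\psi$ from each side of the now locally vertical interface, tangential direction $(0,1)$). The fix is immediate --- you already have the right argument in hand for the horizontal-tangent case and need only repeat it, mutatis mutandis, to exclude vertical tangents --- but as written the step does not stand. Amusingly, the paper does the vertical-tangent case in detail and dismisses the horizontal one with ``similarly'', while you do the reverse, except that your shortcut for the case you skip is circular rather than a symmetric repetition.
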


\begin{proof}

Denote $\o_1(x,y)=\p_y\psi_{\ld_L,L}(x,y)$ in $\O_{L}^-$ and
$\o_2(x,y)=\p_y\psi_{\ld_L,L}(x,y)$ in $\O_{L}^+$, it is easy to
check that
$$\Delta\o_1=0\ \ \text{in}\ \ \O_{L}^-,\ \ \text{and}\ \ \Delta\o_2=0\ \ \text{in}\ \ \O_{L}^+.$$
Since $\psi_{\ld_L,L}(x,y)$ is monotone increasing with respect to
$y$, which together with the strong maximum principle gives that
$$\text{$\o_1(x,y)>0$ in $\O_{L}^-$, and $\o_2(x,y)>0$ in $\O_{L}^+$}.$$

Next, we claim that
$$\o_1(x,y)>0\ \ \text{and}\ \ \o_2(x,y)>0\ \ \text{on}\ \ \Gamma_{\ld_L,L}.$$

Suppose not, without loss of generality, we assume that there exists
an $x_0\in(0,+\infty)$, such that $\o_1(X_0)=0$ with
$X_0=(x_0,k_{\ld_L,L}(x_0))$. We consider the following two cases.

{\bf Case 1.} $\ld_L=0$. Then we have that $\psi_{\ld_L,L}$ is
harmonic in $\O_L$, the strong maximum principle gives that
$\o_1(X_0)>0$, which contradicts to our assumption.

{\bf Case 2.} $\ld_L\neq 0$. Since the free boundary
$\Gamma_{\ld_L,L}$ is analytic at $X_0$, $\o_1(X_0)=0$ implies that
the normal vector of $\Gamma_{\ld_L,L}$ is parallel to $(1,0)$,
which implies that $\o_2(X_0)$ is also zero. Thus, it follows from
\eqref{b3} that
$|\p_x\psi_{\ld_L,L}^-(X_0)|^2-|\p_x\psi_{\ld_L,L}^+(X_0)|^2=\ld_L$.

Without loss of generality, we assume that the outer normal vector
of $\p\{\psi_{\ld_L,L}>0\}$ at $X_0$ is $\nu=(1,0)$. Thanks to
Hopf's lemma, one has
\be\label{b24}\p_{x}\psi_{\ld_L,L}^+=\f{\p\psi_{\ld_L,L}^+}{\p\nu}<0\
\ \text{and}\ \
\p_{x}\psi_{\ld_L,L}^-=\f{\p\psi_{\ld_L,L}^-}{\p\nu}>0\ \ \text{on}\
\ X_0,\ee and
\be\label{b25}\p^2_{xy}\psi_{\ld_L,L}^+=\p_x\o_1=\f{\p\o_1}{\p\nu}<0\
\ \text{and}\ \
\p^2_{xy}\psi_{\ld_L,L}^-=\p_x\o_2=\f{\p\o_2}{\p\nu}<0\ \ \text{on}\
\ X_0.\ee

Since $|\g\psi_{\ld_L,L}^-|^2-|\g\psi_{\ld_L,L}^+|^2=\ld_L$ on the
free boundary $\Gamma_{\ld_L,L}$, one has
\be\label{b26}0=\f{\p(|\g\psi_{\ld_L,L}^-|^2-|\g\psi_{\ld_L,L}^+|^2)}{\p
s}=2(\p_{xy}\psi_{\ld_L,L}^-\p_{x}\psi_{\ld_L,L}^-
-\p_{xy}\psi_{\ld_L,L}^+\p_{x}\psi_{\ld_L,L}^+)\ \ \text{at}\ \
X_0,\ee where $s=(0,1)$ is the tangential direction of
$\Gamma_{\ld_L,L}$ at $X_0$. On the other hand,  it follows from
\eqref{b24} and \eqref{b25} that
$$\p^2_{xy}\psi_{\ld_L,L}^-\p_{x}\psi_{\ld_L,L}^-
-\p^2_{xy}\psi_{\ld_L,L}^+\p_{x}\psi_{\ld_L,L}^+<0\ \ \text{at}\ \
X_0,$$ which contradicts to \eqref{b26}.

Similarly, we can show that $$v>0\ \ \text{in}\ \ \
\O_{L}^-\cup\Gamma_{\ld_L,L}\ \ \text{and}\ \ v>0 \ \text{in}\ \ \
\O_{L}^+\cup\Gamma_{\ld_L,L}.$$

\end{proof}

\subsection{The uniqueness of the truncated injection flow problem 1}
In this subsection, we will obtain the uniqueness of the truncated
injection flow problem 1 for any given $Q>0$ and $L>b$.

\begin{lemma}\label{lb8}For any $Q>0$ and $L>b$, there exist a unique $\ld_L$ and a unique solution
$(\psi_{\ld_L,L},\Gamma_{\ld_L,L})$ to the truncated injection flow
problem 1.

\end{lemma}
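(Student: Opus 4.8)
The existence part is already furnished by Theorem \ref{lb6}, so the entire content of the statement is the uniqueness of the pair $(\ld_L,\psi_{\ld_L,L})$. The plan is to take two solutions $(\psi_1,\Gamma_1,\ld_1)$ and $(\psi_2,\Gamma_2,\ld_2)$ of the truncated injection flow problem 1 associated with the same $Q>0$ and $L>b$, first to force $\ld_1=\ld_2$, and then to deduce $\psi_1=\psi_2$ and $\Gamma_1=\Gamma_2$. The tools I would rely on are the monotonicity of each $\psi_i$ (decreasing in $x$, increasing in $y$) from Step 2 of Theorem \ref{lb6}, the analyticity of the interface from Theorem \ref{lb3}, the Rankine--Hugoniot identity \eqref{b3} on $\Gamma_i$, the strict velocity signs $u,v>0$ from Lemma \ref{lb7}, and the uniqueness of the minimizer in Proposition \ref{lb1}.

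For the first step I would argue by contradiction and suppose, without loss of generality, that $\ld_1<\ld_2$. Since the function $f(t)=\f{Q^2}{(t-b)^2}-\f{L^2}{(L-t)^2}$ is strictly decreasing on $(b,L)$ by Remark \ref{re2}, the corresponding asymptotic heights obey $h_1>h_2$. I would then establish, by a comparison argument, that the ordering $h_1>h_2$ at $x=+\infty$ propagates to a global ordering of the interfaces, namely that $\Gamma_1$ lies above $\Gamma_2$. Concretely, on the overlap of the two positive phases both functions are harmonic with the same upstream limit $\psi_i\to y$, identical data on $N_L$ and on the walls, and an ordered downstream limit coming from $h_1>h_2$; the maximum principle then orders $\psi_1$ and $\psi_2$ there, and the analogous comparison in the negative phases closes the ordering across the a priori distinct free boundaries. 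The contradiction is extracted at the leading edge $A$: both interfaces satisfy the continuous fit \eqref{b1} and the smooth fit \eqref{b2}, so the one-sided slope at $A$ is a nondecreasing function of $\ld_i$ (equal to $0$, $\tan\f\th2$, $\tan\th$ according as $\ld_i<0,=0,>0$). When $\ld_1<\ld_2$ places the two solutions in different sign-classes, the slope of $\Gamma_2$ strictly exceeds that of $\Gamma_1$, so $\Gamma_2$ rises above $\Gamma_1$ immediately past $A$, contradicting the global ordering; when they lie in the same sign-class the prescribed slopes coincide and I would instead apply Hopf's lemma to $\psi_1-\psi_2$ near $A$ to contradict the common jump structure \eqref{b3}. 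Hence $\ld_1=\ld_2$.

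Once $\ld_1=\ld_2=:\ld_L$, the two solutions satisfy the very same free boundary problem. By Step 1 of Theorem \ref{lb6} each $\psi_i$ is a local minimizer of the functional $J_{\ld_L,L}$ associated with $(P_{\ld_L,L})$, and the uniqueness of the minimizer in Proposition \ref{lb1} (passed to the limit $\mu\to+\infty$) forces $\psi_1=\psi_2$, whence $\Gamma_1=\Gamma_2$, completing the uniqueness.

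I expect the main obstacle to be the comparison step in the second paragraph. The sets $\{\psi_1>0\}$ and $\{\psi_2>0\}$ are genuinely different domains, so the maximum principle must be set up on their overlap and then transported across the two interfaces, which is where the analyticity of $\Gamma_i$ (Theorem \ref{lb3}) and the strict signs $u,v>0$ (Lemma \ref{lb7}) become essential. Turning the global ordering of the interfaces into a contradiction at the single leading edge $A$ --- reconciling the prescribed smooth-fit slopes \eqref{b2} with the Hopf-type boundary inequality coming from \eqref{b3} --- is the delicate point, and it is precisely here that the absence of the star-shapedness available in the case $b=0$ must be compensated by the monotonicity of $\psi_i$ in $x$ and $y$.
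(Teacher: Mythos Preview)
Your outline follows the same two-step structure as the paper, but both steps contain gaps relative to the argument the paper actually carries out.

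For Step 1, the direct maximum-principle comparison you propose on the overlap of the two positive phases does not close unless you already know the free boundaries are ordered. The boundary of $\{\psi_1>0\}\cap\{\psi_2>0\}$ contains pieces of both $\Gamma_1$ and $\Gamma_2$; on $\Gamma_1\cap\{\psi_2>0\}$ one has $\psi_1=0\leq\psi_2$, while on $\Gamma_2\cap\{\psi_1>0\}$ one has $\psi_2=0\leq\psi_1$, so if the interfaces cross the comparison is inconclusive --- precisely the circularity you flag in your last paragraph, but do not resolve. The paper avoids it by a vertical sliding argument: set $\psi_\e(x,y)=\psi(x,y-\e)$ and choose the smallest $\e_0\geq0$ for which $\psi_{\e_0}\leq\t\psi$ globally with a contact point $X_0$. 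If $\e_0>0$, the strong maximum principle forces $X_0$ to be a finite common free boundary point, and Hopf on both sides yields $\ld_L>\t\ld_L$, a contradiction. If $\e_0=0$, the paper first shows $\ld_L\cdot\t\ld_L>0$ via the smooth-fit slopes \eqref{b2} (your different-sign-class observation is exactly this claim, \eqref{b30}); then, since $A$ is a corner of the fixed boundary where Hopf's lemma does not apply directly, it performs a blow-up of both solutions at $A$ to obtain $\sqrt{-\t\ld_L}\geq\sqrt{-\ld_L}$ and reach the contradiction.

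For Step 2 there is a more serious gap. A solution to the truncated injection flow problem 1 is characterised by the properties (1)--(7) of Definition \ref{def2}; nothing there says the solution is a minimizer. Theorem \ref{lb6} only shows that the particular solution it constructs --- as a limit of the $\psi_{\ld_{L,\mu},L,\mu}$ --- is a local minimizer; a second hypothetical solution need not arise this way, so you cannot invoke Proposition \ref{lb1} (which in any case concerns global minimizers in the bounded $\O_{L,\mu}$, not local minimizers in $\O_L$). The paper instead reuses the sliding argument: once $\ld_L=\t\ld_L$ the two free boundaries share the same asymptotic height, and if they differ at some finite $x_0$ the smallest $\e_0$ with $\psi_{\e_0}\leq\t\psi$ is strictly positive; the resulting finite contact point yields, via Hopf, $\ld_L>\ld_L$, which is absurd.
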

\begin{proof}

Suppose that there exist two different solutions
$\psi=\psi_{\ld_L,L}$ and $\t\psi=\t\psi_{\t\ld_L,L}$. We divide two
steps to complete the proof.

{\bf Step 1.} First, we show that
$$\ld_L=\t\ld_L.$$
Suppose not, without loss of generality, we assume that
$\ld_L<\t\ld_L$. Noting that
$$\ld_L=\f{Q^2}{(h_L-b)^2}-\f{L^2}{(L-h_L)^2}\ \ \text{and}\ \ \t\ld_L=\f{Q^2}{(\t h_L-b)^2}-\f{L^2}{(L-\t h_L)^2},$$ this together with Remark \ref{re2} implies that
\be\label{b27}\lim_{x\ra+\infty}k_{\ld_L,L}(x)=h_L>\t
h_L=\lim_{x\ra+\infty}\t k_{\t\ld_L,L}(x).\ee Then,
\be\label{b28}k_{\ld_L,L}(x)>\t k_{\t\ld_L,L}(x)\ \ \text{for
sufficiently large $x>0$}.\ee

Define a function $\psi_\e(x,y)=\psi(x,y-\e)$ for any $\e\geq0$, and
$\Gamma_{\ld_L,L}^\e:y=k_{\ld_L,L}(x)+\e$ as the free boundary of
$\psi_\e$. Take $\e_0\geq0$ to be the smallest one such that
\be\label{b29}\psi_{\e_0}(X)\leq\t\psi(X)\ \ \text{in $\O_L$ and
$\psi_{\e_0}(X_0)=\t\psi(X_0)$ for some $X_0\in\bar\O_L$.}\ee

We consider the following two cases for $\e_0$.

{\bf Case 1.} $\e_0>0$. The strong maximum principle gives that
$X_0\notin\O_L\cap(\{\t\psi<0\}\cup\{\psi_{\e_0}>0\})$. Suppose not,
without loss of generality, we assume that there exists
$X_0\in\O_L\cap\{\t\psi<0\}$, such that
$-Q<\psi_{\e_0}(X_0)=\t\psi(X_0)<0$. The continuity of $\psi_{\e_0}$
and $\t\psi$ implies that there exists a small $r>0$, such that
$$-Q<\psi_{\e_0}(X)<0\ \ \text{and}\ \ -Q<\t\psi(X)<0\ \ \text{in
$B_r(X_0)$}.$$ Since $\psi_{\e_0}$ and $\t\psi$ are harmonic in
$B_r(X_0)$, the strong maximum principle gives that
$\psi_{\e_0}\equiv\t\psi$ in $B_r(X_0)$, due to
$\psi_{\e_0}(X_0)=\t\psi(X_0)$. Applying the strong maximum
principle again, we can obtain a contradiction to the boundary value
of $\t\psi$.

Since $\e_0>0$, it follows from \eqref{b28} that $|X_0|<+\infty$.
Therefore, choose $X_0$ to be a free boundary point of $\psi_{\e_0}$
and $\t\psi$, and one has $\psi_{\e_0}(X_0)=\t\psi(X_0)=0$. In view
of \eqref{b29}, the strong maximum principle gives that
$$\psi_{\e_0}<\t\psi\ \ \text{in}\ \ \O_L\cap\{\t\psi<0\}\ \ \text{and}\ \ \psi_{\e_0}<\t\psi\ \ \text{in}\ \ \O_L\cap\{\psi_{\e_0}>0\}.$$
Since the free boundaries $\Gamma_{\ld_L,L}^{\e_0}$ and
$\t\Gamma_{\t\ld_L,L}$ are analytic at $X_0$, thanks to Hopf's
lemma, one has
$$|\g\psi_{\e_0}^-|=-\f{\p\psi^-_{\e_0}}{\p\nu}>-\f{\p\t\psi^-}{\p\nu}=|\g\t\psi^-|\ \ \text{and}\ \
|\g\psi_{\e_0}^+|=\f{\p\psi^+_{\e_0}}{\p\nu}<\f{\p\t\psi^+}{\p\nu}=|\g\t\psi^+|\
\ \text{at}\ \ X_0,$$ where $\nu$ is the inner normal vector to
$\p\{\t\psi>0\}$ at $X_0$. Those give that
$$\ld_L=|\g\psi_{\e_0}^-|^2-|\g\psi_{\e_0}^+|^2>|\g\t\psi^-|^2-|\g\t\psi^+|^2=\t\ld_L \ \ \text{at}\ \ X_0,$$
which contradicts to our assumption.

{\bf Case 2.} $\e_0=0$. We first claim that
\be\label{b30}\ld_L\cdot\t\ld_L>0.\ee Suppose not, if
$\t\ld_L>0\geq\ld_L$, by virtue of \eqref{b2}, one has
$$\t k_{\ld_L,L}'(0+0)=\tan\th>\tan\f\th2\geq k'_{\t\ld_L,L}(0+0),$$ which implies that $k_{\ld_L,L}(x)<\t
k_{\t\ld_L,L}(x)$ for small $x>0$. This leads a contradiction to the
fact $\psi\geq\t\psi$ in $\O$. Similarly, we can obtain a
contradiction if $\t\ld_L=0>\ld_L$.

In view of the claim \eqref{b30}, without loss of generality, we
assume that $0>\t\ld_L>\ld_L$ and take $X_0=A$. Define two blow-up
sequences $\{\psi_n\}$ and $\{\t\psi_n\}$ with $\psi_n(\t
X)=\f{\psi(r_n\t X)}{r_n}$ and $\t\psi_n(\t X)=\f{\t\psi(r_n\t
X)}{r_n}$. Let $\psi_0$ and $\t\psi_0$ be the blow-up limits of
$\psi_n$ and $\t\psi_n$ as $r_n\ra0$, respectively.

It follows from the similar arguments in Step 3 in the proof of
Theorem \ref{lb6} that $\psi_0$ and $\t\psi_0$ satisfy that
\be\label{b31}\psi_0(\t x,\t y)=\max\{\alpha \t y,0\},\ \ \
\alpha>0\ \ \text{and}\ \ \alpha^2=-\ld_L, \ee and
\be\label{b32}\t\psi_0(\t x,\t y)=\max\{\t\alpha \t y,0\},\ \
\t\alpha>0\ \ \text{and}\ \ \t\alpha^2=-\t\ld_L. \ee

The fact \eqref{b29} implies that
$$\t\psi_0\geq\psi_0\ \ \text{in
$B_1(0)\cap\{\t\psi_0>0\}$},$$ which together with \eqref{b31} and
\eqref{b32} implies that
$$\sqrt{-\t\ld_L}=\f{\p\t\psi_0}{\p\nu}\geq\f{\p\psi_0}{\p\nu}=\sqrt{-\ld_L}\ \ \text{at}\ \
0,$$ where $\nu=(0,1)$ is inner normal vector. This contradicts to
our assumption $0>\t\ld_L>\ld_L$.

{\bf Step 2.} In this step, we will show that $\psi=\t\psi$. It
follows from the asymptotic behavior of $\psi$ and $\t\psi$ that
$$\lim_{x\rightarrow+\infty}k_{\ld_L,L}(x)=\lim_{x\rightarrow+\infty}\t k_{\ld_L,L}(x).$$
Without loss of generality, we assume that there exists $x_0>0$,
such that \be\label{b33}k_{\ld_L,L}(x_0)<\t k_{\ld_L,L}(x_0).\ee

Consider a function $\psi_{\e}(x,y)=\psi(x,y-\e)$ for $\e\geq0$, and
choosing the smallest $\e_0\geq0$ such that
$$\psi_{\e_0}(X)\leq\t\psi(X)\ \text{in $\O$, and}\ \psi_{\e_0}(X_0)=\t\psi(X_0)\ \text{for some $X_0\in\bar{\O}$}.$$
It follows from \eqref{b33} that $\e_0>0$, which implies that
$|X_0|<+\infty$. By using the similar arguments in Step 1,  we can
let $X_0$ be the free boundary point of $\psi_{\e_0}$ and $\t\psi$.
Applying Hopf's lemma at $X_0$, one has
$$\ld_L=|\g\psi_{\e_0}^-|^2-|\g\psi_{\e_0}^+|^2>|\g\t\psi^-|^2-|\g\t\psi^+|^2=\ld_L\ \ \text{at}\ \ X_0,$$ which is impossible.

Hence, we obtain the uniqueness of the solution to the truncated
injection flow problem 1 for any $L>b$.
\end{proof}

\subsection{The relation between $\ld_L$ and $Q$} In Lemma
\ref{lb8}, the uniqueness of $\ld_L$ is obtained for any $Q>0$ and
$L>b$. Then we can denote $\ld_L=\ld_L(Q)$ for any $Q>0$ and $L>b$.
We investigate the relation between $\ld_L(Q)$ and $Q$ for any fixed
$L>b$, and show that $\ld_L(Q)$ is strictly monotone increasing and
continuous with respect to $Q>0$ for any $L>b$.

\begin{lemma}\label{lb9}For any $L>b$, $\ld_L(Q)$ is strictly monotone increasing and continuous with respect to $Q$.

\end{lemma}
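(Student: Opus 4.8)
The plan is to establish the monotonicity by a comparison/sliding argument that mirrors the uniqueness proof of Lemma \ref{lb8}, and to deduce the continuity from a compactness argument combined with that same uniqueness.

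For the monotonicity, fix $L>b$ and take $0<Q_1<Q_2$; let $(\psi_i,\Gamma_i,\ld_i)$, $i=1,2$, be the corresponding solutions supplied by Lemma \ref{lb8}, with interfaces $y=k_i(x)$ and asymptotic heights $h_i$. Each $\psi_i$ is harmonic off $\Gamma_i$ and, by \eqref{b3} and the analyticity of $\Gamma_i$ (Theorem \ref{lb3}), satisfies the pointwise relation $|\g\psi_i^-|^2-|\g\psi_i^+|^2=\ld_i$ on $\Gamma_i$. The Dirichlet data are ordered, since $\psi_1=\psi_2$ on $N_1\cup S_1\cup N_L$ while $\psi_1=-Q_1>-Q_2=\psi_2$ on $N_2\cup S_2$, and both tend to $y$ as $x\ra-\infty$. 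I would then introduce the vertical translates $\psi_2^{\e}(x,y)=\psi_2(x,y-\e)$, each of which is still harmonic off its interface $y=k_2(x)+\e$ and still obeys the same jump relation with constant $\ld_2$, translation invariance being the crucial point. Let $\e_0\ge0$ be the smallest shift for which $\psi_2^{\e_0}\le\psi_1$ throughout $\ol$, with a contact point $X_0$; note that sliding $\psi_2$ upward only decreases it, so such an $\e_0$ exists and is finite. The ordered boundary data together with the upstream and downstream asymptotics of condition (6) exclude the contact from the walls $N_1\cup S_1$, $N_2\cup S_2$, $N_L$ and from infinity, and the strong maximum principle excludes it from the open phases, exactly as in Lemma \ref{lb8}.

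Two cases then remain, as in Lemma \ref{lb8}. If $\e_0>0$, the translated interface $y=k_2(x)+\e_0$ starts strictly above the leading edge $A$, so $X_0$ is an interior common free boundary point of $\psi_2^{\e_0}$ and $\psi_1$ where both interfaces are analytic; with $\psi_2^{\e_0}\le\psi_1$ and $\psi_2^{\e_0}(X_0)=\psi_1(X_0)=0$, Hopf's lemma applied on each phase yields $|\g(\psi_2^{\e_0})^-|\ge|\g\psi_1^-|$ and $|\g(\psi_2^{\e_0})^+|\le|\g\psi_1^+|$ at $X_0$, whence $\ld_2\ge\ld_1$. If instead $\e_0=0$, so that $\psi_2\le\psi_1$ and the relevant contact is at $A$, I would pass to the blow-up limits at $A$ classified in Step 4 of the proof of Theorem \ref{lb6}; the inequality $\psi_2\le\psi_1$ survives in the limit and forces the ordering of the corresponding homogeneous profiles (for instance $\sqrt{-\ld_2}\le\sqrt{-\ld_1}$ in the regime $\ld_i<0$, where the profile is $\sqrt{-\ld_i}\,\tilde y^+$), giving again $\ld_2\ge\ld_1$. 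In both cases $\ld_L(Q_2)\ge\ld_L(Q_1)$, and I would upgrade this to a strict inequality by ruling out $\ld_1=\ld_2$: since $Q_1\neq Q_2$ forces $h_1\neq h_2$ through Remark \ref{re2} and hence $\psi_1\not\equiv\psi_2$, the strong maximum principle and Hopf's lemma applied to $\psi_1-\psi_2\ge0$ in the common positive phase near $A$ make the profile comparison strict, contradicting $\ld_1=\ld_2$.

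For the continuity, fix $Q_0>0$ and let $Q_n\ra Q_0$. By Proposition \ref{lb5} the solutions are uniformly Lipschitz on compact subsets of $\bar\ol\setminus B_\e(B)$ and $|\ld_L(Q_n)|\le C$ uniformly, so along a subsequence $\ld_L(Q_n)\ra\ld_*$ (the full limit existing by the monotonicity just proved) and $\psi_{Q_n}\ra\psi_*$ in $H^1_{loc}(\ol)$ and locally uniformly. Using the Hausdorff convergence of the free boundaries and the non-degeneracy estimates of \cite{ACF4}, one checks that the limit $(\psi_*,\Gamma_*,\ld_*)$ satisfies every condition of Definition \ref{def2} for the flux $Q_0$, in particular the jump relation on $\Gamma_*$, the continuous fit $k_*(0)=0$, and the asymptotic profiles. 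By the uniqueness in Lemma \ref{lb8}, $\psi_*=\psi_{Q_0}$ and $\ld_*=\ld_L(Q_0)$; as the limit is independent of the subsequence, $\ld_L(Q_n)\ra\ld_L(Q_0)$, which is the desired continuity. I expect the main obstacle to lie in the contact analysis of the sliding step — especially the leading-edge case $\e_0=0$ and the upgrade to a strict inequality — and, correspondingly, in ensuring that the jump relation and the continuous fit persist in the continuity limit; both rest on the leading-edge blow-up classification and on the uniform non-degeneracy of the minimizers.
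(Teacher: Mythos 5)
Your overall strategy is the paper's: a sliding/comparison argument with Hopf's lemma at an interior contact point, a blow-up at the leading edge $A$ when the contact degenerates to $\e_0=0$, and continuity via compactness plus the uniqueness of Lemma \ref{lb8}. The continuity half is essentially identical to the paper and is fine. However, the monotonicity half has two genuine gaps, both located exactly where you yourself flag the difficulty, namely in the leading-edge case and in the upgrade to strict inequality.

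First, your strictness mechanism fails outright when $\ld_L(Q_1)=\ld_L(Q_2)=0$. In that case $\psi_1$ and $\psi_2$ are harmonic across their free boundaries, the jump relation is vacuous, and the blow-up profile at $A$ (which meets $N_1\cup S_1$ at angle $\tan\f\th2$ by \eqref{b2}) carries no information about $\ld$ whatsoever; no "profile comparison" can distinguish the two solutions or produce a contradiction from $h_1\neq h_2$. The paper disposes of this case by a completely separate, explicit computation: the conformal-mapping identity \eqref{b34} from Milne-Thomson shows that $\ld_L=0$ pins down $Q$ uniquely, so $Q_1=Q_2$, a contradiction. Your argument has no substitute for this. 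Second, even in the regime where both constants are negative, "the strong maximum principle and Hopf's lemma applied to $\psi_1-\psi_2\ge 0$ near $A$" is not a valid step: $A$ is a corner where the free boundary meets the fixed wall, the classical Hopf lemma does not apply there, and the naive blow-up comparison only yields the non-strict $\sqrt{-\ld_2}\le\sqrt{-\ld_1}$. The paper's device is to first prove a quantitative separation $(1+\delta)\psi_1\le\psi_2$ on a small ball $B_r(0)$ (as in \eqref{b37}), obtained by applying Hopf on the smooth portion $N_1\cap\{x<0\}$ of the fixed boundary together with the strict ordering \eqref{b35} of the free boundaries for $x>0$; the factor $(1+\delta)$ survives the blow-up and converts the slope comparison into the strict inequality $\sqrt{-\ld_L(Q_2)}\ge(1+\delta)\sqrt{-\ld_L(Q_1)}$. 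A smaller but real issue is your direct (non-contradiction) setup: without first assuming $\ld_L(Q_1)\le\ld_L(Q_2)$ you have no a priori ordering of the asymptotic heights $h_1,h_2$ from Remark \ref{re2}, so you cannot guarantee that the sliding produces a contact point at finite distance rather than only "at infinity"; the paper's contradiction hypothesis is what yields $h_{1,L}>h_{2,L}$ and hence a definite gap between the free boundaries downstream. You also still need the sign dichotomy $\ld_L(Q_1)\cdot\ld_L(Q_2)>0$ (the analogue of \eqref{b30}, proved via the smooth fit condition \eqref{b2}) before the blow-up profiles at $A$ can even be compared.
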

\begin{proof}
For any $Q_1>Q_2>0$, there exist a unique $\ld_L(Q_1)$ and a unique
$\ld_L(Q_2)$, such that
$(\psi_{\ld_L(Q_1),L},\Gamma_{\ld_L(Q_1),L})$ and
$(\psi_{\ld_L(Q_2),L},\Gamma_{\ld_L(Q_1),L})$ are the solutions to
the truncated injection flow problem 1. We next show that
$$\ld_L(Q_1)>\ld_L(Q_2)\ \ \text{for any $Q_1>Q_2>0$}.$$
 If not, suppose that there exist $Q_1>Q_2>0$, such that
 $\ld_L(Q_1)\leq\ld_L(Q_2)$. Denote $\psi_1=\psi_{\ld_L(Q_1),L}$ and $\psi_2=\psi_{\ld_L(Q_2),L}$ for simplicity. We next consider the following two
 cases.

 {\bf Case 1.} $\ld_L(Q_1)=\ld_L(Q_2)=0$. Then $\psi_1$ and $\psi_2$ are harmonic functions in $\O_L$.
For the harmonic function $\psi_1$ in $\O_L$, along the conformal
mapping, it follows from (7) in pp.292 in \cite{MI} that
 $$\left(\f{U_1}{U}\right)^{1-\f\pi\th}\f{d_1}{d}-\left(\f{U_2}{U}\right)^{1-\f\pi\th}\f{d_2}{d}=1,$$
 where $d=L-b$, $d_1=L$, $d_2=a\sin\th-b\cos\th$, $U=\f{L+Q_1}{d}$,
 $U_1=\f{L}{d_1}$ and $U_2=\f{Q_1}{d_2}$. Then we have
\be\label{b34}d_2^{\f\pi\th}Q_1^{1-\f\pi\th}=L-(L-b)^{\f\pi\th}(L+Q_1)^{1-\f\pi\th}.\ee
Define
$f(Q)=d_2^{\f\pi\th}Q^{1-\f\pi\th}-L+(L-b)^{\f\pi\th}(L+Q)^{1-\f\pi\th}$
for $Q>0$, it is easy to check that $f(Q)$ is strictly monotone
decreasing with respect to $Q>0$, which implies that $Q_1>0$ is
uniquely determined by \eqref{b34}. Similarly, for the harmonic
function $\psi_2$ in $\O_L$, one has
$$d_2^{\f\pi\th}Q_2^{1-\f\pi\th}=L-(L-b)^{\f\pi\th}(L+Q_2)^{1-\f\pi\th},$$
which implies that $Q_1=Q_2$. This leads a contradiction.

  {\bf Case 2.} $\ld_L(Q_1)\neq0$ or $\ld_L(Q_2)\neq0$.

Recalling Remark \ref{re2}, there exists a unique $h_{i,L}$
($i=1,2$), such that
$$\ld_L(Q_1)=\f{Q_1^2}{(h_{1,L}-b)^2}-\f{L^2}{(L-h_{1,L})^2}\ \ \text{and}\ \ \ld_L(Q_2)=\f{Q_2^2}{(h_{2,L}-b)^2}-\f{L^2}{(L-h_{2,L})^2}.$$
Moreover, $Q_1>Q_2$ implies that
$$h_{1,L}>h_{2,L}$$ and furthermore, $$k_{\ld_L(Q_1),L}(x)>k_{\ld_L(Q_2),L}(x)\ \ \text{for sufficiently large $x>0$}.$$

Define a function $\psi_{1,\e}(x,y)=\psi_1(x,y-\e)$ for $\e\geq0$,
and let $\e_0\geq0$ be the smallest one such that
$$\psi_{1,\e_0}(X)\leq\psi_2(X)\ \ \text{in $\O_L$ and $\psi_{1,\e_0}(X_0)=\psi_2(X_0)$ for some $X_0\in\bar\O_L$.}$$

We consider the following two subcases.

{\bf Subcase 2.1.} $\e_0>0$. Similar to Case 1 in the proof of Lemma
\ref{lb8}, we can conclude that
$X_0\notin\O_L\cap(\{\psi_2<0\}\cup\{\psi_{1,\e_0}>0\})$ and
$|X_0|<+\infty$. Therefore, choose $X_0$ be a free boundary point of
$\psi_{1,\e_0}$ and $\psi_2$. Thanks to Hopf's lemma, one has
$$\text{$\ld_L(Q_1)=|\g\psi_{1,\e_0}^-|^2-|\g\psi_{1,\e_0}^+|^2>|\g\psi_2^-|^2-|\g\psi_2^+|^2=\ld_L(Q_2)$
at $X_0$,}$$ which contradicts to our assumption
$\ld_L(Q_1)\leq\ld_L(Q_2)$.

{\bf Subcase 2.2.} $\e_0=0$. Along the proof of the claim
\eqref{b30}, we have that $\ld_L(Q_1)\cdot\ld_L(Q_2)>0$. Without
loss of generality, we assume that $0>\ld_L(Q_2)\geq\ld_L(Q_1)$.
Taking $X_0=A$, the strong maximum principle gives that
$$\psi_1<\psi_2\ \ \text{in}\ \
\O\cap\{\psi_1>0\}.$$ We next show that
\be\label{b35}k_{\ld_L(Q_1),L}(x)>k_{\ld_L(Q_2),L}(x)\ \ \text{for
any $x>0$}.\ee If not, there exists an $x_1\in(0,+\infty)$, such
that $k_{\ld_L(Q_1),L}(x_1)=\t k_{\ld_L(Q_2),L}(x_1)$. Taking
$X_0=(x_1,k_{\ld_L(Q_1),L})$ as the free boundary point, we can
obtain a contradiction by using the similar arguments in Subcase
2.1,

Since $N_1$ is $C^{2,\alpha}$-smooth, by using Hopf's lemma, one has
\be\label{b36}\f{\p\psi_1}{\p\nu}<\f{\p\psi_2}{\p\nu}\ \ \text{on
$N_{1}\cap\{x<0\}$, $\nu$ is the inner normal vector of $N_{1}$}.\ee

In view of \eqref{b35} and \eqref{b36}, for small $r>0$, there
exists a small $\delta>0$, such that
$$
(1+\delta)\psi_1\leq\psi_2\ \ \text{on
$\p(B_r(0)\cap\{\psi_1>0\})$}.$$ The maximum principle gives that
\be\label{b37}(1+\delta)\psi_1\leq\psi_2\ \ \text{in
$B_r(0)\cap\{\psi_1>0\}$}.\ee Define two blow-up sequences
$\{\psi_{1,n}\}$ and $\{\psi_{2,n}\}$ with $\psi_{1,n}(\t
X)=\f{\psi_1(r_n\t X)}{r_n}$ and $\psi_{2,n}(\t X)=\f{\psi_2(r_n\t
X)}{r_n}$. Since $\psi_{1,n}$ and $\t\psi_{1,n}$ are Lipschitz
continuous, we can denote $\psi_{1,0}$ and $\psi_{2,0}$ as the
blow-up limit of $\psi_{1,n}$ and $\psi_{2,n}$, respectively.
Furthermore,
$$\psi_{1,0}(\t x,\t y)=\max\{\sqrt{-\ld_L(Q_1)}\t y,0\}\ \ \
\text{and}\ \ \psi_{2,0}(\t x,\t y)=\max\{\sqrt{-\ld_L(Q_2)} \t
y,0\}, $$ and
$$\psi_{2,0}\geq(1+\delta)\psi_{1,0}\ \ \text{in
$\{\t y>0\}$}.$$ This give that
$$\sqrt{-\t\ld_L(Q_2)}=\f{\p\psi_{2,0}}{\p\nu}\geq(1+\delta)\f{\p\psi_{1,0}}{\p\nu}=(1+\delta)\sqrt{-\ld_L(Q_1)}\ \ \text{at}\ \
0,$$ where $\nu=(0,1)$ is the inner normal vector. This contradicts
to our assumption $0>\ld_L(Q_2)\geq\ld_L(Q_1)$.

Next, we will show that $\ld_L(Q)$ is continuous with respect to
$Q$. Since $\ld_L(Q)$ is strictly monotone increasing with respect
to $Q$, it suffices to show that $$\text{$\ld_L(Q+0)=\ld_L(Q-0)$ for
any $Q>0$,}$$ where $\ld_L(Q+0)=\lim_{Q_n\rightarrow Q^+}\ld_L(Q_n)$
and $\ld_L(Q-0)=\lim_{Q_n\rightarrow Q^-}\ld_L(Q_n)$.

Suppose not, then there exists a $Q_0>0$, such that
$\ld_L(Q_0+0)>\ld_L(Q_0-0)$. For a sequence $\{Q_n\}$ with
$Q_n\downarrow Q_0$, there exist a unique $\ld_L(Q_n)$ and a unique
solution $\psi_{\ld_L(Q_n),L}$ to the truncated injection flow
problem 1. Then there exists a subsequence $\{Q_n\}$, such that
$$\ld_L(Q_n)\rightarrow\ld_L(Q_0+0),$$ and $$\psi_{\ld_L(Q_n),L}\rightarrow\psi_{\ld_L(Q_0+0),L}\ \text{in $H^1_{loc}(\O_L)$
and uniformly in any compact subset of $\O_L$}.$$ It is easy to
check that $(\psi_{\ld_L(Q_0+0),L},\Gamma_{\ld_L(Q_0+0),L})$ is a
solution to the truncated injection flow problem 1.

Similarly, there exists a solution
$(\psi_{\ld_L(Q_0-0),L},\Gamma_{\ld_L(Q_0-0),L})$ to the truncated
injection flow problem 1.

For the given $Q_0>0$, the uniqueness of $\ld_L$ and
$\psi_{\ld_L,L}$ gives that $\ld_L(Q_0+0)=\ld_L(Q_0-0)$ and
$\psi_{\ld_L(Q_0+0),L}=\psi_{\ld_L(Q_0-0),L}$, which leads a
contradiction.

\end{proof}

Next, we will obtain the upper bound and the lower bound of
$\ld_L(Q)$. It should be noted that the monotonicity of $\ld_L(Q)$
implies that the lower bound of $\ld_L(Q)$ follows from the limit
$\lim_{Q\rightarrow0}\ld_L(Q)$, which means the injection flow
vanishes. To see this, we have to investigate the one-phase flow
above a blade surface with unit velocity in upstream. In the special
case $b=0$ (see Figure \ref{f2}), the problem is so simple and the
one-phase flow for $Q=0$ is nothing but the uniform flow
$(u,v)=\f1{\sqrt{\rho_+}}(1,0)$ with free boundary
$\Gamma=\{0<x<a,y=0\}$. And it is clear that the limit
$\ld_L(Q)\rightarrow-1$ as $Q\rightarrow0$. However, at the present
situation ($b>0$), the one-phase fluid problem is unclear and
complicated, which is the one of main differences and difficulties
here. Therefore, there is an important observation that for the
limit case $Q=0$, the free boundary initiates at the leading edge
$A$ and touches the boundary $S_2$ below the trailing edge $B$ (see
Figure \ref{f11}). Based on this important observation, we will show
that the limit $\lim_{Q\rightarrow 0}\ld_L(Q)$ is not $-1$ but a
constant $\ld_L(0)>-1$. This is also a difference from the special
case $b=0$.

\begin{figure}[!h]
\includegraphics[width=100mm]{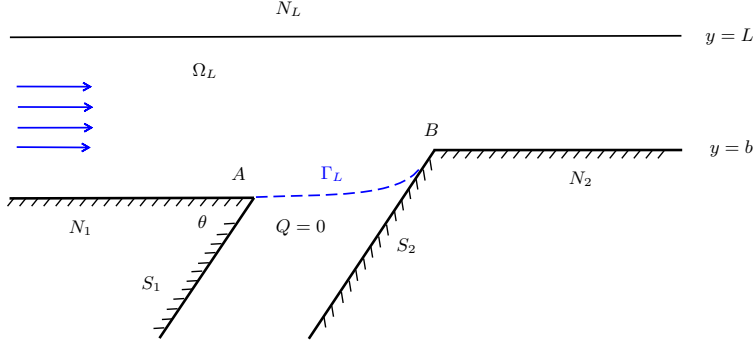}
\caption{The critical case $Q=0$}\label{f11}
\end{figure}

\begin{lemma}\label{lb10} For any $L>b$, there exist a $\ld_L(0)\in(-1,0)$ and a $\kappa_L\in(0,+\infty)$, such that $$\ld_L(Q)\rightarrow\ld_L(0)\ \ \text{as}\ \
Q\rightarrow0,$$ and $$\f{\ld_L(Q)}{Q^2}\rightarrow\kappa_L\ \
\text{as}\ \ Q\rightarrow+\infty.$$ Furthermore, if $L$ is
sufficiently large, $\kappa_L$ is a uniform constant independent of
$L$.

\end{lemma}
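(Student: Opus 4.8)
The plan is to treat the two degenerate regimes $Q\ra 0$ and $Q\ra+\infty$ separately, in each case passing to the limit in the family $\{(\psi_{\ld_L(Q),L},\Gamma_{\ld_L(Q),L})\}$ produced by Lemma \ref{lb8} and identifying a limiting \emph{one-phase} free boundary problem. Throughout I use that $\ld_L(Q)$ is continuous and strictly increasing in $Q$ (Lemma \ref{lb9}), so that $\ld_L(0):=\lim_{Q\ra 0^+}\ld_L(Q)=\inf_{Q>0}\ld_L(Q)$ exists, together with the identity $\ld_L(Q)=\ld_{1,L}^2-\ld_{2,L}^2$ where $\ld_{1,L}=\f{Q}{h_L-b}$ and $\ld_{2,L}=\f{L}{L-h_L}$. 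First I dispose of the easy half of the $Q\ra 0$ statement, $\ld_L(0)<0$: for small $Q$ the injected layer is thin, $h_L\ra b^+$, and the downstream secondary speed $\ld_{1,L}$ stays below the mainstream speed $\ld_{2,L}$, so $\ld_L(Q)<0$; strict monotonicity then gives $\ld_L(0)\le\ld_L(Q/2)<\ld_L(Q)<0$.

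For $Q\ra+\infty$ I would rescale $\Phi_Q:=\psi_{\ld_L(Q),L}/Q$, which lies in $[-1,L/Q]$, equals $-1$ on $N_2\cup S_2$, equals $0$ on $N_1\cup S_1$ and equals $L/Q\ra 0$ on $N_L$. Dividing the functional by $Q^2$ turns the two coefficients into $\ld_{1,L}/Q=\f1{h_L-b}$ and $\ld_{2,L}/Q=\f{L}{Q(L-h_L)}\ra 0$, so the mainstream part $\{\psi>0\}$ becomes negligible and $\Phi_Q$ converges (using the uniform Lipschitz bound of Proposition \ref{lb5}) to a minimizer $\Phi_\infty\in[-1,0]$ of a one-phase Bernoulli problem for the jet emanating from the slot. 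The asymptotic width of this jet, $h_{L,\infty}-b:=\lim_{Q\ra+\infty}(h_L-b)$, is finite and positive by non-degeneracy and by the saturation of a free jet when $L$ is large enough that the jet does not feel $N_L$, i.e. $b<h_{L,\infty}<L$. Reading off $\f{\ld_L(Q)}{Q^2}=\f1{(h_L-b)^2}-\f{(L/Q)^2}{(L-h_L)^2}$ and letting $Q\ra+\infty$, the second term vanishes (since $L-h_L\ra L-h_{L,\infty}>0$), whence $\f{\ld_L(Q)}{Q^2}\ra\kappa_L=\f1{(h_{L,\infty}-b)^2}\in(0,+\infty)$. The uniformity in $L$ follows because, for $L$ large, the limiting jet is a localized object near the slot, so $h_{L,\infty}$ converges to the width of the untruncated free jet and $\kappa_L$ stays bounded above and below independently of $L$.

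The delicate part, and the main obstacle, is the lower bound $\ld_L(0)>-1$, which is exactly where the geometry $b>0$ genuinely enters. Here I would fix $Q_n\ra 0$ and pass to the limit $\psi_n:=\psi_{\ld_L(Q_n),L}\ra\psi_{L,0}$ in $H^1_{loc}(\O_L)$ and locally uniformly (again by Proposition \ref{lb5}), with $\ld_L(Q_n)\ra\ld_L(0)$ and $\Gamma_n\ra\Gamma_{L,0}$. Since the secondary region $\{\psi_n<0\}$ shrinks, $\psi_{L,0}\ge 0$ solves a one-phase problem whose free boundary $\Gamma_{L,0}$ still satisfies the continuous fit at $A$. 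The key claim is that $\Gamma_{L,0}$ does not escape to infinity but \emph{reattaches to the slot wall $S_2$ at a point $P$ strictly below the trailing edge $B$}: it cannot run to infinity because the vanishing downstream flux $Q_n\ra 0$ forces $h_L\ra b$, while the non-degeneracy estimate (Theorem 3.1 in \cite{ACF4}) prevents the free boundary from collapsing to the trivial all-walls configuration; and a Hopf-type maximum-principle argument, together with the strict positivity of the velocities from Lemma \ref{lb7} persisting in the limit, forces the reattachment to be transversal and strictly below $B$.

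Finally I would convert this geometric fact into the quantitative estimate. Writing $\ld_L(0)=\ld_{1,L}(0)^2-\big(\tfrac{L}{L-b}\big)^2$ with $\ld_{1,L}(0)=\lim_n\f{Q_n}{h_L(Q_n)-b}\ge 0$, a reattachment at or above $B$ would pin $\ld_{1,L}(0)=0$ and give $\ld_L(0)=-\big(\tfrac{L}{L-b}\big)^2<-1$, which is exactly the degenerate outcome the ``below $B$'' geometry excludes. Quantitatively, the strictly positive gap between $P$ and $B$ yields a definite lower bound $\ld_{1,L}(0)^2>\big(\tfrac{L}{L-b}\big)^2-1=\f{b(2L-b)}{(L-b)^2}$, which is precisely $\ld_L(0)>-1$. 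Controlling $\ld_{1,L}(0)$ from below by the location of $P$ is where the real work lies; the remaining convergence and comparison steps are routine adaptations of the blow-up and sliding/comparison techniques already used in the proofs of Theorem \ref{lb6} and Lemma \ref{lb8}.
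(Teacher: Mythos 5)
Your overall strategy (pass to the limit along $Q_n\ra 0$ and $Q_n\ra+\infty$, identify one-phase limiting free boundary problems, and read off $\ld_L(0)$ and $\kappa_L$ from them) is the same as the paper's, and your treatment of the $Q\ra+\infty$ regime is essentially the paper's Step 2 in outline, though you silently assume the limiting jet stays away from $N_L$ (the paper must handle the case $h_{\kappa_L}=L$ separately, and proves $\kappa_L>0$ not by a vague appeal to non-degeneracy but via the estimate (4.6) of \cite{ACF5} together with unique continuation). The real problem is the bound $\ld_L(0)>-1$, which you correctly identify as the crux but do not prove: you propose to first establish that the limiting free boundary reattaches to $S_2$ strictly below $B$, and then to ``convert this geometric fact into the quantitative estimate'' $\ld_{1,L}(0)^2>\bigl(\tfrac{L}{L-b}\bigr)^2-1$, while conceding that ``controlling $\ld_{1,L}(0)$ from below by the location of $P$ is where the real work lies.'' No mechanism is offered for why the position of $P$ on $S_2$ would control the downstream ratio $\lim_n Q_n/(h_L(Q_n)-b)$, and I do not see one; this step is a genuine gap, not a routine adaptation.

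Moreover, your logical order is the reverse of the one that actually works. In the paper, $\ld_L(0)>-1$ is proved \emph{first} and \emph{directly}, by a barrier comparison at the leading edge $A$: condition (7) of Definition \ref{def2} gives $\psi_{\ld_L(0),L}\leq \o(y)=\max\{y,0\}$, the continuous fit at $A$ upgrades to smooth fit with $k'_{\ld_L(0),L}(0+0)=0$, a Hopf-lemma argument on $N_1$ improves the comparison to $\o\geq(1+\delta)\psi_{\ld_L(0),L}$ near $A$, and evaluating normal derivatives at $A$ yields $1\geq(1+\delta)\sqrt{-\ld_L(0)}$, hence $-\ld_L(0)<1$. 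The same comparison at $A$ is what forces the limiting free boundary down to $y=b$ for $x>a$ (otherwise the downstream asymptotics would give $-\ld_L(0)=L^2/(L-h_0)^2>1$), and the statement that the reattachment point lies \emph{strictly below} $B$ is then a \emph{consequence} of $\ld_L(0)>-1$ (via the lower barrier $\o_1(y)=\f{L}{L-b}\max\{y-b,0\}$ at $B$), not an input to it. Two smaller issues: your argument that $\ld_L(0)<0$ (``$\ld_{1,L}$ stays below $\ld_{2,L}$ for small $Q$'') presupposes $h_L-b\gg Q$, which is unjustified and in fact in tension with the limit you need later; and in the $Q\ra+\infty$ regime the vanishing of the second term in $\f{\ld_L(Q)}{Q^2}=\f{1}{(h_L-b)^2}-\f{L^2}{Q^2(L-h_L)^2}$ requires a lower bound on $L-h_L$ that you assert rather than prove.
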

\begin{proof} {\bf Step 1.} The limit $Q\rightarrow0$. For any sequence $\{Q_n\}$ with $Q_n>0$ and $Q_n\rightarrow
0$, there exists a subsequence $\{Q_n\}$, such that
$$\ld_L(Q_n)\rightarrow\ld_L(0),$$ and
$$\psi_{\ld_L(Q_n),L}\rightarrow\psi_{\ld_L(0),L}\ \text{in
$H^1_{loc}(\O_L)$ and uniformly in any compact subset of $\O_L$},$$
as $Q_n\ra0$. Furthermore, $\psi_{\ld_L(0),L}(x,y)$ is monotone
increasing with respect to $y$ and decreasing with respect to $x$.
The monotonicity of $\psi_{\ld_L(0),L}(x,y)$ implies that there
exists a monotone increasing function $y=k_{\ld_L(0),L}(x)$ for
$x>0$, such that
$$\O_L\cap\{x>0\}\cap\{\psi_{\ld_L(0),L}>0\}=\O_L\cap\{x>0\}\cap\{y>k_{\ld_L(0),L}(x)\},$$
and $$k_{\ld_L(0),L}(0)=0.$$ We first claim that
\be\label{b38}k_{\ld_L(0),L}(x)\equiv b\ \ \text{for any
$x\in(a,+\infty)$}.\ee Suppose not, there exists an
$x_0\in(a,+\infty)$, such that $k_{\ld_L(0),L}(x_0)=b$ and
$k_{\ld_L(0),L}(x)>b$ for any $x\in(x_0,+\infty)$. By using the
asymptotic behavior of $\psi_{\ld_L(0),L}$ in the downstream, one
has \be\label{b380}\lim_{x\ra+\infty}k_{\ld_L(0),L}(x)=h_0\in(b,L)\
\ \text{and}\ \ -\ld_L(0)=\f{L^2}{(L-h_0)^2}.\ee It follows from the
results in Section 9 in \cite{ACF3} and Section 11 in Chapter 3 in
\cite{FA1} that the continuous fit condition implies the smooth fit
condition, namely, $N_1\cup\Gamma_{\ld_L(0),L}$ is $C^1$-smooth at
$A$ and $k'_{\ld_L(0),L}(0+0)=0$. Furthermore, $\g\psi_{\ld_L(0),L}$
is uniformly continuous in a $\{\psi_{\ld_L(0),L}>0\}$-neighborhood
of $A$.

Define $\o(y)=\max\{y,0\}$ for $y\in(-\infty,L)$, it is easy to
check that $\o(y)\geq\psi_{\ld_L(0),L}(x,y)$ in $\O_L$. In view of
$\o(0)=\psi_{\ld_L(0),L}(0,0)=0$, one has
$$1=\f{\p\o}{\p\nu}\geq \f{\psi_{\ld_L(0),L}}{\p\nu}=\sqrt{-\ld_L(0)}\ \ \text{at}\ \
A,$$ where $\nu=(0,1)$ is the inner normal vector. This contradicts
to the fact that $\ld_{L}(0)=-\f{L^2}{(L-h_0)^2}<-1$ in
\eqref{b380}.

Next, we will show that \be\label{b39}\ld_L(0)>-1.\ee If not, we
assume that $\ld_L(0)\leq-1$. For any small $r>0$, it follows from
the proof of \eqref{b37} that there exists a small $\delta>0$, such
that
$$\o\geq(1+\delta)\psi_{\ld_L(0),L}\ \ \text{in}\ \
B_r(A)\cap\{\psi_{\ld_L(0),L}>0\},$$ which gives that
$$1=\f{\p\o}{\p\nu}\geq (1+\delta)\f{\psi_{\ld_L(0),L}}{\p\nu}=(1+\delta)\sqrt{-\ld_L(0)}\ \ \text{at}\ \
A.$$ This contradicts to our assumption $\ld_{L}(0)\leq-1$.

Moreover, we will show that
\be\label{b40}\bar\Gamma_{\ld_L(0),L}\cap N_2=\varnothing.\ee If
not, it follows from \eqref{b38} that $k_{\ld_L(0),L}(a)=b$.
Similarly, we have that $N_2\cup\Gamma_{\ld_L(0),L}$ is $C^1$-smooth
at $B$ and $\g\psi_{\ld_L(0),L}$ is uniformly continuous in a
$\{\psi_{\ld_L(0),L}>0\}$-neighborhood of $B$. Define
$\o_1(y)=\f{L}{L-b}\max\{y-b,0\}$, it is easy to check that
$\o_1(y)\leq\psi_{\ld_L(0),L}(x,y)$ in $\O_L$, and thus
$$1<\f{L}{L-b}=\f{\p\o_1}{\p\nu}\leq \f{\psi_{\ld_L(0),L}}{\p\nu}=\sqrt{-\ld_L(0)}\ \ \text{at}\ \
B,$$ where $\nu=(0,1)$ is the inner normal vector. This contradicts
to \eqref{b39}.

Since $\ld_L(Q)$ is strictly decreasing with respect to $Q$, we can
obtain the uniqueness of $\ld_L(0)$.







{\bf Step 2.} The limit $Q\rightarrow+\infty$. We will show that
there exists a positive constant $\kappa_L$, such that
$$\f{\ld_L(Q)}{Q^2}\rightarrow\kappa_L\ \ \ \ \text{as}\ \
Q\rightarrow+\infty.$$

For any fixed $L>b$, set $\psi_Q=\f{\psi_{\ld_L(Q),L}}{Q}$ and
$\ld_Q=\f{\ld_L(Q)}{Q^2}$. Then $\psi_Q$ solves the following free
boundary value problem
 $$\left\{\ba{ll} &\Delta\psi_Q=0  \ \text{in}~~\O_L\cap\{\psi_Q<0\},\ \ \Delta\psi_Q=0  \ \text{in}~~\O_L\cap\{\psi_Q>0\},\\
&|\g\psi_Q^-|^2-|\g\psi_Q^+|^2=\ld_Q\ \ \text{on}~~\Gamma_{\ld_L(Q),L},\\
&\psi_Q=0\ \text{on}\  N_1 \cup S_1\cup\Gamma_{\ld_L(Q),L},\
\psi_Q=-1 \ \text{on} \ N_2\cup S_2,
 \ \ \psi_Q=\f{L}{Q}\
\text{on}\ N_L. \ea\right. $$

By virtue of non-degeneracy Theorem 3.1 in \cite{ACF4}, we have that
if $\ld_L(Q)>0$, then \be\label{b41}\f{1}{r}\fint_{\p B_r(X_0)}
\psi^-_{\ld_L(Q),L}dS\leq c\sqrt{\ld_L(Q)}\ \ \text{implies
$\psi_{\ld_L(Q),L}\equiv 0$ in $B_{\f r2}(X_0)$},\ee and if
$\ld_L(Q)<0$, then \be\label{b42}\f{1}{r}\fint_{\p B_r(X_0)}
\psi^+_{\ld_L(Q),L}dS\leq c\sqrt{-\ld_L(Q)}\ \ \text{implies
$\psi_{\ld_L(Q),L}\equiv 0$ in $B_{\f r2}(X_0)$},\ee for any disc
$B_r(X_0)\subset\O_L$ with $B_{\f r2}(X_0)\subset\O_L\cap\{(x,y)\mid
x>0,y>0\}$. Here, $c>0$ is a constant independent of $\ld_L(Q)$ and
$L$. Therefore, there exists a constant $r_0>0$ independent of
$\ld_L(Q)$ and $L$, such that $B_{r_0}(X_0)\subset\O_L$ with $B_{\f
{r_0}2}(X_0)\subset\O_L\cap\{(x,y)\mid x>0,y>0\}$ and
$B_{\f{r_0}2}(X_0)\cap\Gamma_{\ld_L(Q),L}\neq\varnothing$, and it
follows from \eqref{b41} and \eqref{b42} that
$$\f{Q}{r_0}\geq\f{1}{r_0}\left|\fint_{\p B_{r_0}(X_0)} \psi_{\ld_L(Q),L}dS\right|\geq
c|\ld_L(Q)|^{\f12},$$ for any $Q>0$. This implies that
\be\label{b43}|\ld_Q|\leq C,\ \ \text{$C>0$ is a constant
independent of $Q$ and $L$}.\ee

For any sequence $\{Q_n\}$ with $Q_n\ra+\infty$, there exists a
subsequence $\{Q_n\}$, such that
$$\ld_{Q_n}\rightarrow\kappa_L,$$ and
$$\psi_{Q_n}\rightarrow\bar\psi_{\kappa_L}\ \text{in
$H^1_{loc}(\O_L)$ and uniformly in any compact subset of $\O_L$},$$
as $Q_n\ra+\infty$. The monotonicity of $\psi_{\ld_L(Q),L}(x,y)$
with respect to $x$ and $y$ gives that $\bar\psi_{\kappa_L}(x,y)$ is
monotone increasing with respect to $y$ and decreasing with respect
to $x$.

Since $0\leq\psi_{Q}\leq\f{L}{Q}$ in $\O_L\cap\{(x,y)\mid x\leq
0,y\geq 0\}$, we have that $\bar\psi_{\kappa_L}=0$ in
$\O_L\cap\{(x,y)\mid x\leq 0,y\geq 0\}$. Denote
$E_L=\O_L\setminus(\{(x,y)\mid x\leq 0,y\geq 0\})$, then
$\bar\psi_{\kappa_L}$ is a solution of the following free boundary
value problem
\be\label{b44}\left\{\ba{ll} &\Delta\bar\psi_{\kappa_L}=0  \ \text{in}~~E_L\cap\{\bar\psi_{\kappa_L}<0\},\\
&\bar\psi_{\kappa_L}=0,\ \ \left|\f{\p\bar\psi_{\kappa_L}}{\p\nu}\right|^2=\kappa_L\ \ \text{on $\Gamma_{\kappa_L}$},\\
&\bar\psi_{\kappa_L}=0\ \text{on}\  N_L^+\cup I_L\cup S_1,\
\bar\psi_{\kappa_L}=-1 \ \text{on} \ N_2\cup S_2, \ea\right. \ee
where $N_L^+=N_L\cap\{x\geq 0\}$, $I_L=\{(0,y)\mid 0\leq y\leq L\}$
and $\Gamma_{\kappa_L}=E_L\cap\p\{\bar\psi_{\kappa_L}<0\}$ is the
free boundary of $\bar\psi_{\kappa_L}$. Furthermore, the free
boundary $\Gamma_{\kappa_L}$ is $C^1$-smooth at the initial point
$A$, and which is given by
$$\Gamma_{\kappa_L}=\{(x,y)\mid x=g_{\kappa_L}(y), 0<y<h_L\},\ \ \text{$g_{\kappa_L}(y)$ is increasing with respect to $y$,}$$ where either
$h_L<L$, $g_{\kappa_L}(h_L-0)=+\infty$ or $h_L=L$,
$g_{\kappa_L}(h_L-0)\leq+\infty$.

We first show that \be\label{b45}\kappa_L>0.\ee Suppose that
$\kappa_L=0$. By virtue of (4.6) in \cite{ACF5}, for any free
boundary point $X_0$ and $\e>0$, we have
\be\label{b47}\f1{r}\left|\fint_{\p B_r(X_0)}\psi_{Q_n}dS\right|\leq
C|\ld_{Q_n}|^{\f12},\ee if $B_r(X_0)\subset \O_L\cap\{\e<y<L-\e\}$
and $n$ is sufficiently large, where $C>0$ is a constant depending
only on $\e$. Taking $Q_n\ra +\infty$ in \eqref{b47} yields that
$$\f1{r}\left|\fint_{\p B_r(X_0)}\bar\psi_{\kappa_L}dS\right|\leq
C|\kappa_L|^{\f12}=0,$$ which together with
$\bar\psi_{\kappa_L}\equiv0$ in $\O_L\cap\{x<0,y>0\}$ implies that
$$\bar\psi_{\kappa_L}\equiv 0\ \ \text{in}\ \ \{(x,y)\mid 0<x<\e, \e<y<L-\e\}.$$
By using the unique continuation, we can conclude that
$\psi_{\kappa_L}\equiv0$ in $E_L$, which contradicts to the fact
$\bar\psi_{\kappa_L}=-1$ on $N_2$.

Finally, we will investigate the relation between $\kappa_L$ and
$h_{\kappa_L}$, where $h_{\kappa_L}\in(b,L]$ is the asymptotic
height of the free boundary $\Gamma_{\kappa_L}$. Consider the
following two cases.

{\bf Case 1.}  $h_{\kappa_L}< L$ and
$g_{\kappa_L}(h_{\kappa_L}-0)=+\infty$. (See Figure \ref{f12})

\begin{figure}[!h]
\includegraphics[width=100mm]{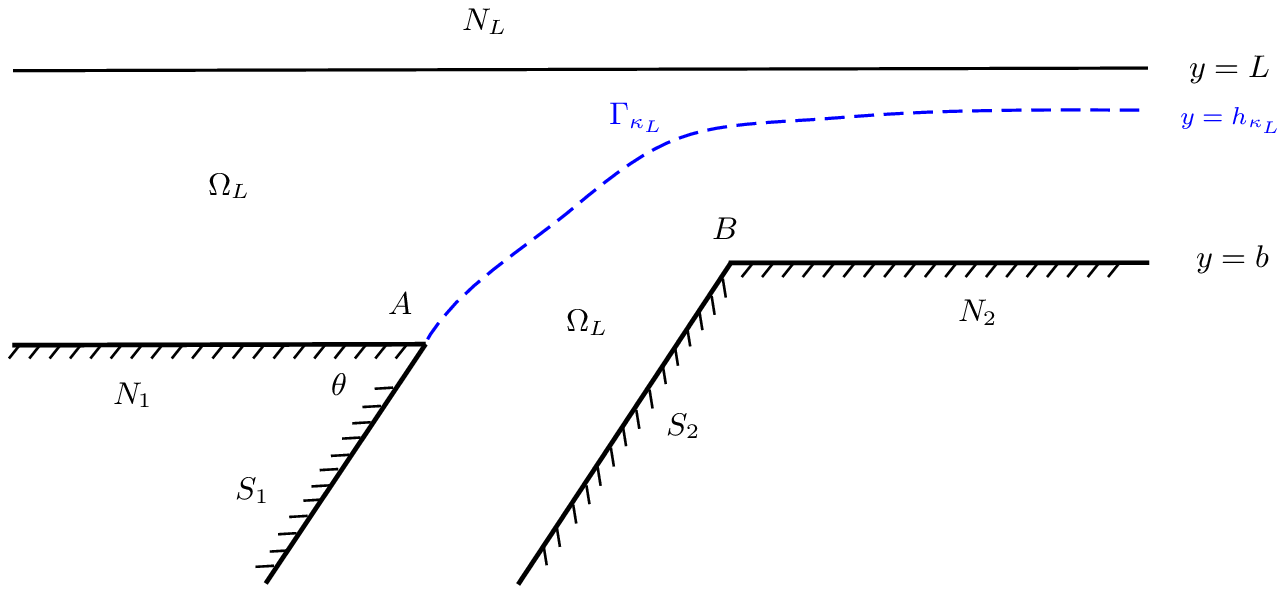}
\caption{Case 1}\label{f12}
\end{figure}

Similar to Step 2 in the proof of Theorem \ref{lb6} , we can obtain
that \be\label{cc3}\kappa_L=\f1{(h_{\kappa_L}-b)^2}.\ee

{\bf Case 2.} $h_{\kappa_L}= L$ and
$g_{\kappa_L}(L-0)\in(0,+\infty)$. (See Figure \ref{f13}).

\begin{figure}[!h]
\includegraphics[width=100mm]{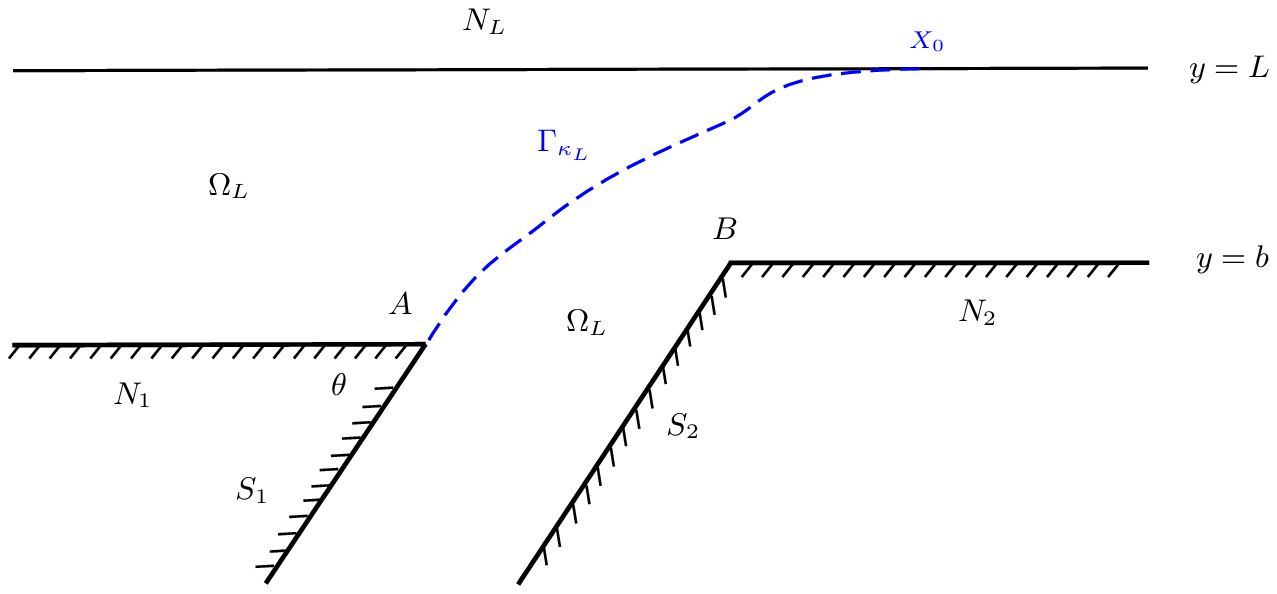}
\caption{Case 2}\label{f13}
\end{figure}

Denote $X_0=(g_{\kappa_L}(L-0),L)$. Similarly, we have that
$N_L\cup\Gamma_{\kappa_L}$ is $C^1$-smooth at $X_0$ and its tangent
is in the direction of positive $x$-axis. Moreover,
$\g\bar\psi_{\kappa_L}$ is uniformly continuous in a
$\{\bar\psi_{\kappa_L}<0\}$-neighborhood of $X_0$. Define
$\o(y)=\f{1}{L-b}\max\{y-b,0\}-1$, it is easy to check that
$$\bar\psi_{\kappa_L}(x,y)\geq\o(y)\ \ \text{in}\ \ E_L,$$ which implies that
$$\sqrt{\kappa_L}=\f{\p\bar\psi_{\kappa_L}}{\p\nu}\leq\f{\p\o}{\p\nu}=\f{1}{L-b}\ \ \text{at}\ \
X_0,$$ where $\nu=(0,1)$ is the outer normal vector. This implies
that \be\label{cc4}\kappa_L\leq\f{1}{(L-b)^2}.\ee

By using the similar arguments in the proof of Lemma \ref{lb8}, we
can obtain the uniqueness of $\kappa_L$ and $\bar\psi_{\kappa_L}$ to
the free boundary problem \eqref{b44}. Hence, one has
$$\f{\ld_L(Q)}{Q^2}\rightarrow\kappa_L,$$ and
$$\f{\psi_{\ld_L(Q),L}}{Q}\rightarrow\bar\psi_{\kappa_L}\ \text{uniformly in $E_L$},$$ as
$Q\ra+\infty$.

{\bf Step 3.} Finally, we will show that $\kappa_L$ is a uniform
constant for any large $L$, namely, there exists a $L_0$, such that
$\kappa_{L_1}=\kappa_{L_2}$ for any $L_2>L_1>L_0$. It follows from
\eqref{b43} that there exists a positive constant $C_2$ independent
of $L$, such that \be\label{c5}\kappa_L\leq C_2.\ee

By using the bounded gradient lemma 5.1 in Chapter 3 in \cite{FA1},
one has \be\label{cc5}|\g\bar\psi_{\kappa_L}|\leq C\sqrt{\kappa_L}\
\ \ \text{in}\ \ D\subset \O_L,\ee where
$D\cap\Gamma_{\kappa_L}\neq\varnothing$ and the constant $C$ depends
only on $D$. Denote $D=\O_L\cap B_{2a}(0)$, it is easy to check that
$D\cap\Gamma_{\kappa_L}\neq\varnothing$. Then there exist two points
$X_1\in \bar D\cap S_2$ and $X_2\in D\cap\Gamma_{\kappa_L}$, such
that $X_t=tX_1+(1-t)X_2\in D$ for any $t\in(0,1)$. It follows from
\eqref{cc5} that
$$1=\bar\psi_{\kappa_L}(X_2)-\bar\psi_{\kappa_L}(X_1)\leq |\g\bar\psi_{\kappa_L}(X_{t_0})||X_1-X_2|\leq C\sqrt{\kappa_L},$$ for $t_0\in(0,1)$,
where $C$ is a constant independent of $L$. This implies that there
exists a positive constant $C_1$ independent of $L$, such that
\be\label{c6}\kappa_L\geq C_1>0.\ee It follows from \eqref{cc3}-
\eqref{c6} that \be\label{cc6}h_{\kappa_L}\leq
b+\f{1}{\sqrt{\kappa_L}}\leq L_0,\ee where $L_0$ is a constant
independent of $L$.

Suppose that there exist two solutions
$(\bar\psi_{\kappa_{L_1}},\kappa_{L_1})$ and
$(\bar\psi_{\kappa_{L_2}},\kappa_{L_2})$ to the free boundary
problem \eqref{b44}, with $L_2>L_1>L_0$.

By virtue of \eqref{cc6}, we have
$$\text{the free boundary of $\bar\psi_{\kappa_{L_1}}$ lies below $\{y=L_1\}$},$$
and $$\text{the free boundary of $\bar\psi_{\kappa_{L_2}}$ lies
below $\{y=L_1\}$}.$$ Applying the similar arguments in the proof of
Lemma \ref{lb8}, we can obtain that
$\bar\psi_{\kappa_{L_1}}=\bar\psi_{\kappa_{L_2}}$ and
$\kappa_{L_1}=\kappa_{L_2}$.

\end{proof}

\begin{remark}\label{re3}
By virtue of Lemma \ref{lb10}, there exists a constant
$\kappa\in(0,+\infty)$, such that
$$\f{\ld_L(Q)}{Q^2}\rightarrow\kappa\ \ \text{and}\ \ \f{\psi_{\ld_L(Q),L}}{Q}\rightarrow\bar\psi_{\kappa}\ \text{uniformly in $E_L$},$$ as
$Q\ra+\infty$, for any $L>L_0$, where $E_L$ is defined as in
\eqref{b44}.
\end{remark}


Next, we will give the uniform estimate of the asymptotic height
$h_L$ of the free boundary.

\begin{lemma}\label{lc1}  For any $Q>0$, there exists a positive constant $C$ independent of $L$, such that
$$h_L\leq C,$$ where $h_L$ is the asymptotic height of the free boundary of
$\psi_{\ld_L(Q),L}$.\\

\end{lemma}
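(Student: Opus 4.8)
The plan is to derive the uniform bound from the large–flux analysis of Lemma \ref{lb10}, using the monotonicity of the interface in $Q$ to transfer the $L$–independent height bound of the limiting flow down to every finite flux. For $b<L\le L_0$, with $L_0$ the threshold constant from Step 3 of Lemma \ref{lb10}, there is nothing to prove since $h_L<L\le L_0$. Hence I may assume $L>L_0$, and it suffices to bound $h_L$ by the asymptotic height $h_\kappa$ of the limiting free boundary obtained as $Q\to+\infty$.

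First I would record the algebraic link. By Remark \ref{re2} the asymptotic height is the unique root of
$$\ld_L(Q)=\f{Q^2}{(h_L-b)^2}-\f{L^2}{(L-h_L)^2},$$
so that $\f{\ld_L(Q)}{Q^2}=\f{1}{(h_L-b)^2}-\f{L^2}{Q^2(L-h_L)^2}$. By Lemma \ref{lb10} and Remark \ref{re3}, for $L>L_0$ one has $\f{\ld_L(Q)}{Q^2}\to\kappa$ as $Q\to+\infty$ with $\kappa\in(0,+\infty)$ independent of $L$, and the rescaled minimizers $\psi_{\ld_L(Q),L}/Q$ converge (uniformly on $E_L$, with their free boundaries converging locally in the Hausdorff metric) to $\bar\psi_\kappa$. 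Since $\kappa>0$ is fixed while Case 2 of Lemma \ref{lb10} would force $\kappa\le(L-b)^{-2}$, only Case 1 can occur for $L>L_0$; thus by \eqref{cc3} the limiting interface $\Gamma_\kappa$ has finite asymptotic height $h_\kappa=b+\kappa^{-1/2}$, and \eqref{cc6} gives $h_\kappa\le L_0$.

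Next I would exploit monotonicity in $Q$. By Lemma \ref{lb9} the map $Q\mapsto\ld_L(Q)$ is strictly increasing, and the sliding/comparison argument there orders the interfaces pointwise, $k_{\ld_L(Q),L}(x)\le k_{\ld_L(Q'),L}(x)$ for every $x>0$ whenever $Q<Q'$ (cf. \eqref{b35}). Letting $Q'\to+\infty$ and invoking the Hausdorff convergence of the free boundaries from Lemma \ref{lb10}, the right–hand side tends to the graph $y=k_\kappa(x)$ of $\Gamma_\kappa$, which is increasing with $k_\kappa(x)<h_\kappa$ for all $x$. Hence $k_{\ld_L(Q),L}(x)\le h_\kappa$ for every $x>0$, and sending $x\to+\infty$ yields $h_L=\lim_{x\to+\infty}k_{\ld_L(Q),L}(x)\le h_\kappa\le L_0$. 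Together with the trivial case $L\le L_0$ this proves $h_L\le L_0=:C$, a constant independent of $L$.

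The delicate point is precisely this last passage to the limit: $h_L$ is a limit as $x\to+\infty$, whereas the convergence of the rescaled free boundaries is only local in $x$, so the two limits $x\to+\infty$ and $Q\to+\infty$ must be interchanged. The pointwise monotonicity of $k_{\ld_L(Q),L}(\cdot)$ in $Q$ is exactly what circumvents this, since it lets me compare the finite–flux interface with $\Gamma_\kappa$ at each fixed $x$ before taking $x\to+\infty$, so that only the uniform, $L$–independent bound $h_\kappa\le L_0$ of \eqref{cc6} is used. Checking that the interface ordering holds for all $x$ and in every sign regime of $\ld_L$ — not merely the negative case treated in \eqref{b35} — is the one step I expect to require genuine care, and it is the main obstacle; a purely algebraic attempt via \eqref{b43} fails here because the term $\f{L^2}{(L-h_L)^2}$ carries $L$ and no uniform lower bound $\ld_L(Q)+1\ge c_0>0$ is available directly.
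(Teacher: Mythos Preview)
Your strategy is genuinely different from the paper's, and the gap you flag is real and, as far as I can see, not easily closed. The reference to \eqref{b35} does not do what you want: in Lemma~\ref{lb9} that inequality is derived \emph{inside} a proof by contradiction, under the standing hypothesis $\ld_L(Q_1)\le\ld_L(Q_2)$, which is precisely what makes the sliding argument close (it forces $h_{1,L}>h_{2,L}$ a priori, so $\psi_{1,\e}\le\psi_2$ for large $\e$ and $\e_0$ exists). Once Lemma~\ref{lb9} is proved and you know $\ld_L(Q_1)>\ld_L(Q_2)$ for $Q_1>Q_2$, the algebraic identity no longer forces $h_{1,L}>h_{2,L}$: both $h_{1,L}\le h_{2,L}$ and $h_{1,L}>h_{2,L}$ are consistent with $\ld_L(Q_1)>\ld_L(Q_2)$, so the downstream comparison needed to initialize the slide is unavailable. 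A direct comparison $\psi_1\le\psi_2$ also stalls for the same reason: the boundary data agree on $N_L$, $N_1\cup S_1$ and are correctly ordered on $S_2\cup N_2$ and far in the slot, but at $x=+\infty$ the ordering depends on $h_{1,L}$ versus $h_{2,L}$, which is exactly what you are trying to prove. So the monotonicity of the interface in $Q$, on which your whole argument rests, is not established anywhere in the paper and does not follow from the lemmas you cite.

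The paper's own proof bypasses this entirely and is much shorter: argue by contradiction that $h_{L_n}\to+\infty$ along some $L_n\to+\infty$, pass to a subsequential limit $(\psi_\ld,\ld)$, and observe from
\[
\ld_{L_n}=\f{Q^2}{(h_{L_n}-b)^2}-\f{L_n^2}{(L_n-h_{L_n})^2}
\]
that necessarily $\ld\le-1$ (the first term vanishes, the second is at least $1$). Then use the barrier $\psi_{\ld}^+(x,y)\le y$ from condition~(7), which after a strict inequality via the strong maximum principle yields $(1+\delta)\psi_\ld\le y$ near $A$; blowing up at $A$ as in Step~4 of Theorem~\ref{lb6} gives $\psi_0=\sqrt{-\ld}\,\t y^+$ and hence $1\ge(1+\delta)\sqrt{-\ld}\ge1+\delta$, a contradiction. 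In other words, the ``uniform lower bound $\ld_L(Q)+1\ge c_0>0$'' that you say is unavailable is exactly what the paper extracts, not algebraically but through the blow-up comparison at $A$; this is the missing idea in your plan.
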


\begin{proof} Suppose not, we assume that there exists a sequence
$\{L_n\}$ with $L_n\ra+\infty$, such that $h_{L_n}\ra+\infty$. Note
that
\be\label{c1}\ld_{L_n}(Q)=\f{Q^2}{(h_{L_n}-b)^2}-\f{L_n^2}{(L_n-h_{L_n})^2}.\ee
Denote $(\psi_{\ld_{L_n}(Q),L_n},\ld_{L_n}(Q))$ as the corresponding
solution to the truncated injection flow problem 1 for any $Q>0$. By
virtue of \eqref{b43}, there exists a subsequence $\{L_n\}$, such
that
$$\ld_{L_n}(Q)\rightarrow\ld,$$ and
$$\psi_{\ld_{L_n}(Q),L_n}\rightarrow\psi_{\ld}\ \text{in
$H^1_{loc}(\O)$ and uniformly in any compact subset of $\O$},$$ as
$L_n\ra+\infty$. Moreover, $\psi_{\ld}(x,y)$ is monotone increasing
with respect to $y$ and decreasing with respect to $x$, which
implies that the free boundary of $\psi_{\ld}$ can be denoted as
$$\Gamma_{\ld}=\O\cap\{x>0\}\cap\{\psi_{\ld}=0\}: y=k_\ld(x)\ \ \text{for any $x>0$}.$$
Here,
$k_{\ld}(x)$ is continuous and strictly monotone increasing with
respect to $x$, $k_{\ld}(0)=0$ and $k_{\ld}(x)\ra +\infty$ as
$x\ra+\infty$. It follows from \eqref{c1} that
\be\label{c2}\ld\leq-1.\ee Furthermore, the free boundary
$\Gamma_\ld$ is continuous differentiable at $A$, namely,
$k_\ld'(0+0)=0$.

In view of the condition (7) in Definition \ref{def2}, one has
$$\psi_{\ld_{L_n}(Q),L_n}(x,y)\leq \max\{y,0\}\ \ \text{in}\ \
\O_{L_n}.$$ The strong maximum principle gives that
$$\psi_{\ld}(x,y)< y \ \text{in}\ \ \O\cap\{\psi_\ld>0\}.$$
Therefore, for small $r>0$, there exists a small $\delta>0$, such
that
$$\max\{y,0\}\geq(1+\delta)\psi_{\ld}\ \ \text{on}\ \
\p (B_r(0)\cap\{\psi_\ld>0\}).$$ It follows from the maximum
principle that \be\label{c3}\max\{y,0\}\geq(1+\delta)\psi_{\ld}\ \
\text{in}\ \ B_r(0)\cap\{\psi_\ld>0\}.\ee Define a blow-up sequence
$\t\psi_{n}(\t X)=\f{\psi_\ld(r_n\t X)}{r_n}$ with $r_n\ra0$, it
follows from \eqref{c3} that \be\label{c4}\max\{\t
y,0\}\geq(1+\delta)\t\psi_{n}(\t X)\ \ \text{in}\ \
B_1(0)\cap\{\psi_n>0\}.\ee Denote $\t\psi_0$ as the blow-up limit of
$\t\psi_n$, it follows from \eqref{c4} and the similar arguments in
the proof of Theorem \ref{lb6} that
$$\t\psi_0(\t X)=\max\{\sqrt{-\ld}\t y,0\}\ \ \text{and}\ \ \max\{\t y,0\}\geq(1+\delta)\t\psi_0(\t X)\ \ \text{in}\ \
B_1(0)\cap\{\psi_0>0\}.$$ This gives that
$$1\geq(1+\delta)\f{\p\t\psi_0}{\p\nu}=(1+\delta)\sqrt{-\ld}\ \ \text{at}\ \ 0,$$ where $\nu=(0,1)$ is the inner normal
vector. It leads a contradiction with \eqref{c2}.

\end{proof}

\section{The proof of The main results}

Based on the results in previous sections, we will complete the
proof of Theorem \ref{th1} - Theorem \ref{th3} in this section.

\begin{theorem}\label{ld1} For any $Q>0$, there exist a unique $\ld>-1$ and a unique solution $(\psi_{\ld},\Gamma_{\ld})$ to the injection flow
problem 1.

\end{theorem}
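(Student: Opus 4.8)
The plan is to obtain the solution on the full domain $\O$ as the limit of the truncated solutions $(\psi_{\ld_L,L},\Gamma_{\ld_L,L})$ constructed in Lemma \ref{lb8}, and then to establish uniqueness by a sliding comparison argument in the spirit of Lemma \ref{lb8}. First I would fix $Q>0$ and take a sequence $L_n\to+\infty$. By Lemma \ref{lb8} each truncated problem has a unique solution with jump constant $\ld_{L_n}=\ld_{L_n}(Q)$, and the uniform bound $|\ld_{L_n}(Q)|\le C$ (independent of $L$, from Proposition \ref{lb5} and \eqref{b43}) together with the Lipschitz estimates allow extraction of a subsequence with $\ld_{L_n}(Q)\to\ld$ and $\psi_{\ld_{L_n},L_n}\to\psi_\ld$ in $H^1_{loc}(\O)$ and locally uniformly. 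As in Step 1 of Theorem \ref{lb6} (via Lemma 6.2 in \cite{ACF4}), $\psi_\ld$ is a local minimizer of the limiting functional, hence harmonic off its zero set; the monotonicity in $y$ and in $x$ passes to the limit, so $\Gamma_\ld=\O\cap\{x>0\}\cap\{\psi_\ld=0\}$ is a graph $y=k_\ld(x)$ with $k_\ld$ strictly increasing and $k_\ld(0)=0$.

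The crucial point is then to show $h:=\lim_{x\to+\infty}k_\ld(x)$ is finite and $\ld>-1$. Here I would invoke the uniform height bound $h_L\le C$ of Lemma \ref{lc1}, which gives $h=\lim_n h_{L_n}\le C<+\infty$. Passing to the limit in $\ld_{L_n}=\f{Q^2}{(h_{L_n}-b)^2}-\f{L_n^2}{(L_n-h_{L_n})^2}$, since $h_{L_n}$ stays bounded the second term tends to $1$, so $\ld=\f{Q^2}{(h-b)^2}-1>-1$, which is exactly the value appearing in \eqref{a00}. The remaining conditions of Definition \ref{def1}—the Rankine--Hugoniot relation \eqref{a00}, the smooth fit \eqref{a3}, the far-field asymptotics (6), and the estimate \eqref{a4}—follow by passing to the limit from the corresponding truncated statements (Steps 2--6 of Theorem \ref{lb6}), noting that $\ld_{2,L}=\f{L}{L-h_L}\to 1$ recovers the unit upstream speed. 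Analyticity of $\Gamma_\ld$ is obtained exactly as in Theorem \ref{lb3}.

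For uniqueness I would first show that any two solutions share the same $\ld$. Suppose $(\psi,\Gamma)$ and $(\t\psi,\t\Gamma)$ solve problem 1 with $\ld<\t\ld$. Since $\ld=\f{Q^2}{(h-b)^2}-1$ is strictly decreasing in $h$, we get $h>\t h$, hence $k_\ld(x)>\t k_{\t\ld}(x)$ for large $x$. Sliding $\psi_\e(x,y)=\psi(x,y-\e)$ downward and choosing the smallest $\e_0\ge0$ with $\psi_{\e_0}\le\t\psi$ in $\O$ and equality at some $X_0\in\bar\O$, the argument of Lemma \ref{lb8} applies: if $\e_0>0$ the touching point is finite and can be taken to be a free boundary point, and Hopf's lemma yields $\ld>\t\ld$ at $X_0$, a contradiction; if $\e_0=0$ one uses $\ld\cdot\t\ld>0$ and the blow-up at $A$ together with the smooth fit \eqref{a3} to contradict $\ld<\t\ld$. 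With $\ld=\t\ld$ fixed, the same sliding argument (Step 2 of Lemma \ref{lb8}) forces $\psi=\t\psi$.

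The hardest part will be the unboundedness of $\O$: the comparison argument must guarantee that the first contact point $X_0$ is finite. This is precisely where the far-field asymptotics of condition (6) are essential—they force $\psi$ and $\t\psi$ (and their slid copies) to coincide asymptotically in the upstream, downstream, and slot directions, confining any genuine contact to a compact set so that Hopf's lemma is applicable. Establishing $\ld>-1$ rather than the degenerate value $\ld=-1$ is the other delicate step, and it rests entirely on the uniform height bound of Lemma \ref{lc1}.
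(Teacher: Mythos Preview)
Your existence argument is essentially the paper's own: pass to the limit $L_n\to\infty$ using the uniform bounds on $\ld_{L_n}$ and $h_{L_n}$, identify $\ld=\f{Q^2}{(h-b)^2}-1>-1$, and check Definition~\ref{def1} by inheriting the truncated properties. That part is fine.

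The gap is in the uniqueness step. You define $\e_0$ as the smallest $\e\ge0$ with $\psi_\e\le\t\psi$ \emph{everywhere in $\O$}, and then say the far-field asymptotics ``confine any genuine contact to a compact set.'' But the prior question is why the global inequality $\psi_\e\le\t\psi$ holds for \emph{any} $\e$ at all: the positivity set $\O\cap\{\psi_\e>0\}$ is an unbounded half-plane-like region, and the ordinary maximum principle does not apply there. The asymptotics in condition~(6) of Definition~\ref{def1} are stated only on compact subsets and do not by themselves yield a global comparison. The paper closes this by (i) defining $\e_0$ instead as the smallest $\e$ for which the \emph{free boundary} of $\psi_{\e}$ lies above that of $\t\psi$ --- this is well-posed because both free boundaries are bounded graphs with finite asymptotic heights --- and then (ii) invoking the Phragm\`en--Lindel\"of theorem (Gilbarg \cite{G1}) on $\o=\t\psi-\psi_{\e_0}$ in $\O^+$: condition~(7), i.e.\ \eqref{a4}, gives $\psi^+-y=O(1)=o(|X|)$, so $m(r)/r\to0$, and since $\o\ge0$ on $\p\O^+$ one concludes $\o\ge0$ throughout. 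Only after this does the touching-point/Hopf argument proceed as in Lemma~\ref{lb8}. Without naming and using Phragm\`en--Lindel\"of (or an equivalent sublinear-growth maximum principle), your sliding step is not justified in the unbounded domain.
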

\begin{proof}{\bf Step 1.}  It follows from \eqref{b43} that there
exists a positive constant $C$ independent of $Q$ and $L$, such that
$$|\ld_L|\leq CQ^2\ \ \text{for any $L>b$ and $Q>0$}.$$
By virtue of Lemma \ref{lc1}, one has
$$h_L\leq C\ \ \text{and}\ \ \ \ld_L=\f{Q^2}{(h_L-b)^2}-\f{L^2}{(L-h_L)^2}=\f{Q^2}{(h_L-b)^2}-\f{1}{\left(1-\f{h_L}{L}\right)^2}.$$

Then there exist a sequence $\{L_n\}$, a constant $\ld$ and a $h
>b$, such that
$$\ld_{L_n}\rightarrow\ld,\ \ h_{L_n}\ra h,$$ and
$$\psi_{\ld_{L_n},L_n}\rightarrow\psi_{\ld}\ \text{in
$H^1_{loc}(\O)$ and uniformly in any compact subset of $\O$},$$ as
$L_n\ra+\infty$. Obviously, $\ld=\f{Q^2}{(h-b)^2}-1$. By using the
similar arguments in Lemma 6.2 in \cite{ACF4}, we can show that
$\psi_{\ld}$ is a local minimizer to the variational problem
$(P_{\ld})$, namely,
$$P_{\ld}:\ \ J_{D}(\psi_{\ld})=\min J_{D}(\psi)\ \ \text{for any $\psi\in K$ and
$\psi=\psi_{\ld}$ on $\p D$},$$ where
$$J_{D}(\psi)=\int_{D}\left|\nabla\psi-(\ld_1I_{\{\psi<0\}}+\ld_2I_{\{\psi>0\}}+\ld_0I_{\{\psi=0\}})I_{\{x>0\}}e\right|^2dxdy$$
for any bounded domain $D\subset \O$, where $\ld_1=\f{Q}{h-b}$ and
$\ld_2=1$.

{\bf Step 2.} Since $\psi_\ld$ is a local minimizer, we can conclude
that $\psi_\ld$ is a harmonic in $\O\setminus\Gamma$. Moreover, the
free boundary $\Gamma_\ld: y=k_\ld(x)$ of $\psi_{\ld}$ satisfies the
continuous fit condition $k_{\ld}(0)=0$ and the smooth fit condition
\eqref{a3}, where $k_{\ld}(x)$ is continuous and strictly monotone
increasing with respect to $x$, and $k_{\ld}(x)\ra h$ as
$x\ra+\infty$. It follows from the condition (7) in Definition
\ref{def2} that \be\label{d0}y-h\leq\psi_\ld^+(x,y)\leq y\ \
\text{in $\O\cap\{y>0\}$}.\ee

Hence, the conditions (1)-(5) and (7) in Definition \ref{def1} hold.

{\bf Step 3.} In this step, we will verify the condition (6) in
Definition \ref{def1}. Denote $\phi(x,y)=\psi_{\ld}(x,y)-y$ and
$\phi_n(x,y)=\phi(x-n,y)$, it follows from \eqref{d0} that
$$\Delta\phi_n=0\ \ \text{and}\ \ -h\leq\phi_n\leq0\ \ \text{in}\ \ \{x<n,y>0\}.$$
By using the elliptic estimate, there exists a subsequence
$\{\phi_n\}$, such that $$\phi_n\rightarrow \phi_0\ \ \text{in}\ \
\{-\infty<x<+\infty,0<y<+\infty\},$$ and $\phi_0$ satisfies
$$\Delta\phi_0=0\ \ \text{and}\ \ -h\leq\phi_0\leq0\ \ \text{in}\ \
\{-\infty<x<+\infty,0<y<+\infty\},\ \ \text{and}\ \ \phi_0(x,0)=0.$$
Then $\phi_0\equiv0$ in $\{-\infty<x<+\infty,0<y<+\infty\}$, and
thus \be\label{d1}\psi_\ld(x,y)\ra y\ \ \text{uniformly in any
compact subset of $(0,+\infty)$, as $x\ra-\infty$},\ee Along the
similar arguments in the proof of \eqref{b220}, one has
\be\label{d2}\left|\psi_{\ld}(x,y)-\f{Q(y\cos\th-x\sin\th)}{a\sin\th-b\cos\th}\right|\ra
0 \ \ \text{uniformly in any compact subset of $S$,}\ee as
$y\ra-\infty$, where $S=\{(x,y)\mid
y\cot\th<x<(y-b)\cot\th+a,-\infty<y<+\infty\}$.

Next, we consider the asymptotic behavior of $\psi_{\ld}$ in the
downstream. For any blow-up sequence
$\psi_n(x,y)=\psi_{\ld_L,L}(x+n,y)$ for $x>-\f n2$, such that
$$\psi_n(x,y)\rightarrow\psi_0(x,y)\  \ \text{uniformly in any compact subset of $(0,+\infty)$, as
$x\ra+\infty$},$$ and $\psi_0$ satisfies
$$\left\{\ba{ll} &\Delta\psi_0=0  \ \text{in}~~\mathbb{R}_b^2\setminus\{y=h\},\\
&\psi_0(x,b)=-Q\ \text{and}\ \psi_0(x,h)=0\ \ \text{for
$-\infty<x<+\infty$},\\
&0\leq\psi_0(x,y)\leq h\ \ \text{in}\ \
\{-\infty<x<+\infty\}\times\{0<y<h\}, \ea\right.
$$ where $\mathbb{R}^2_b=\{(x,y)\mid -\infty<x<+\infty,y>b\}$.

 Then one has that $\{y=h\}$ is the free boundary of $\psi_0$ and $\psi_0=\f{Q(y-b)}{h-b}-Q$ in $\{-\infty<x<+\infty\}\times\{0<y<h\}$.
In view of the condition (3) in Definition \ref{def1}, we can
conclude that $\f{\p\psi_0(x,h+0)}{\p y}=1$, and thus
$\psi_0(x,y)=y-h$ in $\{-\infty<x<+\infty\}\times\{y>h\}$.
Therefore, the boundary value problem above possesses a unique
solution
$$\psi_0(x,y)=\left\{\ba{ll} \f{Q(h-y)}{h-b},\
&\text{if $b<y<h$},\\
y-h,\ &\text{if $h<y<+\infty$}.\ea\right.$$

Finally, we will verify the convergence of $\g\psi_\ld$ in the far
field. For any sequence $X_n=(x_n,y_n)\in\O\cap\{\psi_\ld>0\}$ with
$\rho_n=|X_n|\ra+\infty$, we next consider the following two cases.

{\bf Case 1.} $y_n>\e |x_n|$ for $\e>0$, or $x_n<0$ and
$\f{y_n}{x_n}\ra 0$. Define $\t Y_n=\f{X_n}{\rho_n}$ and a blow-up
sequence
$$\t\psi_{\rho_n}(\t
X)=\f{\psi_{\ld}(\rho_n\t X)}{\rho_n}.$$ Then one has
$$\t Y_n\rightarrow\t
Y_0=(\t x_0,\t y_0)\ \text{and}\ \t\psi_{\rho_n}\ra \t\phi\ \
\text{uniformly in any compact subset of $\mathbb{R}^2\cap\{\t
y>0\}$}.$$ By virtue of \eqref{d0}, one has
$$\t y-\f{h}{\rho_n}\leq\t\psi_{\rho_n}(\t X)\leq \t y \ \ \text{in}\ \
\mathbb{R}^2\cap\{\psi_{\rho_n}>0\},$$ which implies that $\t\phi(\t
X)=\max\{\t y,0\}$.

If $y_n>\e |x_n|$ for $\e>0$, it is easy to check that $$|\t Y_0|=1\
\ \text{and} \ \ \t y_0\geq\f{\e}{\sqrt{1+\e^2}}.$$ By virtue of
elliptic regularity, one has
$$\t\psi_{\rho_n}\rightarrow\t\phi\ \ \text{in $C^{2,\alpha}(B_r(\t
Y))$},\ \ \alpha\in(0,1),$$ for $0<r<\f{\e}{4\sqrt{1+\e^2}}$. Thus
one has
$$\g\t\psi_{\rho_n}\left(\f{X_n}{\rho_n}\right)\ra\g\t\phi(\t Y_0)=(0,1).$$

If $x_n<0$ and $\f{y_n}{x_n}\ra 0$, one has $$\t Y_0=(-1,0).$$
Applying elliptic estimates, one has
$$\t\psi_{\rho_n}\rightarrow\t\phi\ \ \text{in $C^{2,\alpha}(D\cup T)$},\ \ \alpha\in(0,1),$$ where $D=B_{2r}(\t Y)\cap\{\t y>0\}$ and $T=\{(\t x,0)\mid |\t x+1|<r\}$ for $r>0$. Consequently,
$$\g\t\psi_{\rho_n}\left(\f{X_n}{\rho_n}\right)\ra\g\t\phi(\t Y_0)=(0,1).$$

{\bf Case 2.} $x_n>0$ and $\f{y_n}{x_n}\ra 0$ and
$y_n-k_\ld(x_n)\ra+\infty$ as $n\ra+\infty$.  Define a blow-up
sequence $\psi_{r_n}(\t X)=\f{\psi_\ld(Z_n+r_n\t X)}{r_n}$ and
$\psi_0$ is the blow-up limit of $\psi_{r_n}$, where
$r_n=y_n-k_\ld(x_n)$ and $Z_n=(x_n,k_\ld(x_n))$. The inequality
\eqref{d0} gives that
$$\t y+\f{k_\ld(x_n)-h}{r_n}\leq\psi^+_{r_n}(\t X)\leq \t
y+\f{k_\ld(x_n)}{r_n}\ \ \text{in}\ \ \{\t y>0\},$$ which implies
that
$$\psi_0(\t X)=\max\{ \t y,0\}\ \ \text{in}\ \ \{\t y>0\}.$$ Since
$-Q\leq\psi_\ld<0$ in $\O\cap\{\psi_\ld<0\}$, which implies that
$\psi^-_0=0$. Therefore, $\psi_0$ is 1-plane solution, and
$\psi_0(\t X)=\max\{ \t y,0\}$.
 The elliptic regularity gives that
$$\psi_{r_n}\rightarrow\psi_0\ \ \text{in $C^{2,\alpha}(B_{\f{1}{4}}(X_1))$},\ \ \alpha\in(0,1),\ \
X_1=(0,1).$$ Thus one has
$$\g\psi_\ld(X_n)=\g\psi_{r_n}(X_1)\rightarrow(0,1)\ \ \text{as}\ \
n\ra+\infty.$$ This gives that
$\g\psi_\ld(x,y)\rightarrow\g\psi(X_1)=(0,1)$ as
$x^2+y^2\rightarrow+\infty$ with
dist($(x,y),\Gamma$)$\rightarrow+\infty$ and $x>0$.

{\bf Step 2.} In this step, we will obtain the uniqueness of the
injection flow problem 1. Suppose that there exists another
different solution $(\t\psi_{\t\ld},\t\ld)$ to the injection flow
problem 1. In view of \eqref{a4}, one has
\be\label{d3}\psi_\ld(X)-y=o(|X|), \ \ \psi_\ld(X)>0, \text{as
$|X|\ra+\infty$,}\ee and \be\label{d4}\t\psi_{\t\ld}(X)-y=o(|X|), \
\ \t\psi_{\t\ld}(X)>0, \text{as $|X|\ra+\infty$.}\ee Without loss of
generality, we assume that $\ld\leq\t\ld$. It is easy to check that
\be\label{d5}\lim_{x\ra+\infty}k_{\ld}(x)=h=\f{Q}{\sqrt{1+\ld}}+b\geq\f{Q}{\sqrt{1+\t\ld}}+b=\t
h=\lim_{x\ra+\infty}\t k_{\t\ld}(x).\ee  Define
$\psi_{\ld,\e}(x,y)=\psi_\ld(x,y-\e)$ for any $\e\geq0$. Since the
asymptotic heights of the free boundaries $\Gamma_\ld$ and
$\t\Gamma_{\t\ld}$ are finite, it follows from \eqref{d5} that we
can take $\e_0\geq0$ to be the smallest one, such that
\be\label{d7}\text{the free boundary of $\psi_{\ld,\e_0}$ lies above
the free boundary of $\t\psi_{\t\ld}$}.\ee Denote
$\O^+=\O\cap\{\psi_{\ld,\e_0}>0\}\cap\{\t\psi_{\t\ld}>0\}$ and
$\o(X)=\t\psi_{\t\ld}-\psi_{\ld,\e_0}$, it follows from \eqref{d3},
\eqref{d4} and \eqref{d7} that
$$\o(X)\geq 0\ \ \text{on}\ \ \p\O^+\ \ \text{and}\ \ \lim_{r\ra +\infty}\f{m(r)}{r}\ra
0,$$ where $r=|X|$ and $m(r)=\min_{|X|=r}\o(X)$. Applying the
Phragm$\grave{\text{e}}$n-Lindel\"of theorem in \cite{G1}, one has
$$\o(X)\geq 0\ \ \text{in}\ \ \O^+,$$ which implies that
\be\label{d8}\t\psi_{\t\ld}\geq\psi_{\ld,\e_0}\ \ \text{in
$\O\cap\{\psi_{\ld,\e_0}>0\}$}.\ee

By virtue of the asymptotic behavior of $\psi_{\ld,\e_0}$ and
$\t\psi_{\t\ld}$, it follows from the similar arguments in the step
4 in the proof of Theorem \ref{lb6} that
\be\label{d9}\t\psi_{\t\ld}\geq\psi_{\ld,\e_0}\ \ \text{in
$\O\cap\{\t\psi_{\t\ld}<0\}$}.\ee

Next, we consider two cases in the following.

{\bf Case 1.} $\e_0>0$. In view of \eqref{d7}, we can take a free
boundary point $X_0$ with $|X_0|<+\infty$. Applying the strong
maximum principle, one has
$$\t\psi_{\t\ld}>\psi_{\ld,\e_0}\ \ \text{in
$\O\cap\{\psi_{\ld,\e_0}>0\}$ and}\ \
\t\psi_{\t\ld}>\psi_{\ld,\e_0}\ \ \text{in
$\O\cap\{\t\psi_{\t\ld}<0\}$}.$$ Since the free boundary
$\t\Gamma_{\t\ld}$ and $\Gamma^{\e_0}_{\ld}$ are analytic at $X_0$,
it follows from Hopf's lemma that
$$|\g\psi_{\ld,\e_0}^-|=-\f{\p\psi^-_{\ld,\e_0}}{\p\nu}>-\f{\p\t\psi_{\t\ld}^-}{\p\nu}=|\g\t\psi_{\t\ld}^-|\ \ \text{and}\ \
|\g\psi_{\ld,\e_0}^+|=\f{\p\psi^+_{\ld,\e_0}}{\p\nu}<\f{\p\t\psi_{\t\ld}^+}{\p\nu}=|\g\t\psi_{\t\ld}^+|\
\ \text{at}\ \ X_0,$$ where $\nu$ is the inner normal vector to
$\p\{\t\psi_{\t\ld}>0\}$ at $X_0$. Those give that
$$\ld=|\g\psi_{\ld,\e_0}^-|^2-|\g\psi_{\ld,\e_0}^+|^2>|\g\t\psi_{\t\ld}^-|^2-|\g\t\psi_{\t\ld}^+|^2=\t\ld \ \ \text{at}\ \ X_0,$$
which contradicts to our assumption $\ld\leq\t\ld$.

{\bf Case 2.} $\e_0=0$. Similar to \eqref{b30}, we can show that
$\ld\cdot\t\ld>0$. Without loss of generality, one assume that
$0>\t\ld\geq\ld$. Therefore, we can obtain a contradiction by using
the similar arguments in Subcase 2.1 in the proof of Lemma
\ref{lb7}.

\end{proof}

By virtue of Theorem \ref{ld1}, we complete the proof of Theorem
\ref{th1}.

Due to the uniqueness of $\ld$ for any $Q>0$, we can define a
function $\ld=\ld(Q)$ for any $Q>0$. We next consider the relation
between $\ld(Q)$ and $Q>0$, and complete the proof of Theorem
\ref{th2}.

\begin{proof}[Proof of Theorem \ref{th2}]
(1). For any $Q_1>Q_2>0$, there exist two solutions
$(\psi_{\ld(Q_1)},\ld(Q_1))$ and $(\psi_{\ld(Q_2)},\ld(Q_2))$ to the
injection flow problem 1. We next show that
$$\ld(Q_1)>\ld(Q_2)\ \ \text{for any $Q_1>Q_2>0$}.$$
If not, then there exist $Q_1>Q_2>0$, such that
 $\ld(Q_1)\leq\ld(Q_2)$, and we consider the following two
 cases.

 {\bf Case 1.} $\ld(Q_1)=\ld(Q_2)=0$. Since $\ld(Q_1)$ and $\psi_{\ld(Q_1)}$ are unique for any given $Q_1>0$,
there exists a sequence $\ld_{L_n}(Q_1)$ with $\ld_{L_n}(Q_1)=0$,
such that
$$\psi_{\ld_{L_n}(Q_1),L_n}\rightarrow\psi_{\ld(Q_1)}\ \text{in
$H^1_{loc}(\O)$ and uniformly in any compact subset of $\O$},$$ as
$L_n\ra+\infty$. By virtue of \eqref{b34}, one has
$$(a\sin\th-b\cos\th)^{\f\pi\th}Q_1^{1-\f\pi\th}=L_n-(L_n-b)^{\f\pi\th}(L_n+Q_1)^{1-\f\pi\th}.$$
Set $t_n=L_n+Q_1$, one has
$$(a\sin\th-b\cos\th)^{\f\pi\th}Q_1^{1-\f\pi\th}=t_n\left(1-\left(1-\f{Q_1+b}{t_n}\right)^{\f{\pi}{\th}}\right)-Q_1
\rightarrow\left(\f{\pi}{\th}-1\right)Q_1+\f{b\pi}{\th},$$ as
$t_n\rightarrow+\infty$. Then one has
 \be\label{d10}(a\sin\th-b\cos\th)^{\f\pi\th}=\left(\f\pi\th-1\right)Q_1^{\f\pi\th}+\f{b\pi}{\th}Q_1^{\f\pi\th-1}.\ee
 Similarly, we have
$$(a\sin\th-b\cos\th)^{\f\pi\th}=\left(\f\pi\th-1\right)Q_2^{\f\pi\th}+\f{b\pi}{\th}Q_2^{\f\pi\th-1},$$
 which together with \eqref{d10} implies that $Q_1=Q_2$. This leads a
contradiction.

  {\bf Case 2.} $\ld(Q_1)\neq0$ or $\ld(Q_2)\neq0$.

Since $Q_1>Q_2$, one has
\be\label{d11}h_1=\f{Q_1}{\sqrt{\ld(Q_1)+1}}+b>\f{Q_2}{\sqrt{\ld(Q_2)+1}}+b=h_2\
\ \text{and}\ \ k_{\ld(Q_1)}(x)>k_{\ld(Q_2)}(x)\ee for sufficiently
large $x>0$.

Define a function $\psi_{\ld(Q_1),\e}(x,y)=\psi_{\ld(Q_1)}(x,y-\e)$
for $\e\geq0$. In view of \eqref{d11}, let $\e_0\geq0$ be the
smallest one, such that $$\text{the free boundary of
$\psi_{\ld(Q_1),\e_0}$ lies above the free boundary of
$\psi_{\ld(Q_2)}$}.$$ Similar to the proof of Lemma \ref{ld1}, by
using the Phragm$\grave{\text{e}}$n-Lindel\"of theorem in \cite{G1}
and the asymptotic behavior of $\psi_{\ld(Q_1),\e_0}$ and
$\psi_{\ld(Q_2)}$, we have $$\psi_{\ld(Q_1),\e_0}\leq\psi_{\ld(Q_2)}
\ \text{in $\O\cap\{\psi_{\ld(Q_1),\e_0}>0\}$ and }
\psi_{\ld(Q_1),\e_0}\leq\psi_{\ld(Q_2)}\ \ \text{in
$\O\cap\{\psi_{\ld(Q_2)}<0\}$}.$$ Then we can obtain a contradiction
by using the similar arguments in the proof of Lemma \ref{ld1}.

By virtue of the uniqueness of the solution $(\psi_\ld,\ld)$, it
follows from the similar arguments in the proof of Lemma \ref{lb9}
that $\ld(Q)$ is continuous for any $Q>0$.

(2). Next, we will show that there exists a
$\underline\ld\in(-1,0)$, such that $\ld(Q)\ra\underline\ld$ as
$Q\rightarrow0$. The monotonicity of $\ld(Q)$ gives that there
exists $\underline\ld\geq-1$, such that
$\ld(Q)\rightarrow\underline\ld$ as $Q\rightarrow0$. It suffices to
exclude the case $\ld=-1$. For any sequence $\{Q_n\}$ with $Q_n>0$
and $Q_n\rightarrow 0$, such that
$$\ld(Q_n)\rightarrow\underline\ld,\ \psi_{\ld(Q_n)}\rightarrow\psi_{\underline\ld}\ \text{in
$H^1_{loc}(\O)$ and uniformly in any compact subset of $\O$},$$ as
$Q_n\ra0$. Moreover, $\p_x\psi_{\underline\ld}\leq 0$ and
$\p_y\psi_{\underline\ld}\geq0$ in
$\O\cap\{\psi_{\underline\ld}>0\}$, which implies that
$$\O\cap\{x>0\}\cap\{\psi_{\underline\ld}>0\}=\O\cap\{x>0\}\cap\{y>k_{\underline\ld}(x)\},$$
where $k_{\underline\ld}(x)$ is monotone increasing for $x>0$, and
$k_{\underline\ld}(0)=0$.

Suppose that $\underline\ld=-1$. Define $\o(y)=\max\{y,0\}$, it
follows from the condition (7) in Definition \ref{def1} that
$\o(y)\geq\psi_{\underline\ld}(x,y)$ in $\O$. The strong maximum
principle gives that $$\psi_{\underline\ld}<y\ \ \text{in}\ \
\O\cap\{\psi_{\underline\ld}>0\}.$$ For any small $r>0$, it follows
from the proof of \eqref{b37} that there exists a small $\delta>0$,
such that
$$\o\geq(1+\delta)\psi_{\underline\ld}\ \ \text{in}\ \
B_r(A)\cap\{\psi_{\underline\ld}>0\},$$ which gives that
\be\label{d13}1\geq
(1+\delta)\f{\psi_{\underline\ld}}{\p\nu}=(1+\delta)\sqrt{-\underline\ld}=1+\delta\
\ \text{at}\ \ A,\ee where $\nu=(0,1)$ is inner normal vector. This
leads a contradiction.

Similar to the proof of \eqref{b38}, one has
\be\label{d12}k_{\underline\ld}(x)\equiv b\ \ \text{for any
$x\in(a,+\infty)$}.\ee In fact, if there exists an
$x_0\in(a,+\infty)$, such that $k_{\underline\ld}(x_0)=b$ and
$k_{\underline\ld}(x)>b$ for any $x\in(x_0,+\infty)$. The asymptotic
behavior of $\psi_{\underline\ld}$ gives that
$$\underline\ld=-1,$$ which contradicts to $\underline\ld>-1$.
Similar to the proof of \eqref{b40}, we can show that
$\bar\Gamma_{\underline\ld}\cap N_2=\varnothing$.

(3). In this step, we will show that
$$\f{\ld(Q)}{Q^2}\rightarrow\kappa\in(0,+\infty)\ \ \text{as}\ \
Q\rightarrow+\infty.$$

Set $\psi_Q=\f{\psi_{\ld(Q)}}{Q}$, $\ld_Q=\f{\ld(Q)}{Q^2}$ and $h_Q$
is the asymptotic height of the free boundary of $\psi_Q$. By virtue
of Lemma \ref{lb10}, one has
\be\label{d15}\ld_Q=\f1{(h_Q-b)^2}-\f{1}{Q^2}\leq C,\ \ \text{where
$C>0$ is a constant independent of $Q$}.\ee It follows from the
proof of \eqref{b45} that there exists a $c>0$ independent of $Q$,
such that \be\label{d16}\ld_Q\geq c>0.\ee In view of \eqref{d15} and
\eqref{d16}, there exist two positive constants $C_1$ and $C_2$
independent of $Q$, such that\be\label{d17}C_1\leq h_Q-b\leq C_2.\ee

Therefore, for any sequence $\{Q_n\}$ with $Q_n\ra+\infty$, such
that
$$\ld_{Q_n}\rightarrow \kappa, \ \ h_{Q_n}\ra h_\kappa\ \
\text{and}\ \ \psi_{\ld_{Q_n}}\rightarrow\bar\psi_{\kappa}\
\text{uniformly in $\O$, as $Q_n\ra+\infty$}.$$  It is easy to check
that $\bar\psi_\kappa=0$ in $\O\cap\{(x,y)\mid x\leq 0, y\geq 0\}$.
Similar to Lemma \ref{lb10}, $\bar\psi_{\kappa}$ is a solution of
the following free boundary problem
\be\label{d18}\left\{\ba{ll} &\Delta\bar\psi_{\kappa}=0  \ \text{in}~~E\cap\{\bar\psi_{\kappa}<0\},\\
&\bar\psi_{\kappa}=0,\ \ \left|\f{\p\bar\psi_{\kappa}}{\p\nu}\right|^2=\kappa\ \ \text{on $\Gamma_{\kappa}$},\\
&\bar\psi_{\kappa}=0\ \text{on}\ S_1,\ \bar\psi_{\kappa}=-1 \
\text{on} \ N_2\cup S_2, \ea\right. \ee where
$E=\O\setminus(\{(x,y)\mid x\leq 0,y\geq 0\})$ and
$\Gamma_{\kappa}=E\cap\{\bar\psi_\kappa<0\}$ is the free boundary of
$\bar\psi_{\kappa}$.

By using the similar arguments in the proof of Lemma \ref{lb10}, we
can obtain the uniqueness of $(\bar\psi_{\kappa},\kappa)$ to the
free boundary problem \eqref{d18}.

\end{proof}
Based on the proof of Theorem \ref{th2}, we can obtain the existence
and uniqueness of the solution to the injection flow problem 2.
\begin{corollary}\label{ld3} For any $\ld\in(\underline\ld,+\infty)$, there exist a unique $Q=Q(\ld)>0$ and a unique solution $(\psi_{Q},\Gamma_{Q})$ to the injection flow
problem 2. Furthermore,\\
(1) $Q(\underline\ld+0)=\lim_{\ld\ra\underline\ld^+}Q(\ld)=0$ and
$\f{Q^2(\ld)}{\ld}\ra \f1\kappa$ as $\ld\ra+\infty$.\\
(2) $Q(0)>0$ is uniquely determined by
$$(a\sin\th-b\cos\th)^{\f\pi\th}=\left(\f\pi\th-1\right)Q^{\f\pi\th}-\f{b\pi}{\th}Q^{\f\pi\th-1}.$$

\end{corollary}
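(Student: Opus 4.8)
The plan is to deduce the corollary from Theorem \ref{th2} by simply inverting the map $Q\mapsto\ld(Q)$. First I would collect the three facts established there: by part (1), $\ld(Q)$ is continuous and strictly increasing on $(0,+\infty)$; by part (2), $\ld(Q)\ra\underline\ld$ as $Q\ra0$; and by part (3), since $\f{\ld(Q)}{Q^2}\ra\kappa\in(0,+\infty)$, necessarily $\ld(Q)\ra+\infty$ as $Q\ra+\infty$. Therefore $\ld(\cdot)$ is a strictly increasing continuous bijection of $(0,+\infty)$ onto $(\underline\ld,+\infty)$, and its inverse $Q=Q(\ld)$ is well defined, continuous and strictly increasing on $(\underline\ld,+\infty)$. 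In particular, for any prescribed $\ld\in(\underline\ld,+\infty)$ there is exactly one $Q=Q(\ld)>0$ with $\ld(Q)=\ld$, which is the value whose existence and uniqueness the corollary asserts.

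Next I would settle the existence and uniqueness of the solution of problem 2. For existence, given $\ld>\underline\ld$ set $Q=Q(\ld)$ and take $(\psi_Q,\Gamma_Q)$ to be the unique solution of problem 1 with flux $Q$ supplied by Theorem \ref{ld1}. Since problems 1 and 2 demand exactly the same conditions (1)--(7) of Definition \ref{def1}, and the jump constant of $(\psi_Q,\Gamma_Q)$ equals $\ld(Q)=\ld$ by construction, this pair solves problem 2. For uniqueness I would argue that any solution $(\t\psi,\t\Gamma)$ of problem 2 with datum $\ld$ carries, through the Dirichlet condition \eqref{a0} on $N_2\cup S_2$, a well-defined flux $Q'>0$; hence it is a solution of problem 1 for $Q'$, whose jump constant must be $\ld(Q')$. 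The prescribed jump forces $\ld(Q')=\ld$, injectivity of $\ld(\cdot)$ gives $Q'=Q(\ld)$, and then the uniqueness in Theorem \ref{ld1} identifies $(\t\psi,\t\Gamma)$ with $(\psi_Q,\Gamma_Q)$.

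The two limits in part (1) are read off from the inverse. The relation $\lim_{Q\ra0}\ld(Q)=\underline\ld$ together with the monotone continuity of $Q(\cdot)$ yields $Q(\underline\ld+0)=\lim_{\ld\ra\underline\ld^+}Q(\ld)=0$. For the behavior at infinity, the equivalence $Q\ra+\infty\Leftrightarrow\ld\ra+\infty$ lets me rewrite $\f{\ld(Q)}{Q^2}\ra\kappa$ as $\f{\ld}{Q^2(\ld)}\ra\kappa$, whence $\f{Q^2(\ld)}{\ld}\ra\f1\kappa$ as $\ld\ra+\infty$. For part (2), the value $Q(0)$ corresponds to the harmonic case $\ld=0$, and I would invoke the conformal-mapping computation already carried out in Case 1 of the proof of Theorem \ref{th2}, i.e. the algebraic relation \eqref{d10}; since its right-hand side is strictly monotone in $Q$, it determines $Q(0)$ uniquely.

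The analytic content is entirely supplied by Theorem \ref{th2}, so the argument is largely bookkeeping. The one point that genuinely needs care is the uniqueness step for problem 2: one must verify that an arbitrary solution of problem 2 actually determines its own flux $Q'$ through the boundary data and so reduces to a solution of problem 1, for only then can the bijectivity of $\ld(\cdot)$ be brought to bear to conclude $Q'=Q(\ld)$ and thus uniqueness of $(\psi_Q,\Gamma_Q)$.
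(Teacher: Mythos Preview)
Your proposal is correct and matches the paper's approach: the paper itself offers no detailed proof of this corollary, simply stating that it follows from Theorem \ref{th2}, and your argument is precisely the natural inversion of the map $Q\mapsto\ld(Q)$ that this entails. Your identification of \eqref{d10} as the source of part (2) is exactly right, and your care about the uniqueness step---recovering the flux from the boundary data so that an arbitrary solution of problem 2 becomes a solution of problem 1---is the only nontrivial point and is handled correctly.
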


Hence, Theorem \ref{th3} follows from Corollary \ref{ld3}
immediately.

\

 {\bf Conflict of interest. } The authors declare that they have no
conflict of interest.

\

{\bf Acknowledgments.} The authors would like to thank the referees
for their helpful suggestions and careful reading which improve this
paper. This work was done while the first author was visiting The
Institute of Mathematical Science at The Chinese University of Hong
Kong, and he would like to thank Professor Zhouping Xin for warm
hospitality and many helpful discussions.

\

\bibliographystyle{plain}

\end{document}